\providecommand{\U}[1]{\protect\rule{.1in}{.1in}}
\newtheorem{theorem}{Theorem}
\newtheorem{definition}[theorem]{Definition}
\newtheorem{lemma}[theorem]{Lemma}
\newtheorem{notation}[theorem]{Notation}
\newtheorem{proposition}[theorem]{Proposition}
\newtheorem{remark}[theorem]{Remark}
\newenvironment{proof}[1][Proof]{\noindent\textbf{#1.} }{\ \rule{0.5em}{0.5em}}
\numberwithin{equation}{section}
\numberwithin{theorem}{section}
\newcommand{\Img}{\operatorname{Im}}
\newcommand{\I}{\operatorname{i}}
\begin{document}

\title{The Heterogeneous {H}elmholtz Problem with Spherical Symmetry: Green's
Operator and Stability Estimates}
\author{Stefan Sauter\thanks{(stas@math.uzh.ch), Institut f\"{u}r Mathematik,
Universit\"{a}t Z\"{u}rich, Winterthurerstr 190, CH-8057 Z\"{u}rich,
Switzerland}
\and C\'eline Torres \thanks{(celine.torres@math.uzh.ch), Institut f\"{u}r
Mathematik, Universit\"{a}t Z\"{u}rich, Winterthurerstr 190, CH-8057
Z\"{u}rich, Switzerland}}
\maketitle

\begin{abstract}
We study wave propagation phenomena modelled in the frequency domain by the
Helmholtz equation in heterogeneous media with focus on media with
discontinuous, highly oscillating wave speed. We restrict to problems with
spherical symmetry and will derive explicit representations of the Green's
operator and stability estimates which are explicit in the frequency and the
wave speed.

\end{abstract}

\section{Introduction}

High-frequency scattering problems have many important applications which
include, e.g., radar and sonar detection, medical and seismic imaging as well
as applications in nano photonics and lasers. In physics, such problems are
studied intensively in the context of wave scattering in disordered media and
localisation of waves with the goal to design waves with prescribed intensity,
interference, localized foci, parity-time symmetry, etc.; see, e.g.,
\cite{Anderson1958}, \cite{LAGENDIJK1996143}, \cite{sebbah2001waves},
\cite{lahini2008anderson}, \cite{makris2017wave},
\cite{transmission_matrix_dis_sys}, \cite{lowe1995matrix} for references to
the theoretical and experimental physics literature.

In the frequency domain, these problems are often modelled by the Helmholtz
equation with, possibly, large wave number. For heterogeneous media, the
coefficients in these equation become variable and we focus here on the
effects of variable wave speed. In general, wave propagation in heterogeneous
media can exhibit interference phenomena in the form of, e.g., the
localisation of waves or energies which grow exponentially with respect to the
frequency. These events are rare in the set of all parameter configurations
-- however, their existence and their complicated behaviour make the analysis
notoriously hard.

In this paper, we discuss the Helmholtz problem on a bounded, Lipschitz domain
$\Omega\subset\mathbb{R}^{d}$%
\begin{equation}%
\begin{split}
-\Delta u-\left(  \frac{\omega}{c}\right)  ^{2}u  &  =f\quad\text{in }\Omega,\\
\mathcal{B}u  &  =g\quad\text{on }\Gamma:=\partial\Omega,
\end{split}
\label{eq:contHelmholtz}%
\end{equation}
for given right-hand side $f$ and boundary data $g$ and for a suitable first
order boundary differential operator $\mathcal{B}$ which guarantees existence
and uniqueness. We assume that the constant frequency satisfies%
\begin{equation}
\omega\geq\omega_{0}>0 \label{defomega} 
\end{equation}
and that the variable wave speed $c\in L^{\infty}(\Omega)$ is positive and
bounded: $0<c_{\min}\leq c\leq c_{\max}$ for some $c_{\min},c_{\max}>0$.

We define the \textquotedblleft energy space\textquotedblright{} by
$\mathcal{H}:=H^{1}(\Omega)$ equipped with the norm
\[
\Vert u\Vert_{\mathcal{H}}:=\sqrt{\left\Vert \nabla u\right\Vert
^{2}+\left\Vert \frac{\omega}{c}u\right\Vert ^{2}},
\]
where $\Vert\cdot\Vert$ denotes the standard $L^{2}$ norm on the domain
$\Omega$ induced by the $L^{2}$-scalar product denoted by $\left(  \cdot
,\cdot\right)  $ with the convention that complex conjugation is on the second
argument. Also we write $\left(  \cdot,\cdot\right)  _{\Gamma}:=\left(
\cdot,\cdot\right)  _{L^{2}(\Gamma)}$.

A stability estimate for the solution operator of the form (if available)%
\[
\Vert u\Vert_{\mathcal{H}}\leq C_{\operatorname{stab}}\left(  \Vert
f\Vert+\Vert g\Vert_{H^{1/2}(\Gamma)}\right)
\]
plays an important role in the design of numerical methods to approximate the
solution of \eqref{eq:contHelmholtz}. In \cite{GrahamSauter2017} the numerical
discretisation of heterogeneous Helmholtz equation is studied -- in
particular, a resolution condition is derived for abstract Galerkin
discretisations which explicitly depends on the stability constant
$C_{\text{stab}}$. Therefore the dependency of $C_{\text{stab}}$ on all the
parameters given in the problem (e.g. frequency $\omega$ and wave speed $c$)
is crucial for the design of a numerical discretisation.

In this work, we will present a fully explicit analysis of the stability of
the high frequency Helmholtz problem \eqref{eq:contHelmholtz} in a setting
with spherical symmetry.

\paragraph{Literature overview:}

First rigorous stability results for the heterogeneous Helmholtz equation go
back to Aziz et al. \cite{AKS1988}, where the $1$-dimensional problem with a
wave speed $c\in C^{1}(\Omega)$ is considered. For higher dimensions, similar
results are proved in \cite{PerthameVega1999} and \cite{GrahamSauter2017}, for
a wave speed that is assumed to be slowly varying. In \cite{Bellassoued2003}
the full-space problem with one $C^{\infty}$ inclusion and a discontinuity of
the wave speed $c$ across the interface is discussed. It is shown that the
stability constant in this setting cannot grow faster than exponential in the
frequency $\omega$. Transmission problems with one inclusion are also
considered in \cite{PopovVodev1999resonances} and \cite{CardosoPopovVodev1999}
(for a convex, $C^{\infty}$ obstacle), where conditions on $c$ are proposed
such the stability constant can grow super-algebraic in the frequency $\omega$
or such that the problem is stable independent of the frequency $\omega$. A
similar stability result is presented in \cite{MoiolaSpence_resonance_2017}
for a Lipschitz, star-shaped obstacle, together with a coefficient explicit
estimate on the stability constant. If the obstacle is a ball, an analysis of
the stability is carried out in \cite{Capdeboscq2011} (for $d=2$) and
\cite{CapLeadPark2012} (for $d=3$), using estimates of Bessel and Hankel
functions. The stability of the 1-dimensional Helmholtz problem with piecewise
constant and possibly highly oscillating wave speed $c$ is analysed in the
thesis \cite{ChaumontFreletDiss}. In the worst case, the estimate on the
stability constant grows exponentially in the number of discontinuities of
$c$. In \cite{SauterTorres2018} it is shown, for a one-dimensional model
problem, that for a wave speed $c$ which oscillates between two values, the
stability constant can grow at most exponentially in the frequency $\omega$
and is independent of the number of discontinuities of $c$.

Our approach follows the same basic idea as in \cite{SauterTorres2018}: since
the wave speed is assumed to be piecewise constant and spherically symmetric
we employ a Fourier expansion in the spherical variables and end up with a
radial transmission problem. We derive an explicit representation of the
Green's operator which is key for studying its stability.

\paragraph{Outline and main achievements of the paper:}

In this paper we investigate a heterogeneous Helmholtz problem in a spherical
symmetric setting in general dimension $d\in\mathbb{N}_{\geq1}$. In order to
focus on phenomena induced by the \textit{differential operator} we have
chosen $f=0$ as the right-hand side and inhomogeneous Dirichlet-to-Neumann
(DtN) boundary conditions. In Section \ref{subsec:fullspaceproblem} we
introduce the model problem and the class of parameters under consideration.

We consider piecewise constant wave speed jumping at $n$ radial points and the
emphasis is that the number of jumps $n$ may be arbitrary large. In Section
\ref{subsec:fullspaceproblem}, we employ a Fourier ansatz where the Fourier
coefficients then only depend on the radial variable and satisfy an ordinary
differential equation (ODE) of Bessel-type in each interval where the wave
speed is constant. Interface conditions are imposed at the jump points and
boundary conditions are derived for the ODE. The resulting system of equations
can be represented as a linear system of dimension $2n$, whose solution is
determined by the radial Green's operator (i.e. the inverse matrix of the system).

In the literature, often restrictions are imposed on the wave speed $c$ as,
e.g., $c$ belongs to $C^{0,1}\left(  \Omega,\mathbb{R}\right)  $; the wave
speed in radial coordinate satisfies a certain monotonicity behaviour; the
number of jumps equals $1$; the domain $\Omega$ is the full space and the
coefficient $c$ is periodic; the wave speed is given as a \textit{small}
fluctuation around the globally constant case; the Helmholtz equation is
considered in a stochastic setting; or the problem is restricted to the
one-dimensional case $d=1$. In contrast, the focus in this paper is on wave
numbers which do not satisfy such restrictions and to consider general
dimensions $d$.

Our first main result is the derivation of a new representation of the radial
Green's operator for general dimension $d$ and arbitrary number $n$ of jump
points. Since we restrict to a vanishing right-hand side $f=0$, only the last
column of the Green's operator is relevant. In Section \ref{sec:RepGreen} we
introduce the representation of the last column in the radial Green's operator%
\begin{equation}
\left\vert \left(  \mathbf{M}_{m}^{\operatorname{Green}}\right)  _{2\ell
,2n}\right\vert =\left\vert \frac{\operatorname{Im}\left(  \operatorname{e}%
^{\operatorname*{i}\frac{z_{\ell}}{c_{\ell+1}}}\beta_{m,\ell}\right)  }%
{\beta_{m,n}}\right\vert ,~\left\vert \left(  \mathbf{M}_{m}%
^{\operatorname{Green}}\right)  _{2\ell-1,2n}\right\vert =\left\vert
\frac{\beta_{m,\ell-1}}{\beta_{m,n}}\right\vert . \label{GreenFunRep}%
\end{equation}
Here, $m$ denotes the Fourier mode, $c_{\ell}$ are the constant values of the
wave speed and $z_{\ell}=\omega x_{\ell}$, where $x_{\ell}$ denotes the $\ell
$-th jump point in radial direction. The \textit{key} is the sequence $\left(
\beta_{m,\ell}\right)  _{\ell=1}^{n}\subset\mathbb{C}$ which satisfies a
simple linear recursion (see Remark \ref{RemLinRec}) and the analysis of the
Green's operator boils down to the investigation of this sequence. In
\cite{SauterTorres2018} for the one-dimensional case $d=1$, the Green's
operator has also been expressed by a recursive sequence in the complex plane.
However the recursion in \cite{SauterTorres2018} is more complicated via a
(rational) M\"{o}bius transform instead of the linear recursion in this paper.

The proof of representation \eqref{GreenFunRep} is technical and shifted to
Section \ref{sec:proofRep}. The representation suggests that the stability of
the original problem depends on the maximal growth/decay of $\left\vert
\beta_{m,\ell}\right\vert $ with respect to $\ell$ and we state in Section
\ref{subsec:mainstability} for the case $d=3$ and $m=0$ that the
maximum/minimum can grow/decay exponentially with respect to the frequency
$\omega$, in particular we prove
\[
|\beta_{0,\ell}|\leq\alpha^{\omega},\quad|\beta_{0,\ell}|\geq\alpha^{-\omega
}\quad\text{for some }\alpha>1
\]
which leads to a stability bound which is exponential with respect to $\omega
$. We present bounds for the energy norm $\left\Vert \cdot\right\Vert
_{\mathcal{H}}$ and also pointwise bounds. The proof of this main stability
result is postponed to Section \ref{sec:stabilitym0}.

In Section \ref{sec:examples}, we characterise different parameter configurations which lead either to a localisation effect in the solution or to a globally stable solution. In the first example, we fix $n=1$ and
recall from the literature how the choice of the frequency $\omega$ and the
Fourier mode $m$ may lead to a wave localisation along the single jump
interface of $c$ (also known as \textquotedblleft whispering gallery
modes\textquotedblright{}, see also \cite{CapLeadPark2012}). The other two
examples are new and show that the localisation effect can also occur for
$m=0$ if the number $n$ of jumps is \textquotedblleft in
resonance\textquotedblright\ with the frequency $\omega$. In both examples, we
consider the same wave speed $c$, they differ only on their choice of
frequency $\omega$. In the second example, we observe a localisation in the
centre of the domain, leading to an exponential growth with respect to
$\omega$ of the stability constant $C_{\operatorname{stab}}$ as $n\rightarrow
\infty$. This underlines the sharpness of our main stability result. In the
last example, although we also consider $n\rightarrow\infty$, the stability
constant $C_{\operatorname{stab}}$ stays bounded independently of $n$ and
$\omega$.

\section{Helmholtz problem with Spherical Symmetry\label{subsec:fullspaceproblem}}

In this section, we will specify the set of parameters (wave
speed/frequency/ boundary conditions) and introduce the spherical symmetric
setting which will be the basis of the Fourier expansion.

\subsection{The Helmholtz Problem with DtN Boundary
Conditions\label{Subsec:DtN}}

The Euclidean norm in $\mathbb{R}^{d}$ is denoted by $\left\vert
\cdot\right\vert $. We fix the domain $\Omega=B_{1}^{d}:=\{\mathbf{x}%
\in\mathbb{R}^{d}\mid\left\vert \mathbf{x}\right\vert <1\}$ and denote by
$\frac{\partial}{\partial n}$ the derivative in direction of the outward
normal vector. We set $f=0$ and the variable wave speed $c\in L^{\infty
}(\Omega)$ is assumed to be piecewise constant on annular regions. More
precisely, we assume that there are given points%
\begin{subequations}
\label{def:wavespeed}
 \begin{equation}
0=x_{0}<x_{1}<\ldots<x_{N}=1,  \label{defwavespeeda}%
\end{equation}
corresponding to intervals $\tau_{j}:=\left(  x_{j-1},x_{j}\right)  $ of
lengths $h_{j}:=x_{j}-x_{j-1}$ and positive numbers $c_{j}$ such that%
\begin{equation}
c\left(  \mathbf{x}\right)  =c_{j}\quad\quad\forall\mathbf{x}\in\Omega
\quad\text{with\quad}\left\vert \mathbf{x}\right\vert \in\tau_{j}\qquad
\forall1\leq j\leq N,  \label{defwavespeedb}%
\end{equation}%
\begin{equation}
0<c_{\min}\leq c\leq c_{\max}<\infty \label{defwavespeedc}%
\end{equation}
\end{subequations}
for some $c_{\min},c_{\max}\in\mathbb{R}_{>0}$. We consider the homogeneous
Helmholtz problem
\begin{equation}%
\begin{split}
-\Delta u-\left(  \frac{\omega}{c}\right)  ^{2}u  &  =0\quad\text{in }%
\Omega,\\
\frac{\partial u}{\partial n}-T_{\frac{\omega}{c_{N}}}u  &  =g\quad\text{on
}\Gamma
\end{split}
\label{modelproblemDtN}%
\end{equation}
with inhomogeneous Dirichlet-to-Neumann boundary conditions which are defined
as follows. Let $\Omega^{+}:=\mathbb{R}^{d}\backslash\overline{\Omega}$. It
can be shown that, for given $\tilde{g}\in H^{1/2}\left(  \Gamma\right)  $ and
$\kappa\in\mathbb{R}$, the problem:%

\[%
\begin{array}
[c]{l}%
\text{find }w\in H_{\operatorname*{loc}}^{1}\left(  \Omega^{+}\right)  \text{
such that}\\%
\begin{array}
[c]{rll}%
\left(  -\Delta-\kappa^{2}\right)  w & =0 & \text{in }\Omega^{+}\\
w & =\tilde{g} & \text{on }\partial\Omega\text{,}\\
\left\vert \left\langle \frac{\mathbf{x}}{\left\vert \mathbf{x}\right\vert
},\nabla w\left(  \mathbf{x}\right)  \right\rangle -\operatorname*{i}\kappa
w\left(  \mathbf{x}\right)  \right\vert  & =o\left(  \left\vert \mathbf{x}%
\right\vert ^{\frac{1-d}{2}}\right)  & \left\vert \mathbf{x}\right\vert
\rightarrow\infty
\end{array}
\end{array}
\]
has a unique weak solution. The mapping $\tilde{g}\mapsto w$ is called the
\textit{Steklov-Poincar\'{e} operator }and denoted by $S_{P}:H^{1/2}\left(
\partial\Omega\right)  \rightarrow H_{\operatorname*{loc}}^{1}\left(
\Omega^{+}\right)  $. The \textit{Dirichlet-to-Neumann map} is given by
$T_{\kappa}:=\gamma_{1}S_{P}:H^{1/2}\left(  \partial\Omega\right)  \rightarrow
H^{-1/2}\left(  \partial\Omega\right)  $, where $\gamma_{1}:=\partial
/\partial$\textbf{$n$} is the normal trace operator.

\begin{remark}
The heterogeneous Helmholtz problem \eqref{modelproblemDtN} is well-posed (cf.
\cite{Alessandrini2012,GrahamSauter2017,JerisonKenig85}).
\end{remark}

In this paper, we discuss the stability of problem \eqref{modelproblemDtN} for
$\Omega$ being the unit ball centred at the origin and for a wave speed $c$
that is piecewise constant on concentric layers. We will use an explicit
representation of the Green's operator to understand which parameter
configurations are well-behaved (i.e. where the stability constant is bounded
with respect to the frequency $\omega$) and which configuration lead to a
localisation effect (i.e. a stability constant that grows exponentially with
respect to $\omega$).

\subsection{Helmholtz Problem in Spherical Coordinates\label{subsec:spherical}%
}

For $d\geq2$, let $Y_{m,\mathbf{n}}$ denote the eigenfunctions (spherical
harmonics) of the (negative) Laplace-Beltrami operator $\Delta_{\Gamma}$ on
$\Gamma:= \partial\Omega$ (cf. \cite[\S 22]{Shubin2001})%
\begin{equation}
-\Delta_{\Gamma}Y_{m,\mathbf{n}}=\lambda_{m}Y_{m,\mathbf{n}},\quad
m\in\mathbb{N}_{0}\text{, }\mathbf{n}\in\iota_{m} \label{defiotam}%
\end{equation}
for some finite index set $\iota_{m}$ which corresponds to the multiplicity of
the eigenvalue $\lambda_{m}$. We assume that the eigenvalues $\lambda_{m}$ are
numbered such that%
\[
0=\lambda_{0}<\lambda_{1}<\ldots
\]
and the eigenfunctions form a orthonormal basis of $L^{2}\left(
\Gamma\right)  $. Explicitly it holds (cf. \cite[\S 22]{Shubin2001})
\begin{equation}
\lambda_{m}=m\left(  m+d-2\right)  \quad\text{with multiplicity }\sharp
\iota_{m}=\frac{2m+d-2}{m+d-2}\binom{m+d-2}{d-2}. \label{defiotamnumber}%
\end{equation}

We introduce spherical coordinates in $\mathbb{R}^{d}$ by $\mathbf{x}=r \boldsymbol{\xi}$,
for $r:=\left\vert \mathbf{x}\right\vert $ and $\boldsymbol{\xi}
:=\mathbf{x}/r\in\mathbb{S}_{d-1}$. This transformation is denoted by
$\mathbf{x}=\psi\left(  r,\boldsymbol{\xi}\right)  $ and for a 
function $w$ in Cartesian coordinates we write $\hat
{w}=w\circ\psi$. The Laplace operator in spherical coordinates is given by%
\[
\left(  \Delta v\right)  \circ\psi=\frac{1}{r^{d-1}}\partial_{r}\left(
r^{d-1}\partial_{r}\hat{v}\right)  +\frac{1}{r^{2}}\Delta_{\Gamma}\hat{v}.
\]

Next we transform the Helmholtz equation \eqref{modelproblemDtN} to spherical
coordinates and write $\hat{u}=u\circ\psi$ and $\hat{g}\left(  \boldsymbol{\xi
}\right)  =g\circ\psi\left(  1,\boldsymbol{\xi}\right)  $. We employ a Fourier
expansion to the boundary data%
\[
\hat{g}=\sum_{m\in\mathbb{N}_{0}}\sum_{\mathbf{n}\in\iota_{m}}\hat
{g}_{m,\mathbf{n}}Y_{m,\mathbf{n}}%
\]
and a similar expansion for the solution%
\begin{equation}
u\circ\psi\left(  r,\boldsymbol{\xi} \right)  =\hat{u}\left(  r,\boldsymbol{\xi}\right)  
=\sum_{m\in\mathbb{N}_{0}
}\sum_{\mathbf{n}\in\iota_{m}}\hat{u}_{m,\mathbf{n}}\left(  r\right)
Y_{m\mathbf{,n}}\left(  \boldsymbol{\xi}\right)  . \label{Ansatzu}%
\end{equation}
The ODE for the Fourier coefficients $\hat{u}_{m,\mathbf{n}}$ (more precisely
for the restrictions $\hat{u}_{m,\mathbf{n},j}:=\left.  \hat{u}_{m,\mathbf{n}%
}\right\vert _{\tau_{j}}$) reads for $r\in\tau_{j},$
\begin{equation}
-\frac{1}{r^{d-1}}\partial_{r}\left(  r^{d-1}\hat{u}_{m,\mathbf{n},j}^{\prime
}\left(  r\right)  \right)  +\left(  \frac{m\left(  m+d-2\right)  }{r^{2}%
}-\left(  \frac{\omega}{c_{j}}\right)  ^{2}\right)  \hat{u}_{m,\mathbf{n}%
,j}\left(  r\right)  =0. \label{locPDE}%
\end{equation}
Moreover we have the conditions at each interface
\begin{equation}
\left[  \hat{u}_{m,\mathbf{n}}\right]  _{x_{j}}=\left[  \hat{u}_{m,\mathbf{n}%
}^{\prime}\right]  _{x_{j}}=0,\qquad\qquad1\leq j\leq N-1, \label{locPDEjump}%
\end{equation}
where $\left[  f\right]  _{x}$ denotes the jump of $f$ at $x$.

\begin{remark}
[The case $d=1$]In one dimension, problem \eqref{modelproblemDtN} can be
written in a similar form. We set $\hat{u}\left(  r\right)  =u\left(
\left\vert x\right\vert \right)  $ and obtain for $\hat{u}_{j}:=
\hat{u}_{\mid_{\tau_{j}}}$ the ordinary differential equation%
\begin{equation}
-\hat{u}_{j}^{\prime\prime}-\left(  \frac{\omega}{c_{j}}\right)  ^{2}\hat
{u}_{j}=0\quad\text{in }\tau_{j} \label{ODE1D}%
\end{equation}
with jump conditions%
\[
\left[  \hat{u}\right]  _{x_{j}}=\left[  \hat{u}^{\prime}\right]  _{x_{j}%
}=0,\qquad\qquad1\leq j\leq N-1.
\]
This corresponds to \eqref{locPDE} for $d=1$ and $m=\mathbf{n}=0$ as well as
to \eqref{locPDEjump}. The function $u$ then is given by $u\left(  x\right)
=\hat{u}\left(  r\right)  Y_{0,0}\left(  \frac{x}{\left\vert x\right\vert
}\right)  $, where for $d=1$ we set $Y_{0,0}\left(  \pm1\right)  = 1$.
\end{remark}

We denote by

\begin{itemize}
\item $J_{\nu}$: the Bessel function of first kind and order $\nu$,

\item $j_{\nu}$: the spherical Bessel function of first kind and order $\nu$,

\item $H_{\nu}^{(1)}$: the Hankel function of first kind and order $\nu$,

\item $h_{\nu}^{(1)}$: the spherical Hankel function of first kind and order
$\nu$.
\end{itemize}

The fundamental system for the ODE \eqref{locPDE} are generated, for $d\geq2$,
by%
\[
f_{m,d,1}\left(  r\right)  :=\mathfrak{c}_{d}\frac{H_{_{m+\frac{d}{2}-1}%
}^{\left(  1\right)  }\left(  r\right)  }{r^{\frac{d}{2}-1}}\quad
\text{and\quad}f_{m,d,2}\left(  r\right)  :=\mathfrak{c}_{d}\frac
{J_{_{m+\frac{d}{2}-1}}\left(  r\right)  }{r^{\frac{d}{2}-1}}%
\]
and for the ODE \eqref{ODE1D} for $d=1$, by%
\[
f_{0,1,1}\left(  r\right)  :=\mathfrak{c}_{1}\operatorname*{e}%
\nolimits^{\operatorname*{i}r}\quad\text{and\quad}f_{0,d,2}\left(  r\right)
:=\mathfrak{c}_{1}\cos r,
\]
for some normalization constants $\mathfrak{c}_{d}>0$. For $d=1,2,3$, we set
\[
\mathfrak{c}_{1}:=1,\quad\mathfrak{c}_{2}:=1,\text{\quad}\mathfrak{c}%
_{3}:=\sqrt{\pi/2}%
\]
so that
\[%
\begin{array}
[c]{lll}%
f_{m,1,1}\left(  r\right)  :=\operatorname*{e}\nolimits^{\operatorname*{i}r} &
f_{m,d,2}\left(  r\right)  :=\cos r, & m = 0,\\
f_{m,2,1}\left(  r\right)  :=H_{m}^{\left(  1\right)  }\left(  r\right)  , &
f_{m,2,2}\left(  r\right)  :=J_{m}\left(  r\right)  , & m \in\mathbb{N}_{0}\\
f_{m,3,1}\left(  r\right)  :=h_{m}^{\left(  1\right)  }\left(  r\right)  , &
f_{m,3,2}\left(  r\right)  :=j_{m}\left(  r\right)  , & m \in\mathbb{N}_{0} .
\end{array}
\]

To include the case $d=1$ in the notation, we set $\mathcal{N}_{d}%
:=\mathbb{N}_{0}$ for $d\geq2$ and $\mathcal{N}_{1}:=\left\{  0\right\}  $.
Note that $\iota_{m}$ in \eqref{defiotam} depends on the dimension $d$ (cf.
\eqref{defiotamnumber}) and for $d=1$ we set $\iota_{0}:=\left\{  0\right\}
$. In this light, we write also short $\mathcal{N}$ for $\mathcal{N}_{d}$.
Also we set $\mathbb{S}_{0}=\{-1,1\}$ and for $d\geq2$ we denote by
$\mathbb{S}_{d-1}$ the unit sphere in $\mathbb{R}^{d}$.

The DtN boundary conditions in \eqref{modelproblemDtN} read (see, e.g.,
\cite[(3.7), (3.10), (3.25)]{MelenkSauterMathComp}):%
\[
\hat{u}_{m,\mathbf{n}}^{\prime}\left(  1\right)  -\frac{\omega}{c_{N}}%
\frac{f_{m,d,1}^{\prime}\left(  \frac{\omega}{c_{N}}\right)  }{f_{m,d,1}%
\left(  \frac{\omega}{c_{N}}\right)  }\hat{u}_{m,\mathbf{n}}\left(  1\right)
=\hat{g}_{m,\mathbf{n}}.
\]
Next, we impose an appropriate conditions at the origin which guarantees that
the solution $u$ in \eqref{Ansatzu} is smooth in at origin. We require for all
$\boldsymbol{\xi}\in\mathbb{S}_{d-1}$%
\begin{align*}
\lim_{r\searrow0}u\left(  r\boldsymbol{\xi}\right)   &  =\lim_{r\searrow0}u\left(  -r\boldsymbol{\xi}\right) \\
\lim_{r\searrow0}\frac{u\left(  r\boldsymbol{\xi}\right)  -u\left(  0\right)  }{r}  &  =\lim_{r\searrow0}\frac{u\left(
0\right)  -u\left(  -r\boldsymbol{\xi}\right)  }{r}%
\end{align*}
which is equivalent to%
\begin{align*}
\sum_{m\in\mathcal{N}}\sum_{\mathbf{n}\in\iota_{m}}\hat{u}_{m,\mathbf{n}%
}\left(  0\right)  Y_{m\mathbf{,n}}\left(  \boldsymbol{\xi}\right)   &
=\sum_{m\in\mathcal{N}}\sum_{\mathbf{n}\in\iota_{m}}\hat{u}_{m,\mathbf{n}%
}\left(  0\right)  Y_{m\mathbf{,n}}\left(  -\boldsymbol{\xi}\right) \\
\sum_{m\in\mathcal{N}}\sum_{\mathbf{n}\in\iota_{m}}\hat{u}_{m,\mathbf{n}%
}^{\prime}\left(  0\right)  Y_{m\mathbf{,n}}\left(  \boldsymbol{\xi}\right)
&  =-\sum_{m\in\mathcal{N}}\sum_{\mathbf{n}\in\iota_{m}}\hat{u}_{m,\mathbf{n}%
}^{\prime}\left(  0\right)  Y_{m\mathbf{,n}}\left(  -\boldsymbol{\xi}\right)
\end{align*}
We use that the spherical harmonics satisfy by definition (cf. e.g., \cite[p.
71]{Frye_spherical_harm})
\[
Y_{m,\mathbf{n}}\left(\boldsymbol{\xi}\right)  =\left(  -1\right)  ^{m}
Y_{m,\mathbf{n}}\left(  -\boldsymbol{\xi}\right)  .
\]
Hence,%
\begin{align*}
\hat{u}_{m,\mathbf{n}}\left(  0\right)   &  =\left(  -1\right)  ^{m}\hat
{u}_{m,\mathbf{n}}\left(  0\right) \\
\hat{u}_{m,\mathbf{n}}^{\prime}\left(  0\right)   &  =\left(  -1\right)
^{m+1}\hat{u}_{m,\mathbf{n}}^{\prime}\left(  0\right)  .
\end{align*}
This implies%
\begin{align*}
\hat{u}_{m,\mathbf{n}}\left(  0\right)   &  =0\quad\text{for odd }m\text{,}\\
\hat{u}_{m,\mathbf{n}}^{\prime}\left(  0\right)   &  =0\quad\text{for even }m
\end{align*}
and these are the boundary conditions at $r=0$.

In summary, for the considered radial symmetric case we study the following
system of ODEs:
\begin{equation}
-\frac{1}{r^{d-1}}\partial_{r}\left(  r^{d-1}\hat{u}_{m,\mathbf{n},j}^{\prime
}\left(  r\right)  \right)  +\left(  \frac{m\left(  m+d-2\right)  }{r^{2}%
}-\left(  \frac{\omega}{c_{j}}\right)  ^{2}\right)  \hat{u}_{m,\mathbf{n}%
,j}\left(  r\right)  =0, \label{SLV:homogeneoussystem}%
\end{equation}
for $r\in\tau_{j},1\leq j\leq N$, together with the interface conditions
\begin{equation}
\left[  \hat{u}_{m,\mathbf{n}}\right]  _{x_{j}}=\left[  \hat{u}_{m,\mathbf{n}%
}^{\prime}\right]  _{x_{j}}=0,\quad1\leq j\leq N-1, \label{eq:interfacecond}%
\end{equation}
and boundary conditions
\[%
\begin{split}
\hat{u}_{m,\mathbf{n}}^{\prime}\left(  1\right)  -\frac{\omega}{c_{N}}%
\frac{f_{m,d,1}^{\prime}\left(  \frac{\omega}{c_{N}}\right)  }{f_{m,d,1}%
\left(  \frac{\omega}{c_{N}}\right)  }\hat{u}_{m,\mathbf{n}}\left(  1\right)
=  &  \hat{g}_{m,\mathbf{n}},\\
\hat{u}_{m,\mathbf{n}}\left(  0\right)  =  &  0\quad\text{for odd }m\text{,}\\
\hat{u}_{m,\mathbf{n}}^{\prime}\left(  0\right)  =  &  0\quad\text{for even
}m.
\end{split}
\]
For the solution of the homogenous equation \eqref{SLV:homogeneoussystem} we
employ the ansatz for $m\in\mathcal{N}$ and $\mathbf{n}\in\iota_{m}$
\begin{equation}
\hat{u}_{m,\mathbf{n}}|_{\tau_{j}}=A_{\left(  m,\mathbf{n}\right)
,j}f_{m,d,1}\left(  \frac{\omega}{c_{j}}\cdot\right)  +B_{\left(
m,\mathbf{n}\right)  ,j}f_{m,d,2}\left(  \frac{\omega}{c_{j}}\cdot\right)  .
\label{ansatzontauj}%
\end{equation}
The boundary conditions at the origin and the boundary condition at $1$
imply\footnote{The Wronskian $\mathcal{W}\left(  \varphi_{1},\varphi
_{2}\right)  $ of two functions $\varphi_{1}$ and $\varphi_{2}$ is given by%
\[
\mathcal{W}\left(  \varphi_{1},\varphi_{2}\right)  \left(  z\right)
:=\varphi_{1}\left(  z\right)  \varphi_{2}^{\prime}\left(  z\right)
-\varphi_{1}^{\prime}\left(  z\right)  \varphi_{2}\left(  z\right)  .
\]
}
\begin{subequations}
\label{eq:globboundarycond}%
\begin{align}
A_{\left(  m,\mathbf{n}\right)  ,1}  &  =0\label{Amn1}\\
B_{\left(  m,\mathbf{n}\right)  ,N}  &  =\frac{f_{m,d,1}\left(  \kappa
_{N,N}\right)  }{\kappa_{N,N}\mathcal{W}\left(  f_{m,d,1},f_{m,2}\right)
\left(  \kappa_{N,N}\right)  }\hat{g}_{m,\mathbf{n}} \label{BmnN}%
\end{align}
with
\end{subequations}
\[
z_{\ell}:=\omega x_{\ell}\quad\text{and\quad}\kappa_{j,\ell}:=z_{\ell}/c_{j}.
\]

The interface conditions \eqref{eq:interfacecond} can be rewritten as a system
of linear equation given by
\[%
\begin{split}
&  A_{\left(  m,\mathbf{n}\right)  ,\ell}f_{m,d,1}\left(  \kappa_{\ell,\ell
}\right)  +B_{\left(  m,\mathbf{n}\right)  ,\ell}f_{m,d,2}\left(  \kappa
_{\ell,\ell}\right)  =\\
&  A_{\left(  m,\mathbf{n}\right)  ,\ell+1}f_{m,d,1}\left(  \kappa
_{\ell+1,\ell}\right)  +B_{\left(  m,\mathbf{n}\right)  ,\ell+1}%
f_{m,d,2}\left(  \kappa_{\ell+1,\ell}\right)  ,\\
&  \frac{A_{\left(  m,\mathbf{n}\right)  ,\ell}}{c_{\ell}}f_{m,d,1}^{\prime
}\left(  \kappa_{\ell,\ell}\right)  +\frac{B_{\left(  m,\mathbf{n}\right)
,\ell}}{c_{\ell}}f_{m,d,2}^{\prime}\left(  \kappa_{\ell,\ell}\right)  =\\
&  \frac{A_{\left(  m,\mathbf{n}\right)  ,\ell+1}}{c_{\ell+1}}f_{m,d,1}%
^{\prime}\left(  \kappa_{\ell+1,\ell}\right)  +\frac{B_{\left(  m,\mathbf{n}%
\right)  ,\ell+1}}{c_{\ell+1}}f_{m,d,2}^{\prime}\left(  \kappa_{\ell+1,\ell
}\right)  ,
\end{split}
\]
for $1\leq\ell\leq N-1$, whose solution is given by
\begin{equation}
\mathbf{x}_{m,\mathbf{n}}=\mathbf{M}_{m,\mathbf{n}}^{\text{Green}}%
\mathbf{r}_{m,\mathbf{n}}, \label{Greenrep}%
\end{equation}
where
\[
\mathbf{x}_{m,\mathbf{n}}=\left(  B_{\left(  m,\mathbf{n}\right)
,1},A_{\left(  m,\mathbf{n}\right)  ,1},B_{\left(  m,\mathbf{n}\right)
,2},\cdots,A_{\left(  m,\mathbf{n}\right)  ,n-1},B_{\left(  m,\mathbf{n}%
\right)  ,N-1},A_{\left(  m,\mathbf{n}\right)  ,N}\right)  ^{\intercal},
\]
supplemented by $A_{\left(  m,\mathbf{n}\right)  ,1},~B_{\left(
m,\mathbf{n}\right)  ,N}$ as in \eqref{eq:globboundarycond} contains the
coefficients in \eqref{ansatzontauj} and $\mathbf{M}_{m,\mathbf{n}%
}^{\text{Green}}\in\mathbb{C}^{2n\times2n}$, $\mathbf{r}_{m,\mathbf{n}}%
\in\mathbb{C}^{2n}$. The precise definition of the linear system corresponding
to \eqref{Greenrep} is given in \eqref{fullinteriorsysfinal}.

\section{Representation of the Green's Operator and Stability Estimate\label{sec:RepGreen}}

In this section, we introduce a representation of the radial Green's operator
and formulate the main stability estimate.

\subsection{The Radial Green's Operator}

In this section, we rewrite the linear system \eqref{eq:interfacecond} in a
suitable way and find a representation of the entries of the Green's operator
$\mathbf{M}_{m,\mathbf{n}}^{\text{Green}}$.

\begin{notation}
In the following, we skip the indices $\mathbf{n}$ and $d$ and write short
$f_{m,\ell}$ for $f_{m,d,\ell}$ and $A_{m,j}$ for $A_{\left(  m,\mathbf{n}%
\right)  ,j}$ and similar for other quantities. Using the definition of the
Wronskian $\mathcal{W}(\cdot,\cdot)$, we define
\begin{align*}
w_{m,j,k,\ell}^{p,q}:=  &  \mathcal{W}\left(  f_{m,p}\left(  \frac{\cdot
}{c_{j}}\right)  ,f_{m,q}\left(  \frac{\cdot}{c_{k}}\right)  \right)  \left(
z_{\ell}\right) \\
=  &  \frac{f_{m,p}\left(  \kappa_{j,\ell}\right)  f_{m,q}^{\prime}\left(
\kappa_{k,\ell}\right)  }{c_{k}}-\frac{f_{m,p}^{\prime}\left(  \kappa_{j,\ell
}\right)  f_{m,q}\left(  \kappa_{k,\ell}\right)  }{c_{j}}.
\end{align*}

\end{notation}

The interface conditions \eqref{eq:interfacecond} together with
\eqref{eq:globboundarycond} give%
\begin{equation}
\left[
\begin{array}
[c]{ccccc}%
\mathbf{S}_{m}^{\left(  1\right)  } & \mathbf{T}_{m}^{\left(  1\right)  } &
\mathbf{0} & \dots & \mathbf{0}\\
\mathbf{R}_{m}^{\left(  1\right)  } & \mathbf{S}_{m}^{\left(  2\right)  } &
\ddots & \ddots & \vdots\\
\mathbf{0} & \ddots & \ddots &  & \mathbf{0}\\
\vdots & \ddots &  &  & \mathbf{T}_{m}^{\left(  n-1\right)  }\\
\mathbf{0} & \dots & \mathbf{0} & \mathbf{R}_{m}^{\left(  n-1\right)  } &
\mathbf{S}_{m}^{\left(  n\right)  }%
\end{array}
\right]  \left(
\begin{array}
[c]{c}%
B_{1}\\
\vdots\\
A_{n}\\
B_{n}\\
A_{n+1}%
\end{array}
\right)  =C\left(
\begin{array}
[c]{c}%
0\\
\vdots\\
0\\
f_{m,2}\left(  \kappa_{n+1,n}\right) \\
\frac{f_{m,2}^{\prime}\left(  \kappa_{n+1,n}\right)  }{c_{n+1}}%
\end{array}
\right)  \label{matSRT}%
\end{equation}
with%
\[
n:=N-1,\qquad C:=\frac{f_{m,1}\left(  \kappa_{n+1,n+1}\right)  g_{m}}%
{\kappa_{n+1,n+1}\mathcal{W}\left(  f_{m,1},f_{m,2}\right)  \left(
\kappa_{n+1,n+1}\right)  }%
\]
and
\begin{align*}
\mathbf{R}_{m}^{\left(  \ell\right)  }  &  =\left[  \mathbf{0}\mid
\mathbf{r}_{m}^{\left(  \ell\right)  }\right]  =\left[
\begin{array}
[c]{ll}%
0 & f_{m,1}\left(  \kappa_{\ell+1,\ell+1}\right) \\
0 & \frac{f_{m,1}^{\prime}\left(  \kappa_{\ell+1,\ell+1}\right)  }{c_{\ell+1}}%
\end{array}
\right]  \quad1\leq\ell\leq n-1,\\
\mathbf{S}_{m}^{\left(  \ell\right)  }  &  =\left[  \mathbf{s}_{m,1}^{\left(
\ell\right)  }\mid\mathbf{s}_{m,2}^{\left(  \ell\right)  }\right]  =\left[
\begin{array}
[c]{ll}%
f_{m,2}\left(  \kappa_{\ell,\ell}\right)  & -f_{m,1}\left(  \kappa
_{\ell+1,\ell}\right) \\
\frac{f_{m,2}^{\prime}\left(  \kappa_{\ell,\ell}\right)  }{c_{\ell}} &
-\frac{f_{m,1}^{\prime}\left(  \kappa_{\ell+1,\ell}\right)  }{c_{\ell+1}}%
\end{array}
\right]  \quad1\leq\ell\leq n,\\
\mathbf{T}_{m}^{\left(  \ell\right)  }  &  =\left[  \mathbf{t}_{m}^{\left(
\ell\right)  }\mid\mathbf{0}\right]  =\left[
\begin{array}
[c]{ll}%
-f_{m,2}\left(  \kappa_{\ell+1,\ell}\right)  & 0\\
-\frac{f_{m,2}^{\prime}\left(  \kappa_{\ell+1,\ell}\right)  }{c_{\ell+1}} & 0
\end{array}
\right]  \quad1\leq\ell\leq n-1.
\end{align*}

Next we transform this system to a tri-diagonal system and first introduce
some notation. We define the block-diagonal matrix%
\[
\mathbf{D}_{m}:=\operatorname{diag}\left[  \mathbf{D}_{m}^{\left(
\ell\right)  }:1\leq\ell\leq n\right]  ,
\]
for\footnote{We set $%
\begin{pmatrix}
a\\
b
\end{pmatrix}
^{\perp}:=(b,-a)$.}%
\[
\mathbf{D}_{m}^{\left(  \ell\right)  }=\frac{1}{w_{m,\ell+1,\ell,\ell}^{2,1}%
}\left[
\begin{array}
[c]{r}%
\left(  \mathbf{t}_{m}^{\ell}\right)  ^{\perp}\\
\left(  \mathbf{r}_{m}^{\ell-1}\right)  ^{\perp}%
\end{array}
\right]  ,\qquad1\leq\ell\leq n,
\]
and multiply \eqref{matSRT} from left with $\mathbf{D}_{m}$ to obtain%
\begin{equation}
\mathbf{\hat{M}}_{m}^{\left(  2n\right)  }\left(
\begin{array}
[c]{c}%
B_{1}\\
\vdots\\
A_{N-1}\\
B_{N-1}\\
A_{N}%
\end{array}
\right)  =\frac{f_{m,1}\left(  \kappa_{N,N}\right)  }{\omega w_{m,N,N,N}%
^{1,2}}\left(
\begin{array}
[c]{c}%
0\\
\vdots\\
0\\
g_{m}%
\end{array}
\right)  \label{fullinteriorsysfinal}%
\end{equation}
with%
\begin{equation}
\mathbf{\hat{M}}_{m}^{\left(  2n\right)  }:=\left[
\begin{array}
[c]{ccccc}%
\mathbf{\hat{S}}_{m}^{\left(  1\right)  } & \mathbf{\hat{T}}_{m}^{\left(
1\right)  } & \mathbf{0} & \dots & \mathbf{0}\\
\mathbf{\hat{R}}_{m}^{\left(  1\right)  } & \mathbf{\hat{S}}_{m}^{\left(
2\right)  } & \ddots & \ddots & \vdots\\
\mathbf{0} & \ddots & \ddots &  & \mathbf{0}\\
\vdots & \ddots &  &  & \mathbf{\hat{T}}_{m}^{\left(  n-1\right)  }\\
\mathbf{0} & \dots & \mathbf{0} & \mathbf{\hat{R}}_{m}^{\left(  n-1\right)  }
& \mathbf{\hat{S}}_{m}^{\left(  n\right)  }%
\end{array}
\right]  \label{defMhat2n}%
\end{equation}
and%
\begin{align*}
\mathbf{\hat{R}}_{m}^{\left(  \ell\right)  }  &  :=\mathbf{D}_{m}^{\left(
\ell+1\right)  }\mathbf{R}_{m}^{\left(  \ell\right)  }=\left[
\begin{array}
[c]{rr}%
0 & 1\\
0 & 0
\end{array}
\right]  ,\\
\mathbf{\hat{S}}_{m}^{\left(  \ell\right)  }  &  :=\mathbf{D}_{m}^{\left(
\ell\right)  }\mathbf{S}_{m}^{\left(  \ell\right)  }=\frac{1}{w_{m,\ell
+1,\ell,\ell}^{2,1}}\left[
\begin{array}
[c]{rr}%
w_{m,\ell+1,\ell,\ell}^{2,2} & w_{m,\ell+1,\ell+1,\ell}^{1,2}\\
-w_{m,\ell,\ell,\ell}^{1,2} & w_{m,\ell,\ell+1,\ell}^{1,1}%
\end{array}
\right]  ,\\
\mathbf{\hat{T}}_{m}^{\left(  \ell\right)  }  &  :=\mathbf{D}_{m}^{\left(
\ell\right)  }\mathbf{T}_{m}^{\left(  \ell\right)  }=\left[
\begin{array}
[c]{rr}%
0 & 0\\
-1 & 0
\end{array}
\right]  .
\end{align*}

For later use, we define the matrix $\mathbf{M}^{\left(  \ell\right)  }$ for
odd $\ell\ $by removing the last row and column in $\mathbf{M}^{\left(
\ell+1\right)  }$.

The Green's operator is given by $\mathbf{M}_{m}^{\text{Green}}:=\left(
\mathbf{\hat{M}}_{m}^{\left(  2n\right)  }\right)  ^{-1}$ and the right-hand
side is defined by $\mathbf{r}=\left(  0,\dots,0,\frac{f_{m,1}\left(
\kappa_{N,N}\right)  }{\omega w_{m,N,N,N}^{1,2}}g_{m}\right)  $. Note that
\[
\mathcal{W}\left(  f_{m,d,1},f_{m,d,2}\right)  \left(  r\right)
=\left\{
\begin{array}
[c]{cc}%
-\operatorname*{i} & d=1,\\
-\operatorname*{i}\frac{2}{\pi}\mathfrak{c}_{d}^{2}\frac{1}{r^{d-1}} & d\geq2.
\end{array}
\right.
\]
so that%
\begin{equation}
w_{m,N,N,N}^{1,2}=-\frac{\operatorname*{i}}{c_{N}}\left(  \frac{c_{N}}{\omega
}\right)  ^{d-1}\times\left\{
\begin{array}
[c]{cc}%
1 & d=1,\\
\frac{2}{\pi}\mathfrak{c}_{d}^{2} & d\geq2.
\end{array}
\right.  \label{wmNNNcomput}%
\end{equation}
We use Cramer's rule to represent the entries of the Green's operator and
derive a recursive expression for the arising quotient of determinants (see
Theorem \ref{th:final_rep}). For $1\leq\ell\leq n=N-1$, we define the
quantities%

\begin{subequations}
\label{def:gammaqnew}
\begin{align}
\tilde{\gamma}_{m,\ell}^{+}  &  :=\frac{f_{m,1}\left(  \kappa_{\ell+1,\ell
}\right)  \overline{f_{m,1}^{\prime}\left(  \kappa_{\ell,\ell}\right)  }%
}{c_{\ell}}-\frac{f_{m,1}^{\prime}\left(  \kappa_{\ell+1,\ell}\right)
\overline{f_{m,1}\left(  \kappa_{\ell,\ell}\right)  }}{c_{\ell+1}}, \label{def:gammaqnewa}\\
\tilde{\gamma}_{m,\ell}^{-}  &  :=\frac{f_{m,1}^{\prime}\left(  \kappa
_{\ell,\ell}\right)  f_{m,1}\left(  \kappa_{\ell+1,\ell}\right)  }{c_{\ell}%
}-\frac{f_{m,1}^{\prime}\left(  \kappa_{\ell+1,\ell}\right)  f_{m,1}\left(
\kappa_{\ell,\ell}\right)  }{c_{\ell+1}}, \label{def:gammaqnewb}\\
\tilde{q}_{m,\ell}  &  :=\frac{\tilde{\gamma}_{m,\ell}^{-}}{\tilde{\gamma
}_{m,\ell}^{+}}. \label{def:gammaqq}%
\end{align}
\end{subequations}

We note that \eqref{def:gammaqq} is well defined since $\tilde{\gamma}%
_{m,\ell}^{+}\neq0$ (cf. proof of \cite[Lemma 1]{HansenPoignardVogelius2007}).
We also remark that the fundamental solution $f_{m,1}(x)$ scales with the
prefactor $e^{\operatorname*{i}x}$ as $x\rightarrow\infty$ (cf.
\cite{NIST:DLMF}). Therefore, it is natural to introduce
\[
\gamma_{m,\ell}^{\pm}:=\operatorname*{i}\operatorname{e}^{\operatorname*{i}%
\left(  \pm\frac{z_{\ell}}{c_{\ell}}-\frac{z_{\ell}}{c_{\ell+1}}\right)
}\tilde{\gamma}_{m,\ell}^{\pm},\qquad q_{m,\ell}:=\frac{\gamma_{m,\ell}^{-}%
}{\gamma_{m,\ell}^{+}}%
\]
and the sequence $\beta_{m,\ell}$ for $0\leq\ell\leq n$ recursively by%
\begin{equation}%
\begin{split}
\beta_{m,0}  &  :=1,\\
\beta_{m,\ell}  &  :=\frac{\gamma_{m,\ell}^{+}}{2\operatorname*{i}%
w_{m,\ell+1,\ell+1,\ell}^{1,2}}\left(  \operatorname{e}^{-\operatorname*{i}%
\frac{\omega h_{\ell}}{c_{\ell}}}\beta_{m,\ell-1}+q_{m,\ell}\overline
{\operatorname{e}^{-\operatorname*{i}\omega\frac{h_{\ell}}{c_{\ell}}}%
\beta_{m,\ell-1}}\right)  .
\end{split}
\label{recursioncheck}%
\end{equation}

\begin{remark}
\label{RemLinRec}Recursion \eqref{recursioncheck} can be written as a linear
recursion for the real and imaginary part of $\beta_{m,\ell}=\beta_{m,\ell
}^{R}+\operatorname*{i}\beta_{m,\ell}^{\operatorname*{I}}$. We set%
\begin{align*}
\theta_{m,\ell}  &  :=\frac{\gamma_{m,\ell}^{+}}{2\operatorname*{i}%
w_{m,\ell+1,\ell+1,\ell}^{1,2}}\operatorname{e}^{-\operatorname*{i}%
\frac{\omega h_{\ell}}{c_{\ell}}}=:\theta_{m,\ell}^{R}+\operatorname*{i}%
\theta_{m,\ell}^{I}\\
\phi_{m,\ell}  &  :=\frac{\gamma_{m,\ell}^{+}}{2\operatorname*{i}%
w_{m,\ell+1,\ell+1,\ell}^{1,2}}q_{m,\ell}\operatorname{e}^{\operatorname*{i}%
\omega\frac{h_{\ell}}{c_{\ell}}}=:\phi_{m,\ell}^{R}+\operatorname*{i}%
\phi_{m,\ell}^{I}%
\end{align*}
and obtain $\left(  \beta_{m,0}^{R},\beta_{m,0}^{I}\right)  =\left(
1,0\right)  $ and
\[
\left(
\begin{array}
[c]{c}%
\beta_{m,\ell}^{R}\\
\beta_{m,\ell}^{I}%
\end{array}
\right)  =\left[
\begin{array}
[c]{rr}%
\theta_{m,\ell}^{R}+\phi_{m,\ell}^{R} & \phi_{m,\ell}^{I}-\theta_{m,\ell}%
^{I}\\
\theta_{m,\ell}^{I}+\phi_{m,\ell}^{I} & \theta_{m,\ell}^{R}-\phi_{m,\ell}^{R}%
\end{array}
\right]  \left(
\begin{array}
[c]{c}%
\beta_{m,\ell-1}^{R}\\
\beta_{m,\ell-1}^{I}%
\end{array}
\right)
\]

\end{remark}

\begin{remark}
\label{rmk:wronskian} The Wronskian $w_{m,\ell+1,\ell+1,\ell}^{1,2}$ is
independent of $m$, but depends on the dimension $d$. Moreover it holds for
$1\leq\ell\leq n$%
\[
\operatorname*{i}w_{m,\ell+1,\ell+1,\ell}^{1,2}=\frac{c_{\ell+1}^{d-2}%
}{z_{\ell}^{d-1}}\times\left\{
\begin{array}
[c]{ll}%
1 & d=1\\
\frac{2}{\pi}\mathfrak{c}_{d}^{2} & d\geq2
\end{array}
\right.
\]
for all dimensions $d$. In particular, $\operatorname*{i}w_{m,\ell
+1,\ell+1,\ell}^{1,2}\in\mathbb{R}_{>0}$.
\end{remark}

\begin{theorem}
\label{th:final_rep} Let $\beta_{m,n,\ell}$ be recursively defined as in
\eqref{recursioncheck}. Then the entries of the Green's operator are given by%
\begin{equation}%
\begin{split}
\left(  \mathbf{M}_{m}^{\operatorname{Green}}\right)  _{2\ell,2n}  &
=-\left(  \frac{\operatorname{Im}\left(  \operatorname{e}^{\operatorname*{i}%
\frac{z_{\ell}}{c_{\ell+1}}}\beta_{m,\ell}\right)  }{\operatorname{e}%
^{\operatorname*{i}\frac{z_{n}}{c_{n+1}}}\beta_{m,n}}\right)  ,\\
\left(  \mathbf{M}_{m}^{\operatorname{Green}}\right)  _{2\ell-1,2n}  &
=-\left(  \frac{\operatorname{e}^{\operatorname*{i}\frac{z_{\ell-1}}{c_{\ell}%
}}\beta_{m,\ell-1}}{\operatorname{e}^{\operatorname*{i}\frac{z_{n}}{c_{n+1}}%
}\beta_{m,n}}\right)  ,
\end{split}
\label{eq:repMGreen}%
\end{equation}
for $1\leq\ell\leq n$.
\end{theorem}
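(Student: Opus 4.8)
The plan is to apply Cramer's rule to the tridiagonal system \eqref{fullinteriorsysfinal}. Since the right-hand side is $\mathbf r = \bigl(0,\dots,0,\tfrac{f_{m,1}(\kappa_{N,N})}{\omega w^{1,2}_{m,N,N,N}}g_m\bigr)$, only the last column of $\mathbf M^{\operatorname{Green}}_m = \bigl(\mathbf{\hat M}^{(2n)}_m\bigr)^{-1}$ matters, and its $j$-th entry is $\pm \det(\mathbf M^{(j-1)}_m)/\det(\mathbf{\hat M}^{(2n)}_m)$ where $\mathbf M^{(j-1)}_m$ is the leading principal submatrix obtained by deleting the last $2n-(j-1)$ rows and columns --- this is where the convention "for odd $\ell$ remove the last row and column of $\mathbf M^{(\ell+1)}$" introduced just before the theorem is used. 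So the whole statement reduces to computing the leading principal minors $\det\mathbf M^{(k)}_m$ for $1\le k\le 2n$ and identifying the ratios with the claimed expressions in $\beta_{m,\ell}$.

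The key step is a recursion for these minors. Because $\mathbf{\hat M}^{(2n)}_m$ is block-tridiagonal with the very simple off-diagonal blocks $\mathbf{\hat R}^{(\ell)}_m = \left[\begin{smallmatrix}0&1\\0&0\end{smallmatrix}\right]$ and $\mathbf{\hat T}^{(\ell)}_m = \left[\begin{smallmatrix}0&0\\-1&0\end{smallmatrix}\right]$, expanding $\det\mathbf M^{(k)}_m$ along its last row (or last two rows) collapses to a short scalar recursion: each new $2\times 2$ diagonal block $\mathbf{\hat S}^{(\ell)}_m$ contributes its entries $w^{2,2}_{m,\ell+1,\ell,\ell}$, $w^{1,2}_{m,\ell+1,\ell+1,\ell}$, $-w^{1,2}_{m,\ell,\ell,\ell}$, $w^{1,1}_{m,\ell,\ell+1,\ell}$ (all divided by $w^{2,1}_{m,\ell+1,\ell,\ell}$) and the sparsity of $\mathbf{\hat R},\mathbf{\hat T}$ kills most cross terms. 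I would carry this out by first writing $D_k := \det\mathbf M^{(2k)}_m$ and $E_k := \det\mathbf M^{(2k-1)}_m$, deriving a two-term recursion linking $(D_k,E_k)$ to $(D_{k-1},E_{k-1})$, and then checking by induction that $D_k$ equals (up to an explicit product of Wronskian prefactors and exponentials) $\operatorname{e}^{\operatorname{i} z_k/c_{k+1}}\beta_{m,k}$ while $E_k$ matches $\operatorname{e}^{\operatorname{i} z_{k-1}/c_k}\beta_{m,k-1}$, with the recursion for $\beta_{m,\ell}$ in \eqref{recursioncheck} being precisely the one that falls out. The translation between the raw Wronskian quantities $w^{p,q}_{m,j,k,\ell}$ and the normalized quantities $\gamma^\pm_{m,\ell}$, $q_{m,\ell}$, $\beta_{m,\ell}$ --- in particular extracting the exponential prefactors $\operatorname{e}^{\operatorname{i}(\pm z_\ell/c_\ell - z_\ell/c_{\ell+1})}$ and the $e^{-\operatorname{i}\omega h_\ell/c_\ell}$ phases, and recognizing the conjugate term $q_{m,\ell}\overline{\cdots}$ as coming from the real/imaginary structure of the $f_{m,1}$-entries versus their conjugates in $\tilde\gamma^+$ --- is the bookkeeping heart of the argument. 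Remark \ref{rmk:wronskian} (that $\operatorname{i}w^{1,2}_{m,\ell+1,\ell+1,\ell}\in\mathbb R_{>0}$) and the identity \eqref{wmNNNcomput} are used to make the denominators cancel cleanly and to justify that the normalization is real-valued, which is what makes the $\operatorname{Im}(\cdot)$ appear in the even-indexed entry.

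I expect the main obstacle to be purely organizational rather than conceptual: keeping the indices on the Wronskians $w^{p,q}_{m,j,k,\ell}$ straight through the Cramer expansion, and correctly matching the many exponential factors so that the messy quotient of determinants telescopes into the clean ratio $\beta_{m,\ell}/\bigl(\operatorname{e}^{\operatorname{i} z_n/c_{n+1}}\beta_{m,n}\bigr)$. A secondary subtlety is the sign and the odd/even distinction: one must verify that deleting rows/columns to form $\mathbf M^{(2\ell)}_m$ versus $\mathbf M^{(2\ell-1)}_m$ produces exactly the stated $\beta_{m,\ell}$ versus $\beta_{m,\ell-1}$ (note the shift by one in the odd case), and that the overall sign is $-1$ as written --- this comes from the position $(\,\cdot\,,2n)$ of the entry in the cofactor expansion and the sign of $w^{1,2}_{m,N,N,N}$. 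Since the paper itself flags that "the proof of representation \eqref{GreenFunRep} is technical and shifted to Section \ref{sec:proofRep}", I would present here only the skeleton --- Cramer $\Rightarrow$ minor recursion $\Rightarrow$ induction identifying minors with $\beta$'s --- and defer the full Wronskian algebra.
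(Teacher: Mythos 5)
Your overall plan coincides with the paper's proof: Cramer's rule, reduction to leading principal minors via the tridiagonal block structure, a coupled two-term recursion for the minors, and an inductive identification with $\beta_{m,\ell}$ after peeling off Wronskian prefactors and exponential phases. The paper carries this out via the auxiliary pair $W_{m,\ell,1}$, $W_{m,\ell,2}$ (the analogue of your $(D_k,E_k)$ up to normalization) and an intermediate unnormalized sequence $\tilde\beta_{m,\ell}$, with Lemma~\ref{lem:detM1} providing the recursion for the minors and Lemma~\ref{lem:Wmn} the inductive identification. So the skeleton you propose is exactly right.

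There is, however, a genuine error in the inductive hypothesis you state for $E_k := \det\mathbf M^{(2k-1)}_m$. You claim $E_k$ matches, up to prefactors, $\operatorname{e}^{\operatorname*{i} z_{k-1}/c_k}\beta_{m,k-1}$ --- but that is essentially the same quantity you already assigned to $D_{k-1}$, which would make $\det\mathbf M^{(2k-1)}_m$ proportional to $\det\mathbf M^{(2k-2)}_m$ and is not what happens. From the paper's Lemma~\ref{lem:detM1} combined with Lemma~\ref{lem:Wmn}, the odd-sized minor satisfies $\det\mathbf{\hat M}_m^{(2k-1)} = -W_{m,k,2}/\prod w^{2,1}$ with
\[
W_{m,k,2} = -\tfrac{1}{2}\bigl(\overline{\tilde\beta_{m,k}}-(-1)^{k}\tilde\beta_{m,k}\bigr),
\]
i.e.\ a \emph{conjugate-symmetric combination of $\tilde\beta_{m,k}$ at the same index $k$, not a shift to $k-1$}. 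It is precisely this combination, after multiplication by the phase $\operatorname{e}^{\operatorname*{i}z_k/c_{k+1}}$, that collapses to $\operatorname{Im}\bigl(\operatorname{e}^{\operatorname*{i}z_k/c_{k+1}}\beta_{m,k}\bigr)$. With your stated identification the even-indexed Green's-operator entries would come out as (a phase times) $\beta_{m,\ell-1}/\beta_{m,n}$, indistinguishable from the odd ones, contradicting \eqref{eq:repMGreen}. The fix is to run the induction on the coupled pair with the odd-sized minor carrying the conjugate combination as its hypothesis; then the recursion \eqref{recursioncheck} for $\beta_{m,\ell}$ does fall out as you expect, and the $\operatorname{Im}(\cdot)$ appears automatically rather than by a separate reality argument.
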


The proof of Theorem \ref{th:final_rep} is postponed to Section
\ref{sec:proofRep}.

\subsection{Representation for the Fourier Mode $m=0$}

\label{Ch:Repm0} In this section, we restrict to $d=3$ and $m=0$. Then the
variables in the definition of the sequence $\beta_{0,\ell}$ simplify in the
following way%
\begin{align*}
\gamma_{0,\ell}^{+}  &  =\frac{c_{\ell+1}+c_{\ell}}{z_{\ell}^{2}},\quad
\gamma_{0,\ell}^{-}=\frac{c_{\ell+1}-c_{\ell}}{z_{\ell}^{2}},\\
q_{\ell}:=q_{0,\ell}  &  =\frac{c_{\ell+1}-c_{\ell}}{c_{\ell+1}+c_{\ell}%
},\qquad2\operatorname*{i}w_{m,k+1,k+1,k}^{1,2}=\frac{2c_{k+1}}{z_{k}^{2}},\\
\frac{2c_{k+1}}{z_{k}^{2}\gamma_{0,k}^{+}}  &  =\frac{2c_{k+1}}{c_{k}+c_{k+1}%
}=1+q_{k}.
\end{align*}
The recursion for $\beta_{0,\ell}$ is then given by%
\begin{equation}%
\begin{array}
[c]{l}%
\beta_{0,0}:=1,\\
\beta_{0,\ell}:=\dfrac{1}{1+q_{\ell}}\left(  \operatorname{e}%
^{-\operatorname*{i}\frac{\omega h_{\ell}}{c_{\ell}}}\beta_{0,\ell-1}+q_{\ell
}\overline{\operatorname{e}^{-\operatorname*{i}\omega\frac{h_{\ell}}{c_{\ell}%
}}\beta_{0,\ell-1}}\right)  .
\end{array}
\label{eq:recursion2m0}%
\end{equation}

\subsection{Main Stability Theorem\label{subsec:mainstability}}

First, we express the $\left\Vert \cdot\right\Vert _{\mathcal{H}}$-norm of the
function $u$ in \eqref{Ansatzu} in spherical coordinates and recall well-known
facts from spherical harmonics: The gradient in spherical coordinates is given
by%
\[
\left(  \nabla v\right)  \circ\psi=\partial_{r}\hat{v}\overrightarrow{e_{r}%
}+\frac{1}{r}\nabla_{\Gamma}\hat{v},
\]
where $\nabla_{\Gamma}$ denotes the surface gradient and $\overrightarrow
{e_{r}}$ the basis vector in $r$-direction. For the spherical harmonics
$Y_{m,\mathbf{n}}$ it holds%
\begin{align*}
\left(  Y_{m,\mathbf{n}},Y_{m^{\prime},\mathbf{n}^{\prime}}\right)  _{\Gamma
}&=\delta_{m,m^{\prime}}\delta_{\mathbf{n},\mathbf{n}^{\prime}}, 
\\
\left( \nabla_{\Gamma}Y_{m,\mathbf{n}},\nabla_{\Gamma}Y_{m,\mathbf{n}}\right)
_{\Gamma}&=\lambda_{m}\delta_{m,m^{\prime}}\delta_{\mathbf{n},\mathbf{n} 
^{\prime}},\\
\left(  \nabla_{\Gamma}Y_{m,\mathbf{n}},\overrightarrow{e_{r}}\right)  _{\Gamma}&=0. 
\end{align*}
Hence, the transformation rule of variables yield (with $\mathfrak{g}_{\nu
}\left(  r\right)  :=r^{\left(  d-1\right)  /2-\nu}$ and $I=\left(
0,1\right)  $)%
\begin{align*}
&\left\Vert u\right\Vert _{\mathcal{H}}^{2}=    \left\Vert \nabla u\right\Vert
^{2}+\left\Vert \frac{\omega}{c}u\right\Vert ^{2}\\
&=    \sum_{m\in\mathcal{N}}\sum_{\mathbf{n}\in\iota_{m}}\left(  \left\Vert
\hat{u}_{m,\mathbf{n}}^{\prime}\mathfrak{g}_{0}\right\Vert _{L^{2}\left(
I\right)  }^{2}\left\Vert Y_{m,\mathbf{n}}\right\Vert _{L^{2}\left(
\mathbb{S}_{d-1}\right)  }^{2}+\left\Vert \hat{u}_{m,\mathbf{n}} 
\mathfrak{g}_{1}\right\Vert _{L^{2}\left(  I\right)  }^{2}\left\Vert
\nabla_{\Gamma}Y_{m,\mathbf{n}}\right\Vert _{L^{2}\left(  \mathbb{S}%
_{d-1}\right)  }^{2}\right. \\
& \phantom{=\sum_{m\in\mathcal{N}}\sum_{\mathbf{n}\in\iota_{m}}+} \left.  +\left\Vert \frac{\omega}{c}\hat{u}_{m,\mathbf{n}}\mathfrak{g}%
_{0}\right\Vert _{L^{2}\left(  I\right)  }^{2}\left\Vert Y_{m,\mathbf{n}%
}\right\Vert _{L^{2}\left(  \mathbb{S}_{d-1}\right)  }^{2}\right) \\
&=    \sum_{m\in\mathcal{N}}\sum_{\mathbf{n}\in\iota_{m}}\left(  \left\Vert
\hat{u}_{m,\mathbf{n}}^{\prime}\mathfrak{g}_{0}\right\Vert _{L^{2}\left(
I\right)  }^{2}+\left\Vert \frac{\omega}{c}\hat{u}_{m,\mathbf{n}}%
\mathfrak{g}_{0}\right\Vert _{L^{2}\left(  I\right)  }^{2}+\lambda
_{m}\left\Vert \hat{u}_{m,\mathbf{n}}\mathfrak{g}_{1}\right\Vert
_{L^{2}\left(  I\right)  }^{2}\right)  .
\end{align*}
We split the integral over $I$ into a sum of integrals over $\tau_{j}$ and
employ ansatz \eqref{ansatzontauj} to get the estimate%
\begin{align*}
&\left\Vert u\right\Vert _{\mathcal{H}}^{2}    
\leq2\sum_{m\in\mathcal{N}} \sum_{\mathbf{n}\in\iota_{m}}\sum_{j=1}^{N}
\left(  \left(  \frac{\omega}{c_{j}}A_{\left(  m,\mathbf{n}\right)  ,j}\right)  ^{2}\left\Vert
f_{m,1}^{\prime}\left(  \frac{\omega}{c_{j}}\cdot\right)  \mathfrak{g}_{0}\right\Vert _{L^{2}\left(  \tau_{j}\right)  }^{2}\right.
\\
& \left.+\left(  \frac{\omega
}{c_{j}}B_{\left(  m,\mathbf{n}\right)  ,j}\right)  ^{2}\left\Vert
f_{m,2}^{\prime}\left(  \frac{\omega}{c_{j}}\cdot\right)  \mathfrak{g}%
_{0}\right\Vert _{L^{2}\left(  \tau_{j}\right)  }^{2}\right.  \\
& +\left.  \left(  \frac{\omega}{c_{j}}A_{\left(  m,\mathbf{n}\right)
,j}\right)  ^{2}\left\Vert f_{m,1}\left(  \frac{\omega}{c_{j}}\cdot\right)
\mathfrak{g}_{0}\right\Vert _{L^{2}\left(  \tau_{j}\right)  }^{2}+\left(
\frac{\omega}{c_{j}}B_{\left(  m,\mathbf{n}\right)  ,j}\right)  ^{2}\left\Vert
f_{m,2}\left(  \frac{\omega}{c_{j}}\cdot\right)  \mathfrak{g}_{0}\right\Vert
_{L^{2}\left(  \tau_{j}\right)  }^{2}\right. \\
&  +\left.  \lambda_{m}\left(  A_{\left(  m,\mathbf{n}\right)  ,j}%
^{2}\left\Vert f_{m,1}\left(  \frac{\omega}{c_{j}}\cdot\right)  \mathfrak{g}%
_{1}\right\Vert _{L^{2}\left(  \tau_{j}\right)  }^{2}+B_{\left(
m,\mathbf{n}\right)  ,j}^{2}\left\Vert f_{m,2}\left(  \frac{\omega}{c_{j}%
}\cdot\right)  \mathfrak{g}_{1}\right\Vert _{L^{2}\left(  \tau_{j}\right)
}^{2}\right)  \right)  .
\end{align*}
If $m=0$, we have $\lambda_{m}=0$ and the term $\lambda_{m}\left(
\ldots\right)  $ vanishes.

We set $g_{m,\mathbf{n}}=0$ for all $m\in\mathcal{N}$ and $\mathbf{n}\in
\iota_{m}$ with $\left(  m,\mathbf{n}\right)  \neq\left(  0,\mathbf{0}\right)
$. Then, the solution $u$ in the form \eqref{Ansatzu} satisfies
$u_{m,\mathbf{n}}=0$ for all $m\in\mathcal{N}$ and $\mathbf{n}\in\iota_{m}$
with $\left(  m,\mathbf{n}\right)  \neq\left(  0,\mathbf{0}\right)  $. We set
$A_{j}:=A_{\left(  0,\mathbf{0}\right)  ,j}$ and $B_{j}:=B_{\left(
0,\mathbf{0}\right)  ,j}$ and get%
\begin{align}
\left\Vert u\right\Vert _{\mathcal{H}}^{2}  &  \leq2\sum_{j=1}^{N}\left(
\left(  \frac{\omega}{c_{j}}A_{j}\right)  ^{2}\left\Vert f_{0,1}^{\prime
}\left(  \frac{\omega}{c_{j}}\cdot\right)  \mathfrak{g}_{0}\right\Vert
_{L^{2}\left(  \tau_{j}\right)  }^{2}+\left(  \frac{\omega}{c_{j}}%
B_{j}\right)  ^{2}\left\Vert f_{0,2}^{\prime}\left(  \frac{\omega}{c_{j}}%
\cdot\right)  \mathfrak{g}_{0}\right\Vert _{L^{2}\left(  \tau_{j}\right)
}^{2}\right. \label{eq:gradl2estsol}\\
&  +\left.  \left(  \frac{\omega}{c_{j}}A_{j}\right)  ^{2}\left\Vert
f_{0,1}\left(  \frac{\omega}{c_{j}}\cdot\right)  \mathfrak{g}_{0}\right\Vert
_{L^{2}\left(  \tau_{j}\right)  }^{2}+\left(  \frac{\omega}{c_{j}}%
B_{j}\right)  ^{2}\left\Vert f_{0,2}\left(  \frac{\omega}{c_{j}}\cdot\right)
\mathfrak{g}_{0}\right\Vert _{L^{2}\left(  \tau_{j}\right)  }^{2}\right)
\label{eq:l2estsol}%
\end{align}

We are also interested in pointwise estimates. For $r\in\tau_{j}$ we have in
the considered case (note that $\nabla_{\Gamma}Y_{0\mathbf{,0}}\left(
\boldsymbol{\xi}\right)  =0$ and $\left\vert Y_{0\mathbf{,0}}\left(
\boldsymbol{\xi}\right)  \right\vert =\left\vert \Gamma\right\vert ^{-1/2}$
with the surface measure $\left\vert \Gamma\right\vert $ of $\mathbb{S}_{d-1}%
$)%
\begin{align*}
\left\vert \hat{u}\left(  r,\boldsymbol{\xi}\right)  \right\vert  &
\leq\left\vert \Gamma\right\vert ^{-1/2}\left(  \left\vert A_{j}\right\vert
\left\vert f_{0,1}\left(  \frac{\omega}{c_{j}}r\right)  \right\vert
+\left\vert B_{j}\right\vert \left\Vert f_{0,2}\left(  \frac{\omega}{c_{j}%
}\cdot\right)  \right\Vert _{L^{\infty}\left(  \tau_{j}\right)  }\right)  ,\\
\left\vert \left(  \nabla u\right)  \circ\psi\left(  r,\boldsymbol{\xi
}\right)  \right\vert  &  \leq\left\vert \Gamma\right\vert ^{-1/2}\left(
\left\vert A_{j}\right\vert \left\vert f_{0,1}^{\prime}\left(  \frac{\omega
}{c_{j}}r\right)  \right\vert +\left\vert B_{j}\right\vert \left\Vert
f_{0,2}^{\prime}\left(  \frac{\omega}{c_{j}}\cdot\right)  \right\Vert
_{L^{\infty}\left(  \tau_{j}\right)  }\right)  .
\end{align*}

\begin{remark}
\label{RemSing}The Hankel function $h_{0}^{(1)}(r)$ has a singularity of order
$O(r^{-1})$ and its derivative behaves as $O\left(  r^{-2}\right)  $ for
$r\rightarrow0$ (cf. \cite{NIST:DLMF}). Note, that $A_{1}=0$ (by 
\eqref{Amn1}) and this singularity is not critical for the first interval
$\tau_{1}$. A refined estimation of the coefficient $A_{\ell}$ will be
discussed in Section \ref{subsec:refinedestimate} in order to ensure that the
\textquotedblleft near-singularities\textquotedblright\ of the Hankel function
on $\tau_{2}$, $\tau_{3}$,\ldots are damped by the smallness of the
corresponding coefficients $A_{\ell}$.
\end{remark}

\begin{notation}
We write $A\lesssim B$ if there exits a constant $C\in\mathbb{R}_{>0}$ such
that $A\leq CB$. We note that in the following estimates the constant $C$ is
independent of the boundary data $\hat{g}_{0}$, the wave speed $c$, the
frequency $\omega$, and the number of jumps $n$ but depends on $c_{\min}$
and $c_{\max}$.
\end{notation}

We present the main stability result of this paper.

\begin{theorem}
\label{thm:finalStab} Let $d=3$ and let $u$ be the solution of
\eqref{modelproblemDtN} with boundary data given by $\hat{g}(\boldsymbol{\xi
})=\hat{g}_{0}Y_{0,\mathbf{0}}(\boldsymbol{\xi})$. Let the wave speed $c$ be
constant on concentric annular regions and oscillate between the two values
$0<c_{\min}\leq c_{1},c_{2}\leq c_{\max}<\infty$, i.e.
\[
c_{j}=%
\begin{cases}
c_{1}, & \text{if }j\text{ is odd,}\\
c_{2} & \text{if }j\text{ is even,}%
\end{cases}
\qquad\forall1\leq j\leq n+1.
\]
Then it holds
\[
\left\Vert u\right\Vert _{\mathcal{H}}\lesssim C|\hat{g}_{0}|\alpha^{\omega}%
\]
for constants $\alpha>1$ which depends only on $c_{\min},c_{\max}$. For
$r\in (0,1)$, the pointwise estimates hold%
\begin{align*}
\left\vert \hat{u}\left(  r,\boldsymbol{\xi}\right)  \right\vert  &  \lesssim\alpha^{\omega}\left\vert \hat{g}%
_{0}\right\vert ,\\
\left\vert \left(  \nabla u\right)  \circ\psi\left(  r,\boldsymbol{\xi}\right)  \right\vert  &  \lesssim r\alpha^{\omega}\left\vert \hat{g}%
_{0}\right\vert .
\end{align*}

\end{theorem}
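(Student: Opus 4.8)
The plan is to reduce the energy-norm and pointwise bounds to control of the
sequence $\beta_{0,\ell}$ via the Green's operator representation from Theorem
\ref{th:final_rep}, and then to show that $|\beta_{0,\ell}|$ can only grow or
decay at an exponential-in-$\omega$ rate under the two-value hypothesis on
$c$. First I would fix $d=3$, $m=0$, so that the simplified recursion
\eqref{eq:recursion2m0} applies, and recall the explicit prefactor
$w_{0,N,N,N}^{1,2}$ from \eqref{wmNNNcomput}; together with \eqref{BmnN} and
the right-hand side $\mathbf{r}$ of \eqref{fullinteriorsysfinal}, this lets me
write the coefficients $A_j,B_j$ appearing in \eqref{eq:gradl2estsol}--%
\eqref{eq:l2estsol} as explicit multiples of the Green's-operator entries
$(\mathbf{M}_0^{\operatorname{Green}})_{2\ell,2n}$ and
$(\mathbf{M}_0^{\operatorname{Green}})_{2\ell-1,2n}$ times $\hat g_0$. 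By the
representation \eqref{eq:repMGreen} each such entry is a ratio
$\beta_{0,\ell-1}/\beta_{0,n}$ or
$\operatorname{Im}(\mathrm{e}^{\mathrm{i}z_\ell/c_{\ell+1}}\beta_{0,\ell})/\beta_{0,n}$
up to unimodular factors, so in absolute value every coefficient is bounded by
$C|\hat g_0|\,\max_{0\le\ell\le n}|\beta_{0,\ell}| / |\beta_{0,n}|$ times a
harmless power of $\omega/c_j$ coming from the spherical-Bessel normalisation
in $f_{0,1}=h_0^{(1)}$, $f_{0,2}=j_0$ on $\tau_1$.

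\textbf{Bounding $\beta_{0,\ell}$.}
The core of the argument is the claim, announced in Section
\ref{subsec:mainstability}, that there is $\alpha>1$ depending only on
$c_{\min},c_{\max}$ with $\alpha^{-\omega}\le|\beta_{0,\ell}|\le\alpha^{\omega}$
for all $\ell$. I would prove this from the linear form of the recursion in
Remark \ref{RemLinRec}: with $q_\ell=q_{0,\ell}\in\{q,-q\}$ for the single
value $q=(c_2-c_1)/(c_2+c_1)$ (depending only on $c_{\min},c_{\max}$), the
$2\times2$ transfer matrix in Remark \ref{RemLinRec} has entries that are
bounded trigonometric expressions in $\omega h_\ell/c_\ell$, hence its operator
norm is bounded by a constant $\Lambda$ depending only on $c_{\min},c_{\max}$;
iterating gives $|\beta_{0,\ell}|\le\Lambda^{\ell}$. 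The point is that $\ell\le
n$ and $n$ is \emph{not} controlled by $\omega$, so I must instead observe that
$\beta_{0,\ell}$ can change appreciably only across intervals $\tau_\ell$ with
$\omega h_\ell/c_\ell$ not a near-multiple of $\pi$: when
$\mathrm{e}^{-\mathrm{i}\omega h_\ell/c_\ell}=\pm1$ the recursion
\eqref{eq:recursion2m0} forces $\beta_{0,\ell}=\pm\beta_{0,\ell-1}$ exactly
(the $q_\ell$-terms cancel against the $1+q_\ell$ denominator), so $|\beta|$ is
preserved; a quantitative version of this, using that
$\sum_\ell h_\ell = 1$ so only $O(\omega)$ of the intervals can have phase
bounded away from $\{0,\pi\}$, yields the exponential-in-$\omega$ bound with a
base $\alpha$ that is a fixed function of $q$ and the phase threshold. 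The same
reasoning applied to the reciprocal recursion (or to $1/|\beta_{0,\ell}|$)
gives the matching lower bound $|\beta_{0,n}|\ge\alpha^{-\omega}$.

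\textbf{Assembling the estimates.}
Combining the two bounds gives
$\max_\ell|\beta_{0,\ell}|/|\beta_{0,n}|\le\alpha^{2\omega}$, so every
coefficient $A_j,B_j$ is $\lesssim|\hat g_0|\,\alpha^{\omega}$ after absorbing
the base change $\alpha^2\mapsto\alpha$. For the energy norm I insert this into
\eqref{eq:gradl2estsol}--\eqref{eq:l2estsol}: the $L^2(\tau_j)$ norms of
$f_{0,1}(\tfrac{\omega}{c_j}\cdot)\mathfrak g_0$, $f_{0,1}'(\cdot)\mathfrak
g_0$ and their $f_{0,2}$-counterparts over the intervals $\tau_j\subset(0,1)$
are each $\lesssim 1$ (using the asymptotics $|h_0^{(1)}(x)|\sim x^{-1}$,
$|j_0(x)|\le 1$, and the fact recalled in Remark \ref{RemSing} that $A_1=0$
kills the $r^{-1}$ singularity of $h_0^{(1)}$ on $\tau_1$, together with a
refined smallness bound on $A_\ell$ for $\ell\ge2$ as flagged there); summing
the $N$ bounded terms and noting $N\le C\omega$ under the hypothesis (since
each $h_j$ is bounded below by a constant times $c_{\min}/\omega$ whenever the
construction is non-degenerate) gives
$\|u\|_{\mathcal H}^2\lesssim\omega\,\alpha^{2\omega}|\hat g_0|^2\lesssim
\tilde\alpha^{2\omega}|\hat g_0|^2$. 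The pointwise estimates follow identically
from the displayed bounds on $|\hat u(r,\boldsymbol\xi)|$ and
$|(\nabla u)\circ\psi(r,\boldsymbol\xi)|$ in terms of $|A_j|,|B_j|$ and
$\|f_{0,i}^{(k)}(\tfrac{\omega}{c_j}\cdot)\|_{L^\infty(\tau_j)}$; for the
gradient the extra factor $r$ appears because $f_{0,1}'=(h_0^{(1)})'$ and
$f_{0,2}'=j_0'$ contribute a factor $r$ relative to their $r^{-1}$ and $O(1)$
sizes near the diagonal, while on the first interval $A_1=0$ again removes the
only genuinely singular contribution. The main obstacle is the middle step:
proving the uniform-in-$n$, exponential-in-$\omega$ two-sided bound on
$|\beta_{0,\ell}|$, i.e. correctly exploiting the phase cancellation in
\eqref{eq:recursion2m0} so that the constant $\alpha$ ends up independent of
$n$; everything else is bookkeeping with Bessel asymptotics.
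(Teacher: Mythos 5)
Your high-level plan (Green's representation $\to$ two-sided exponential bound on $|\beta_{0,\ell}|$ $\to$ Bessel/Hankel estimates) matches the paper's, but the mechanism you propose for the central estimate is incorrect, and the assembly step hides an unjustified assumption.

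\textbf{The single-step phase-cancellation claim is false.} You assert that when $\mathrm{e}^{-\I\omega h_{\ell}/c_{\ell}}=\pm1$ the recursion \eqref{eq:recursion2m0} forces $\beta_{0,\ell}=\pm\beta_{0,\ell-1}$, ``so $|\beta|$ is preserved''. Writing $\beta_{0,\ell-1}=a+\I b$, the phase $+1$ gives
\[
\beta_{0,\ell}=\frac{1}{1+q_{\ell}}\left(\beta_{0,\ell-1}+q_{\ell}\overline{\beta_{0,\ell-1}}\right)=a+\frac{1-q_{\ell}}{1+q_{\ell}}\,\I b,
\]
which changes the modulus whenever $b\neq0$ (and $\beta_{0,\ell-1}$ is not real in general). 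Consequently a single thin interval \emph{does} distort $|\beta|$ by a factor of order $(1-|q|)/(1+|q|)$, and your iteration over ``bad'' intervals does not close. The paper avoids this by passing to the \emph{two-step} recursion \eqref{doublerec1}, which, because $q_{\ell}=(-1)^{\ell+1}q$, cancels the $q$-distortion across a pair of layers: $\delta_{\ell}\to0$ gives $\beta_{0,\ell}=\mathrm{e}^{-\I\delta_{\ell-1}}\beta_{0,\ell-2}$ exactly, and in general $|\beta_{0,\ell}|^{2}\leq|\beta_{0,\ell-2}|^{2}\bigl(1+C\min\{\delta_{\ell},1\}\bigr)$. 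It is precisely this two-step telescoping — not single-step phase preservation — that converts the constraint $\sum_{k}h_{k}=1$ into $\sum_{k}\delta_{k}\lesssim\omega$ and hence $\alpha^{\pm\omega}$; without the alternation in sign of $q_{\ell}$ built into the double step, the argument yields only a bound exponential in $n$.

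\textbf{Two further gaps in the assembly.} First, you invoke ``$N\leq C\omega$ under the hypothesis'' via a lower bound $h_{j}\gtrsim c_{\min}/\omega$. No such non-degeneracy is part of Theorem \ref{thm:finalStab}; the paper explicitly asserts that the estimate is independent of $n$ and of the jump positions, and this uniformity is the whole point. The correct way to avoid a factor of $N$ is to use that the interval estimates in Lemma \ref{lem:integralhankelbessel} each carry a factor $h_{j}$, so the sum over $j$ is bounded by $\max_{j}(\cdot)\cdot\sum_{j}h_{j}=\max_{j}(\cdot)$ — no $N$ appears. Second, the ``refined smallness bound on $A_{\ell}$'' you defer to is not a footnote: the bound $\bigl|\operatorname{Im}\bigl(\mathrm{e}^{\I z_{\ell}/c_{\ell+1}}\beta_{0,\ell}\bigr)\bigr|\lesssim z_{\ell}^{3}$ (Proposition \ref{prop:refinedestbeta}) is essential to tame the $O(z^{-2})$ singularity of $(h_{0}^{(1)})^{\prime}$ near the origin, both in the energy estimate and in the pointwise gradient estimate (it is exactly what produces the extra factor $r$). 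Establishing it requires the detailed Taylor/perturbative analysis of Section \ref{subsec:refinedestimate}; without it the proof of both displayed estimates is incomplete.
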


\begin{remark}
We see that for the case $d=3$ and $m=0$ the function $u$ is not only bounded
with respect to the energy norm but also pointwise. However, the bound can
grow exponentially with respect to the wave number.
\end{remark}

\section{Sharpness of the Stability Theorem and Examples of Wave Localisation}

\label{sec:examples} The representation of the entries of the Green's operator
in Theorem \ref{th:final_rep} allows us to construct examples with specific
behaviours. In this chapter we discuss two localisation phenomena for $d=3$;
for wave localisations in one dimension we refer to \cite{SauterTorres2018}.

\subsection{Localisation at a Single Discontinuity (\textquotedblleft Whispering Gallery Modes\textquotedblright)}

\label{subsec:whisperinggallerymodes} For only one discontinuity, the Green's
operator is given by a $2\times2$-matrix and the solution of the system can be
computed explicitly. Indeed one computes with the ansatz \eqref{ansatzontauj}
for $d=3$
\begin{align*}
A_{m,1}  &  =0, & A_{m,2}  &  =\frac{\operatorname{i}\omega}{c_{2}}h_{m}%
^{(1)}\left(  \frac{\omega}{c_{2}}\right)  \frac{w_{m,2,1,1}^{2,2}%
}{w_{m,2,1,1}^{1,2}},\\
B_{m,1}  &  =\frac{1}{x_{1}^{2}\omega}\frac{h_{m}^{(1)}\left(  \frac{\omega
}{c_{2}}\right)  }{w_{m,2,1,1}^{1,2}}, & B_{m,2}  &  =\frac{\operatorname{i}%
\omega}{c_{2}}h_{m}^{(1)}\left(  \frac{\omega}{c_{2}}\right)  \hat{g}_{m}.
\end{align*}
This is well-defined (cf. \cite[Lem. 1]{HansenPoignardVogelius2007}). For
fixed $c_{2}>c_{1}>0$ and $x_{1}\in\left(  0,1\right)  $, consider
$w_{m,2,1,1}^{1,2}=W_{m}(\omega)$ as a function, depending on a
\textit{complex} frequency $\omega$. Let $\tilde{\omega}_{m}^{0}$ be the
smallest (in modulus) complex zero of $W_{m}(\omega)$, and choose the
frequency $\omega_{m}=\operatorname{Re}\left(  \tilde{\omega}_{m}^{0}\right)
$. Then it can be shown that the corresponding $B_{m,1}$ has an
super-algebraic growth in $m$ (cf. \cite{CapLeadPark2012}). This well known
phenomenon is referred to \textquotedblleft whispering gallery
modes\textquotedblright{} in the literature (see e.g. \cite{CapLeadPark2012}).
The wave localises along the interface of the jump of the wave speed.

\subsection{Localisation for Highly Varying Wave Speed\label{subse:criticalex}}

In Section \ref{Ch:Repm0}, we have seen that the representation of the Green's
operator is given by \eqref{eq:repMGreen} with the simplified recursion
for $\beta_{0,\ell}$ as in \eqref{eq:recursion2m0}. In this section, we describe
parameter configurations where localisation of waves occurs.

\begin{lemma}
Let $d=3$. Let $\omega>0$ and let the wave speed \(c\) be a piecewise constant function as described in \eqref{def:wavespeed} such that
\begin{equation}
\operatorname{e}^{-\operatorname*{i}\omega\frac{h_{\ell}}{c_{\ell}} 
}\in \{\I,-\I\}\quad\forall1\leq\ell\leq n+1\quad\text{with\quad
}h_{\ell}=x_{\ell}-x_{\ell-1}. \label{eq:AssPhaseFactorCritical} 
\end{equation}
Then, the last row of the radial Green's operator satisfies
\begin{align}
\left\vert \left(  \mathbf{M}_{0}^{\operatorname{Green}}\right)  _{2\ell
-1,2n}\right\vert &=\prod_{k=\ell}^{n}\left(  \frac{1+q_{k}}{1-(-1)^{k-1}q_{k}}\right)
\label{eq:Greensoscillodd}\\ 
\left\vert \left(  \mathbf{M}_{0}^{\operatorname{Green}}\right)  _{2\ell,2n}\right\vert 
&=\left|  \Img\left(  \operatorname{e}^{\operatorname*{i}
\frac{z_{\ell}}{c_{\ell+1}}}\I^{\ell}\right)  \right|
 \prod_{k=\ell}^{n}\left(  \frac{1+q_{k}}{1-(-1)^{k-1}q_{k}}\right)
\end{align}
for \(1\leq \ell\leq n\) with the relative jumps \( q_k\) defined by
\[
q_k:=\frac{c_{k+1}-c_{k}}{c_{k+1}+c_{k}}\in\left(  -1,1\right).
\]
\end{lemma}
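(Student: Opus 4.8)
The plan is to specialize the general representation from Theorem~\ref{th:final_rep} to the case $d=3$, $m=0$ and exploit the phase hypothesis \eqref{eq:AssPhaseFactorCritical}. Recall that for $d=3$, $m=0$ the Green's operator entries are
\[
\left\vert\left(\mathbf{M}_0^{\operatorname{Green}}\right)_{2\ell-1,2n}\right\vert=\left\vert\frac{\beta_{0,\ell-1}}{\beta_{0,n}}\right\vert,\qquad
\left\vert\left(\mathbf{M}_0^{\operatorname{Green}}\right)_{2\ell,2n}\right\vert=\left\vert\frac{\operatorname{Im}\left(\operatorname{e}^{\operatorname*{i}z_\ell/c_{\ell+1}}\beta_{0,\ell}\right)}{\beta_{0,n}}\right\vert,
\]
so everything reduces to computing $\lvert\beta_{0,\ell}\rvert$ and the argument of $\beta_{0,\ell}$ under \eqref{eq:AssPhaseFactorCritical}. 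First I would write $\operatorname{e}^{-\operatorname*{i}\omega h_\ell/c_\ell}=\operatorname*{i}\varepsilon_\ell$ with $\varepsilon_\ell\in\{+1,-1\}$, and plug this into the simplified recursion \eqref{eq:recursion2m0}:
\[
\beta_{0,\ell}=\frac{1}{1+q_\ell}\left(\operatorname*{i}\varepsilon_\ell\beta_{0,\ell-1}+q_\ell\overline{\operatorname*{i}\varepsilon_\ell\beta_{0,\ell-1}}\right)=\frac{1}{1+q_\ell}\left(\operatorname*{i}\varepsilon_\ell\beta_{0,\ell-1}-\operatorname*{i}\varepsilon_\ell q_\ell\overline{\beta_{0,\ell-1}}\right)=\frac{\operatorname*{i}\varepsilon_\ell}{1+q_\ell}\left(\beta_{0,\ell-1}-q_\ell\overline{\beta_{0,\ell-1}}\right).
\]
The key structural claim is then an induction hypothesis: $\beta_{0,\ell}=\operatorname*{i}^{\ell}\sigma_\ell\,\rho_\ell$ for a real positive $\rho_\ell$ and a sign $\sigma_\ell\in\{+1,-1\}$ — in other words $\beta_{0,\ell}$ is always purely real or purely imaginary. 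Indeed, if $\beta_{0,\ell-1}=\operatorname*{i}^{\ell-1}\sigma_{\ell-1}\rho_{\ell-1}$, then $\beta_{0,\ell-1}-q_\ell\overline{\beta_{0,\ell-1}}=\operatorname*{i}^{\ell-1}\sigma_{\ell-1}\rho_{\ell-1}-q_\ell\overline{\operatorname*{i}^{\ell-1}}\sigma_{\ell-1}\rho_{\ell-1}$, and since $\overline{\operatorname*{i}^{\ell-1}}=(-1)^{\ell-1}\operatorname*{i}^{\ell-1}$, this equals $\operatorname*{i}^{\ell-1}\sigma_{\ell-1}\rho_{\ell-1}\bigl(1-(-1)^{\ell-1}q_\ell\bigr)$; multiplying by $\operatorname*{i}\varepsilon_\ell/(1+q_\ell)$ yields $\beta_{0,\ell}=\operatorname*{i}^{\ell}\varepsilon_\ell\sigma_{\ell-1}\rho_{\ell-1}\frac{1-(-1)^{\ell-1}q_\ell}{1+q_\ell}$, which has the asserted form with $\rho_\ell=\rho_{\ell-1}\bigl\lvert\frac{1-(-1)^{\ell-1}q_\ell}{1+q_\ell}\bigr\rvert$ (note $1-(-1)^{\ell-1}q_\ell>0$ since $q_\ell\in(-1,1)$) and an appropriate sign $\sigma_\ell$. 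Unwinding from $\beta_{0,0}=1$ gives
\[
\lvert\beta_{0,\ell}\rvert=\prod_{k=1}^{\ell}\frac{1-(-1)^{k-1}q_k}{1+q_k}.
\]

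Next I would form the required ratios. For the odd-indexed entries,
\[
\left\vert\left(\mathbf{M}_0^{\operatorname{Green}}\right)_{2\ell-1,2n}\right\vert=\frac{\lvert\beta_{0,\ell-1}\rvert}{\lvert\beta_{0,n}\rvert}=\prod_{k=\ell}^{n}\frac{1+q_k}{1-(-1)^{k-1}q_k},
\]
after cancelling the common factors $k=1,\dots,\ell-1$ and inverting, which is exactly \eqref{eq:Greensoscillodd}. For the even-indexed entries, I would use $\beta_{0,\ell}=\operatorname*{i}^{\ell}\sigma_\ell\lvert\beta_{0,\ell}\rvert$ so that
\[
\operatorname{Im}\left(\operatorname{e}^{\operatorname*{i}z_\ell/c_{\ell+1}}\beta_{0,\ell}\right)=\sigma_\ell\lvert\beta_{0,\ell}\rvert\,\operatorname{Im}\left(\operatorname{e}^{\operatorname*{i}z_\ell/c_{\ell+1}}\operatorname*{i}^{\ell}\right),
\]
and therefore
\[
\left\vert\left(\mathbf{M}_0^{\operatorname{Green}}\right)_{2\ell,2n}\right\vert=\frac{\lvert\beta_{0,\ell}\rvert}{\lvert\beta_{0,n}\rvert}\left\vert\operatorname{Im}\left(\operatorname{e}^{\operatorname*{i}z_\ell/c_{\ell+1}}\operatorname*{i}^{\ell}\right)\right\vert=\left\vert\operatorname{Im}\left(\operatorname{e}^{\operatorname*{i}z_\ell/c_{\ell+1}}\operatorname*{i}^{\ell}\right)\right\vert\prod_{k=\ell}^{n}\frac{1+q_k}{1-(-1)^{k-1}q_k},
\]
which is the second displayed identity. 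Finally I would record that $q_k\in(-1,1)$ follows immediately from $c_k,c_{k+1}>0$, so the product is well defined and positive; this also justifies the absolute-value simplification $\lvert 1-(-1)^{k-1}q_k\rvert=1-(-1)^{k-1}q_k$ used above.

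The only subtle point — and the step I would check most carefully — is the claim that the phase hypothesis is \emph{exactly} what forces $\beta_{0,\ell}$ to stay on the real or imaginary axis; this is what collapses the two-dimensional linear recursion of Remark~\ref{RemLinRec} into a scalar product of positive reals and is the whole reason the Green's operator factorizes. One should also double check the bookkeeping of the sign $\sigma_\ell$ — it is irrelevant for the moduli we report, so it can be absorbed harmlessly, but it must be tracked consistently through the induction to be sure no spurious factor of $\operatorname*{i}$ creeps in; in particular the identity $\overline{\operatorname*{i}^{\,\ell-1}}=(-1)^{\ell-1}\operatorname*{i}^{\,\ell-1}$ is the crux and should be verified for $\ell-1$ of each parity. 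Everything else is routine telescoping of the product.
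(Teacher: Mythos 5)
Your approach matches the paper's: the authors state that under the phase hypothesis the recursion \eqref{eq:recursion2m0} yields $\lvert\beta_{0,\ell}\rvert=\prod_{k=1}^{\ell}\frac{1-(-1)^{k-1}q_k}{1+q_k}$ and then invoke the representation \eqref{eq:repMGreen}, and you supply the induction they leave out, showing $\beta_{0,\ell}=\operatorname*{i}^{\ell}\sigma_\ell\rho_\ell$ with $\rho_\ell>0$ and $\sigma_\ell\in\{\pm1\}$. That is exactly what is needed --- especially for the even-index entry, where one needs the \emph{phase} of $\beta_{0,\ell}$, not just its modulus, to pull out the prefactor $\lvert\operatorname{Im}(\operatorname{e}^{\operatorname*{i}z_\ell/c_{\ell+1}}\operatorname*{i}^{\ell})\rvert$ --- and the parity identity $\overline{\operatorname*{i}^{\ell-1}}=(-1)^{\ell-1}\operatorname*{i}^{\ell-1}$ is indeed the crux, as you flag.

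There is, however, a slip in the telescoping for the even-index entry: you assert $\frac{\lvert\beta_{0,\ell}\rvert}{\lvert\beta_{0,n}\rvert}=\prod_{k=\ell}^{n}\frac{1+q_k}{1-(-1)^{k-1}q_k}$, but the factors $k=1,\dots,\ell$ cancel, so the correct range is $\prod_{k=\ell+1}^{n}$; it is only the odd-index entry, which uses $\beta_{0,\ell-1}$, that telescopes to $\prod_{k=\ell}^{n}$. Consequently your final even-index formula does not follow from your own induction, although it reproduces the lemma as stated. The statement itself carries $\prod_{k=\ell}^{n}$ in both displays, and your derivation makes this look like an index typo in the source; the remainder of the paper only invokes the odd-index formula (cf.\ \eqref{eq:lowEstMGreen} and Lemma \ref{lem:criticalconstr}), so the discrepancy does not propagate. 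Apart from this off-by-one in the even-index product range, your derivation is complete and correct, and is the same argument as the paper's with the suppressed details filled in.
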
%

\begin{proof}
The claim follows directly by using the representation \eqref{eq:repMGreen} and that under the assumption \eqref{eq:AssPhaseFactorCritical} the recursion \eqref{eq:recursion2m0} for $\left\vert \beta_{0,\ell}\right\vert$ simplifies to
\[
\left\vert\beta_{0,\ell}\right\vert=\prod_{k=1}^{\ell}\left(  \frac
{1-(-1)^{k-1}q_{k}}{1+q_{k}}\right)  ,\qquad\forall0\leq\ell\leq n.
\]
\end{proof}

If we assume that the jumps \(q_k\) are such that \(q_\ell  = (-1)^{\ell-1} \left\vert q_\ell\right\vert\) and  \(\displaystyle\min_{1\leq \ell\leq n}  \left\vert q_\ell\right\vert  \geq q_{\min}>0\), then we can derive from the \eqref{eq:Greensoscillodd} 
\begin{align}\label{eq:lowEstMGreen}
\begin{split}
\left\vert \left(  \mathbf{M}_{0}^{\operatorname{Green}}\right)  _{2\ell
-1,2n}\right\vert &=\prod_{k=\ell}^{n}\left(  \frac{1+(-1)^{k-1} \left\vert q_k\right\vert}{1- \left\vert q_k\right\vert}\right)\\
&= \prod_{\substack{k=\ell\\k \text{ odd}}}^{n}\left(  \frac{1+ \left\vert q_k\right\vert}{1-  \left\vert q_k\right\vert}\right)\geq \left(  \frac{1+q_{\min}}{1-q_{\min}}\right) ^{\left(  \lceil\frac{n}{2}\rceil-\lfloor
\frac{\ell}{2}\rfloor\right)  },
\end{split}
\end{align}
i.e. the odd entries of the Green's operator grow exponentially in the number
of jumps. Numerical experiment show that in this case the solution \(u\) localizes in the center of the domain \(\Omega = B_1^3\). 

\begin{definition}
[localisation interference]\label{DefLocInt}Let $d=3$. For any number
$n\in\mathbb{N}$ of jumps, any set of radial jump points%
\begin{equation}
0=x_{0}<x_{1}<\ldots<x_{n+1}=1, \label{defjumppoints2}%
\end{equation}
any piecewise constant wave speed, i.e.
\begin{align*}
c\left(  \mathbf{x}\right)  :=
c_{\ell}\quad \forall\mathbf{x}\in B_{1}^{d}\text{ with }\left\vert \mathbf{x}%
\right\vert \in\left(  x_{\ell-1},x_{\ell}\right)  \quad1\leq\ell\leq n+1, \label{def:cosc}
\end{align*}
that is oscillatory with \(q_1>0\), i.e.
\[q_\ell  = (-1)^{\ell-1} \left\vert q_\ell\right\vert, \quad \text{and} \quad \min_{1\leq \ell\leq n} \left\vert q_\ell\right\vert \geq q_{\min}>0\]
and any frequency $\omega>0$, we say that the wave speed $c$ and the frequency
$\omega$ are in \emph{localisation interference} if%
\begin{equation}
\mathrm{\operatorname*{e}}^{-\operatorname*{i}\omega\frac{h_{\ell}}{c_{\ell}}%
}\in \{\I,-\I\}\quad\text{with } \quad h_\ell = x_{\ell}-x_{\ell-1},\quad \forall1\leq\ell\leq n+1.
\end{equation}
\end{definition}

\begin{lemma}\label{lem:criticalconstr}
Let \(0<q\leq\frac{c_{\max}-c_{\min}}{c_{\max}+c_{\min}}\) be fixed. For any \(n\in \mathbb{N}\), there exists a frequency \(\omega \) and a wave speed \(c\) such that they are in localisation interference and \(n\sim \omega\). As a consequence, it holds
for $1\leq\ell\leq n$
\begin{align*}
\left\vert \left(  \mathbf{M}_{0}^{\operatorname{Green}}\right)  _{2\ell
-1,2n}\right\vert =\left(  \frac{1+q}{1-q}\right)  ^{\left(  \lceil\frac{n}{2}\rceil-\lfloor
\frac{\ell}{2}\rfloor\right)  },
\end{align*}
i.e. the odd entries of the Green's operator grow exponentially in the number
of jumps \(n\sim \omega \).
\end{lemma}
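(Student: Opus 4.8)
The goal is essentially a construction/counting argument: for a prescribed relative jump magnitude $q$ and a prescribed number of jumps $n$, we must produce a frequency $\omega$ and a wave speed $c$ (i.e.\ radial jump points $x_0<\dots<x_{n+1}=1$ and values $c_\ell$) so that the hypotheses of Definition~\ref{DefLocInt} are met and $n\sim\omega$. The alternating-sign condition $q_\ell=(-1)^{\ell-1}|q_\ell|$ is achieved by simply alternating the two wave speeds, say $c_j=c_1$ for odd $j$ and $c_j=c_2$ for even $j$ with $c_2>c_1$; then $q_\ell=\pm\frac{c_2-c_1}{c_2+c_1}$ with the required sign pattern, and choosing $c_1,c_2\in[c_{\min},c_{\max}]$ with $\frac{c_2-c_1}{c_2+c_1}=q$ (possible since $q\le\frac{c_{\max}-c_{\min}}{c_{\max}+c_{\min}}$) gives $q_{\min}=q$ with equality throughout. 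Once that is fixed, the only remaining freedom is in the interval lengths $h_\ell$ and the frequency $\omega$.

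\textbf{The phase condition.} The localisation-interference requirement is $\operatorname{e}^{-\operatorname*{i}\omega h_\ell/c_\ell}\in\{\I,-\I\}$ for every $\ell$, i.e.
\[
\omega\,\frac{h_\ell}{c_\ell}\in \frac{\pi}{2}+\pi\mathbb{Z}\qquad(1\le \ell\le n+1).
\]
The natural choice is to take all these phases equal, say $\omega h_\ell/c_\ell=\pi/2$, which forces $h_\ell=\frac{\pi c_\ell}{2\omega}$. Since $c_\ell$ only takes the two values $c_1,c_2$, the lengths take only two values $h^{\mathrm{odd}}=\frac{\pi c_1}{2\omega}$ and $h^{\mathrm{even}}=\frac{\pi c_2}{2\omega}$. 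The constraint $\sum_{\ell=1}^{n+1}h_\ell=1$ (since $x_0=0$, $x_{n+1}=1$) then reads
\[
1=\sum_{\ell=1}^{n+1}h_\ell=\frac{\pi}{2\omega}\sum_{\ell=1}^{n+1}c_\ell=\frac{\pi}{2\omega}\bigl(\lceil(n{+}1)/2\rceil\,c_1+\lfloor(n{+}1)/2\rfloor\,c_2\bigr),
\]
which we \emph{solve for} $\omega$: this equation determines $\omega$ uniquely once $n$, $c_1$, $c_2$ are fixed, and since $\sum_\ell c_\ell$ is between $(n+1)c_{\min}$ and $(n+1)c_{\max}$, we get $\omega=\frac{\pi}{2}\sum_\ell c_\ell\in\bigl[\tfrac{\pi}{2}(n{+}1)c_{\min},\ \tfrac{\pi}{2}(n{+}1)c_{\max}\bigr]$, i.e.\ $\omega\sim n$ with implied constants depending only on $c_{\min},c_{\max}$. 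With this $\omega$ the jump points $x_\ell=\sum_{k\le\ell}h_k$ are well-defined, strictly increasing, and satisfy $x_{n+1}=1$; all hypotheses of Definition~\ref{DefLocInt} hold by construction, with $q_{\min}=q$.

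\textbf{Conclusion via the Lemma.} Having verified that $c$ and $\omega$ are in localisation interference with $|q_\ell|\equiv q$, we invoke the unnamed Lemma above (the one immediately preceding Definition~\ref{DefLocInt}), whose displayed estimate \eqref{eq:lowEstMGreen} in the equality case $|q_\ell|\equiv q$ reads
\[
\bigl|(\mathbf{M}_0^{\operatorname{Green}})_{2\ell-1,2n}\bigr|=\prod_{\substack{k=\ell\\ k\text{ odd}}}^{n}\frac{1+q}{1-q}=\Bigl(\frac{1+q}{1-q}\Bigr)^{\lceil n/2\rceil-\lfloor \ell/2\rfloor},
\]
which is exactly the claimed formula; combined with $n\sim\omega$ this gives the asserted exponential-in-$\omega$ growth. \textbf{The only mild obstacle} is bookkeeping: one should double-check that the count $\lceil n/2\rceil-\lfloor\ell/2\rfloor$ of odd indices $k$ with $\ell\le k\le n$ is correct for all parities of $\ell$ and $n$ (it is), and that the choice $h_\ell=\pi c_\ell/(2\omega)$ indeed yields $\operatorname{e}^{-\operatorname*{i}\pi/2}=-\I\in\{\I,-\I\}$ — a trivial check. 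Everything else is the routine algebra of summing a two-valued sequence of interval lengths. A remark could note that other phase choices $\pi/2+\pi k_\ell$ give additional families of admissible $(\omega,c)$, but a single explicit construction suffices for the existence statement.
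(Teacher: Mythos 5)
Your construction is correct and is essentially identical to the paper's own proof: alternate $c_1,c_2$ with $\frac{c_2-c_1}{c_2+c_1}=q$, set $h_\ell=\frac{\pi c_\ell}{2\omega}$ so that every phase equals $\operatorname{e}^{-\operatorname*{i}\pi/2}$, determine $\omega=\frac{\pi}{2}\sum_{j=1}^{n+1}c_j\sim n$ from the constraint $\sum_\ell h_\ell=1$, and conclude via the equality case of \eqref{eq:lowEstMGreen}. Your additional remarks (that $c_1,c_2$ can be taken in $[c_{\min},c_{\max}]$, and the explicit two-sided bound $\tfrac{\pi}{2}(n{+}1)c_{\min}\le\omega\le\tfrac{\pi}{2}(n{+}1)c_{\max}$) only make the paper's argument slightly more explicit.
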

\begin{proof}
Choose \(0< c_1 < c_2<\infty\) such that \(\frac{c_2-c_1}{c_2+c_1}= q\). For any \(n\in \mathbb{N}\) fixed we set
\begin{align*}
c_j &= \begin{cases}
c_1 & \text{if } j \text{ is odd,}\\
c_2 & \text{if } j \text{ is even.}
\end{cases} \\
h_{\ell} &=\frac{\pi
}{2\omega}c_{\ell},\quad\text{ for }1\leq\ell\leq n+1,\\
x_{\ell}  &= \sum_{j=1}^\ell h_{\ell}, \quad 0\leq \ell \leq n+1
\end{align*}
and define the wave speed \(c\) by
\begin{align*}
c\left(  \mathbf{x}\right)  :=
c_{\ell}\quad \forall\mathbf{x}\in B_{1}^{d}\text{ with }\left\vert \mathbf{x}%
\right\vert \in\left(  x_{\ell-1},x_{\ell}\right) .
\end{align*}
In order to respect the condition $\sum_{j=1}^{n+1}h_{j}=1$, we define the frequency by
\begin{align*}
\omega := \frac{\pi}{2}\sum_{j=1}^{n+1}c_j \sim O(n).
\end{align*}
The representation of the odd entries of the Green's operator follow from \eqref{eq:lowEstMGreen}.
\end{proof}

We have seen that for any given $0<c_{1}<c_{2}$, a set of jump points can be
chosen along a frequency $\omega$ such that localisation appears. Figure \ref{fig:critical} shows examples of the solution $u$ of the Helmholtz equation with respect to the wave speed \(c\) and \(\omega\) as constructed in Lemma \ref{lem:criticalconstr} for different values of \(n\).
\begin{figure}[th]
\centerline{
\begin{subfigure}[c]{0.5\textwidth}
\includegraphics[scale=0.4]{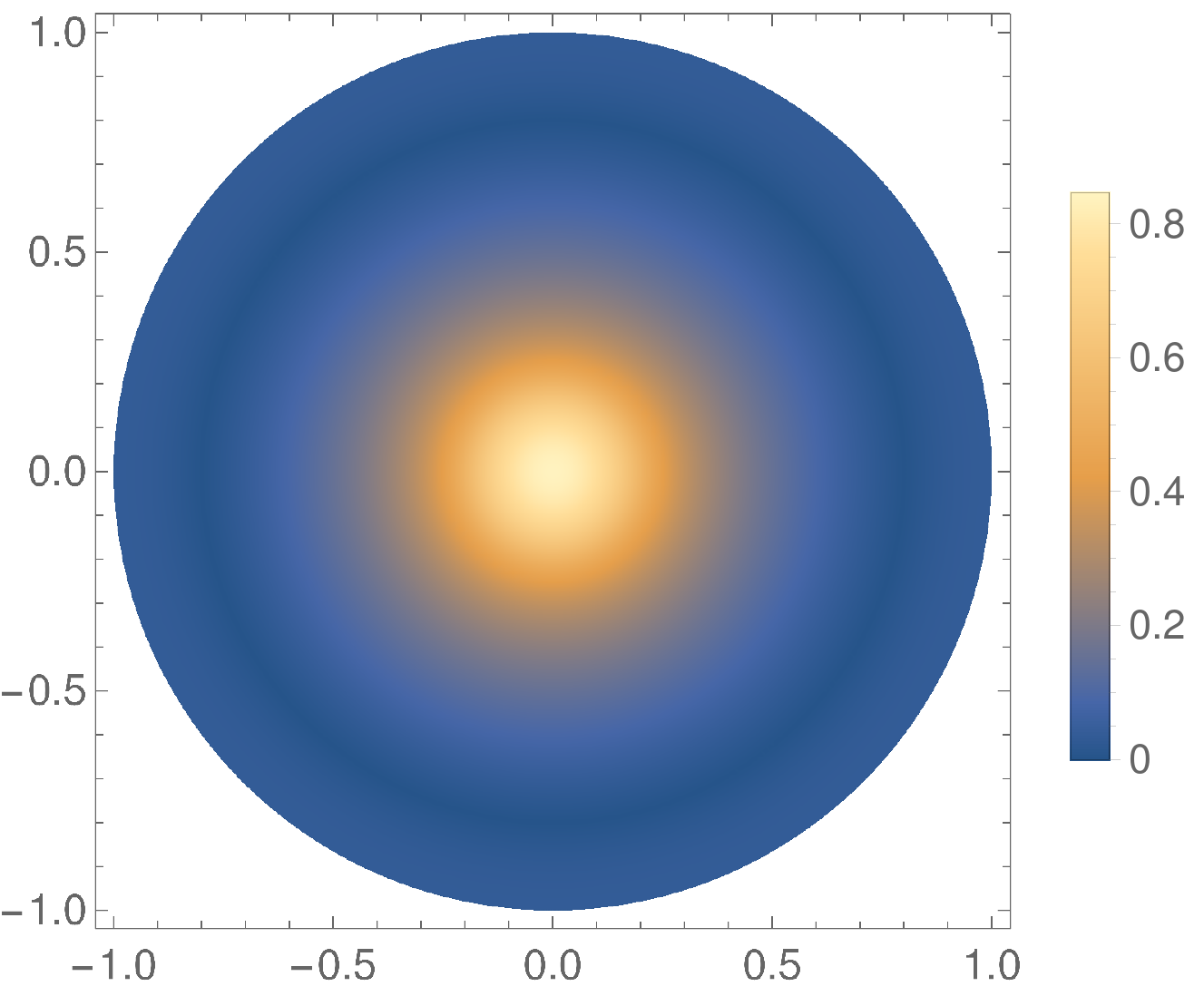}
\subcaption{\(n=2,~\omega \approx 7.85 \), \(\|\hat{u}_{0,\textbf{0}}Y_{0,\textbf{0}}\|_{\infty} \approx 0.85\)}
\end{subfigure}
\begin{subfigure}[c]{0.5\textwidth}
\includegraphics[scale=0.4]{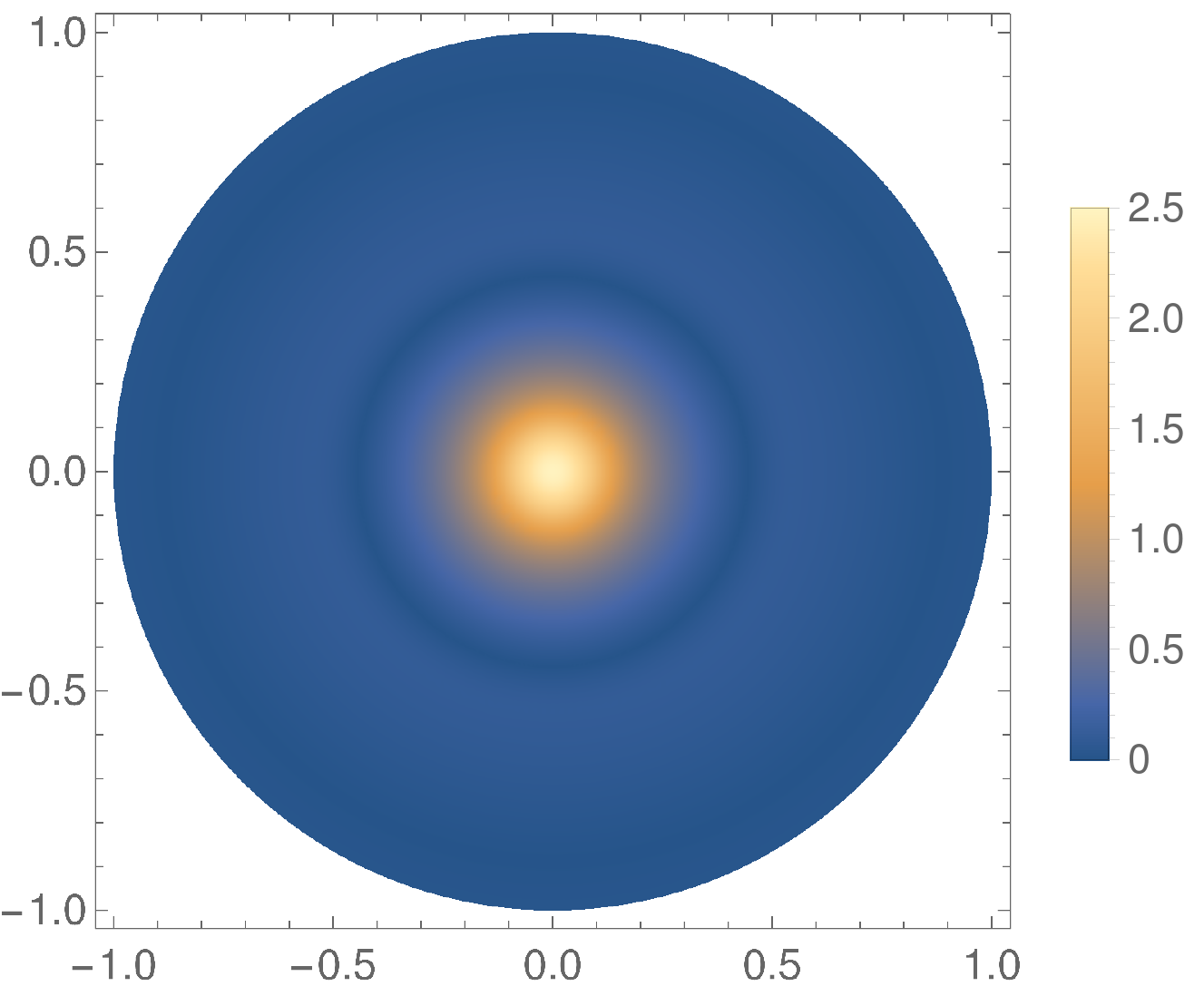}
\subcaption{\(n=4,~\omega \approx 14.1 \), \(\|\hat{u}_{0,\textbf{0}}Y_{0,\textbf{0}}\|_{\infty}\approx 2.5\)}
\end{subfigure}} \centerline{
\begin{subfigure}{0.5\textwidth}
\includegraphics[scale=0.4]{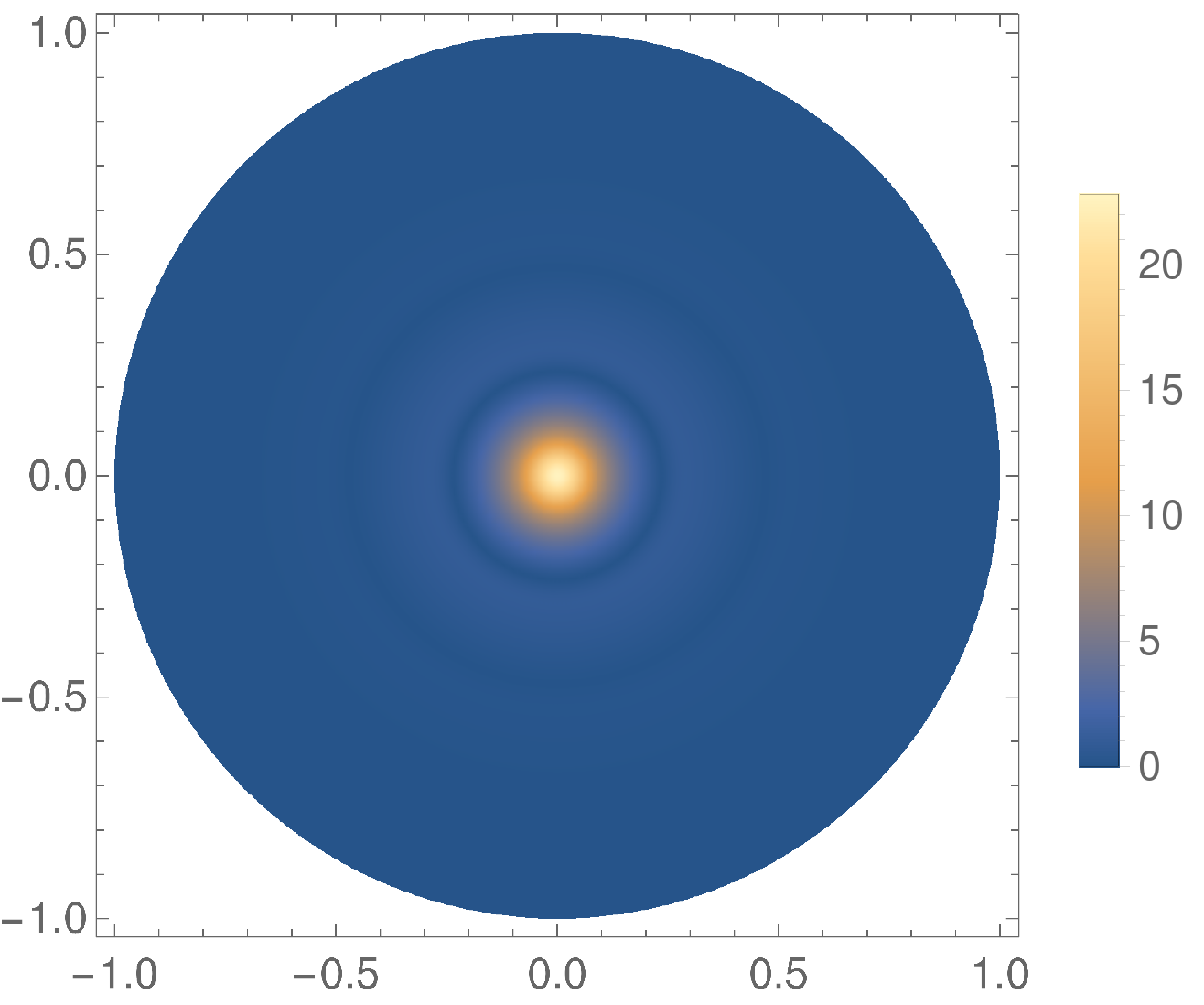}
\subcaption{\(n=8,~\omega \approx 26.7 \), \(\|\hat{u}_{0,\textbf{0}}Y_{0,\textbf{0}}\|_{\infty}\approx 22\)}
\end{subfigure}
\begin{subfigure}{0.5\textwidth}
\includegraphics[scale=0.4]{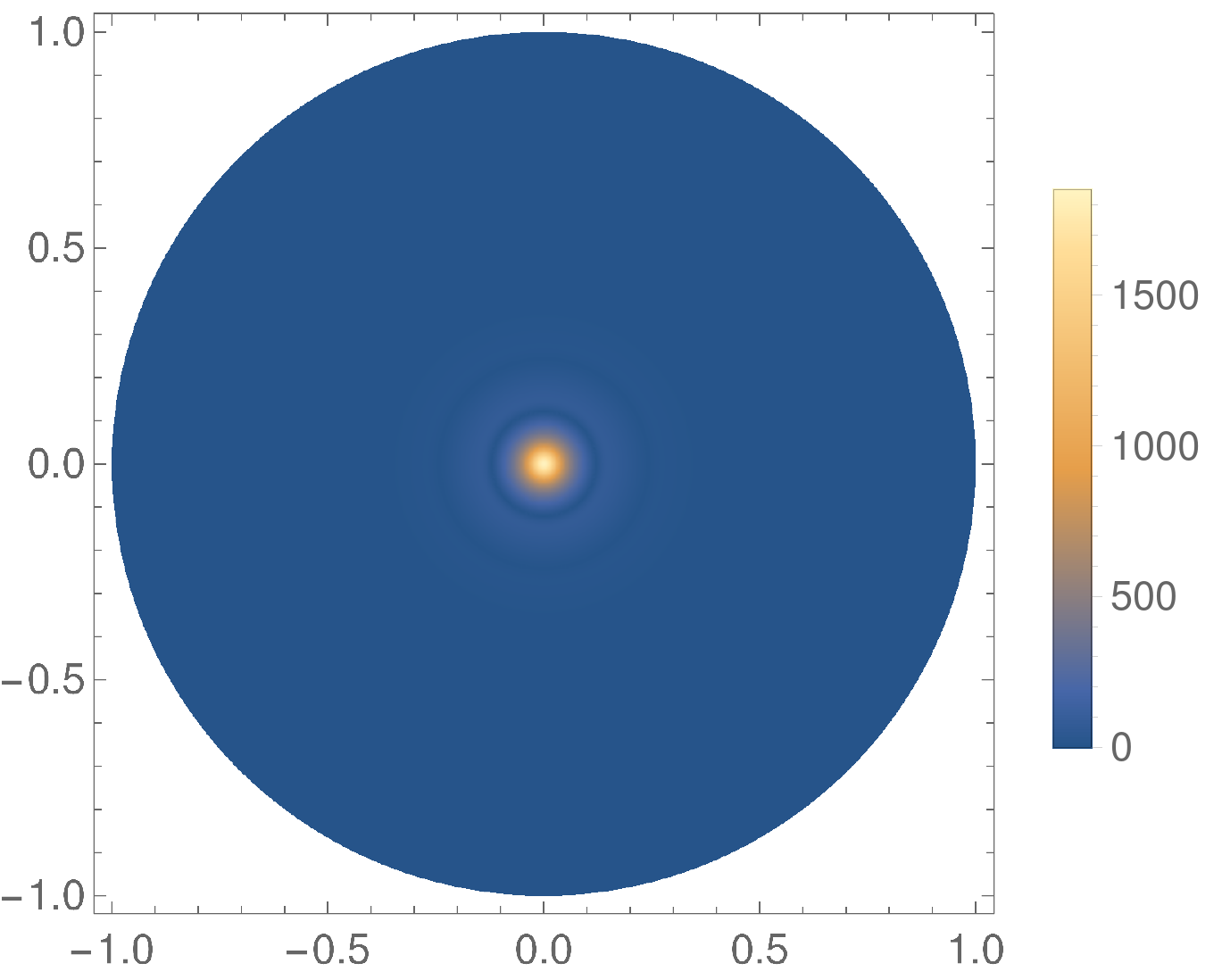}
\subcaption{\(n=16,~\omega \approx 51.8 \), \(\|\hat{u}_{0,\textbf{0}}Y_{0,\textbf{0}}\|_{\infty}\approx 1850\)}
\label{subfig:critical14}
\end{subfigure}
}\caption{Examples for a localisation with $m=0$ (i.e. the solution is radial)
and $n$ jumps. The absolute value $|\hat{u}_{0,\mathbf{0}}(r)Y_{0,\mathbf{0}%
}(\boldsymbol\xi)|$ on the disc $B_1^3\cap\{x_{3} = 0\}$ is shown. The maximal peaks
grow exponentially in $n=O (\omega)$. For any \(n\in\mathbb{N}\), the wave speed $c$ and the frequency
$\omega$ are chosen as constructed in Lemma \ref{lem:criticalconstr} with \(c_1=1,~c_2=3\). For the boundary data we set \(\hat{g}_{(0,\mathbf{0})}=1\).}%
\label{fig:critical}%
\end{figure}

\begin{remark}
A closer examination of the recursion \eqref{eq:recursion2m0} for \(\beta_{0,\ell}\) shows that there are other configurations of \(c\) and choices of \(\omega\) that do not fit into Definition \ref{DefLocInt}, but nevertheless the corresponding solution exhibits a localisation effect. In particular if \begin{equation}
\mathrm{e}^{-\operatorname*{i}\omega\frac{h_{j}}{c_{j}}}\in \{-1,1,\I,-\I\} ,\qquad\forall1\leq
j\leq n,
\end{equation}
the recursion simplifies and the representation of the last row of the Green's operator differs only by some change of signs from \eqref{eq:Greensoscillodd}. As an example, if the phase factors are such that
\begin{align*}
\operatorname{e}^{-\operatorname*{i}\omega\frac{h_{1}}{c_{1}}}&\in \{1,-1\},\\
\operatorname{e}^{-\operatorname*{i}\omega\frac{h_{\ell}}{c_{\ell}}}&\in \{\I,-\I\}\quad\forall 2\leq\ell\leq n+1,
\end{align*}
a lower estimate of the type \eqref{eq:lowEstMGreen} still holds, if the wave speed \(c\) satisfies \(q_\ell  = (-1)^{\ell}  \left\vert q_\ell\right\vert\) (instead of \(q_\ell  = (-1)^{\ell-1}  \left\vert q_\ell\right\vert\)). Other examples may be explored in a similar way. 
\end{remark}

From Theorem \ref{thm:finalStab}, we conclue that \(\|u\|_{\mathcal{H}}\) may grow exponentially in \(\omega\). Indeed for given \(n\), the solution \(u\) of the problem \eqref{eq:contHelmholtz} with wave speed \(c\), frequency \(\omega\) as constructed in Lemma \ref{lem:criticalconstr} and boundary data $\hat{g}(\boldsymbol\xi) = \hat{g}_{(0,\mathbf{0})}Y_{(0,\mathbf{0})}(\boldsymbol{\xi})$ satisfies
\begin{align*}
\|u\|_{\mathcal{H}}^2 
&\geq \left(\frac{\omega}{c_1}\right)^2\left\vert B_1\right\vert^2 \left\Vert \cos\left(\frac{\omega}{c_1}\cdot \right) 
\mathfrak{g}_{0}\right\Vert _{L^{2}\left( \tau_1\right)  }^{2}\\
&\geq \left(\frac{\omega}{c_1}\right)^2\left\vert B_1\right\vert^2 
\left\Vert \cos\left(\frac{\omega}{c_1}\cdot \right) 
\mathfrak{g}_{0}\right\Vert _{L^{2}\left( \tau_1\right)  }^{2}\\
&= \left(\frac{\omega}{c_1}\right)^2\left\vert B_1\right\vert^2 
\left( \frac{1}{4}x_1^2 + \frac{x_1}{4}\frac{c_1}{\omega}\sin\left(2\frac{\omega x_1}{c_1}\right) + 
\left (\frac{c_1}{\omega}\right)^2 \frac{\cos\left(2\frac{\omega x_1}{c_1}\right)-1}{8}\right).
\end{align*}
The choice \(x_1 = h_1 = \frac{\pi}{2} \frac{c_1}{\omega}\) implies \(\sin\left(2\frac{\omega x_1}{c_1}\right)=0\) and \(\cos\left(2\frac{\omega x_1}{c_1}\right)=-1\). 
We recall \(A_1 =0\) and identities \eqref{Greenrep} together with \eqref{fullinteriorsysfinal}, \eqref{eq:Greensoscillodd} and \eqref{modh0j0}. Therefore, 
\begin{align*}
\|u\|_{\mathcal{H}}
&\geq  \frac{1}{2}\left\vert B_1\right\vert  
\sqrt{ \left(\frac{\pi}{2}\right)^2 - 1}= \frac{\sqrt{\pi^2-4}}{4} \left(  \frac{1+q}{1-q}\right)  ^{\lceil\frac{n}{2}\rceil}\left\vert h^{(1)}_0\left(\frac{\omega}{c_{n+1}}\right)\right\vert \frac{\omega }{c_{n+1}}\left\vert \hat{g}_{(0,\mathbf{0})}\right\vert\\
&= \frac{\sqrt{\pi^2-4}}{4}  \left(  \frac{1+q}{1-q}\right)  ^{ \lceil\frac{n}{2}\rceil }\left\vert \hat{g}_{(0,\mathbf{0})}\right\vert 
\end{align*}
is a lower bound for the energy norm of the solution of problem \eqref{eq:contHelmholtz}. Combining this with the fact that \(\omega\sim n \), implies that the estimate in Theorem \ref{thm:finalStab} is sharp with respect to the frequency \(\omega\).

\subsection{Globally Stable Solution}

\label{subse:wellex} Similarly to Sections \ref{subse:criticalex} one can
construct an example, where the entries of the Green's operator are bounded
independently of the number of jumps. Let $c$ be oscillating between two
values $0<c_{1}<c_{2}$ as chosen in the proof of Lemma \ref{lem:criticalconstr}. For fixed \(n\in \mathbb{N}\), we choose the jump
locations $x_{j}$ and the frequency $\omega$ such that
\begin{equation}
\mathrm{e}^{-\operatorname*{i}\omega\frac{h_{j}}{c_{j}}}=-1,\qquad\forall1\leq
j\leq n. \label{eq:AssPhaseFactorWell}%
\end{equation}
In order to achieve \eqref{eq:AssPhaseFactorWell} we set
\[
\omega=\pi\sum_{j=1}^{n+1}c_{j}\sim O(n),\qquad h_{j}=\frac{\pi}{\omega}%
c_{j}.
\]
Then the recursion simplifies to
\[
\beta_{0,\ell}=(-1)^{\ell},\qquad\forall1\leq\ell\leq n.
\]
Finally, this leads to
\[
\left\vert \left(  \mathbf{M}_{0}^{\operatorname{Green}}\right)  _{2\ell
-1,2n}\right\vert =1,\qquad\forall1\leq\ell\leq n,
\]
and
\[
\left\vert \left(  \mathbf{M}_{0}^{\operatorname{Green}}\right)  _{2\ell
,2n}\right\vert \leq1,\qquad\forall1\leq\ell\leq n,
\]
i.e. all entries of the Green's operator are bounded in modulus from above by
$1$.

Figure \ref{fig:wellbehaved} shows examples of solutions constructed in this way
for different values of $n$. 
\begin{figure}[th]
\begin{subfigure}{0.5\textwidth}
\includegraphics[width=\textwidth]{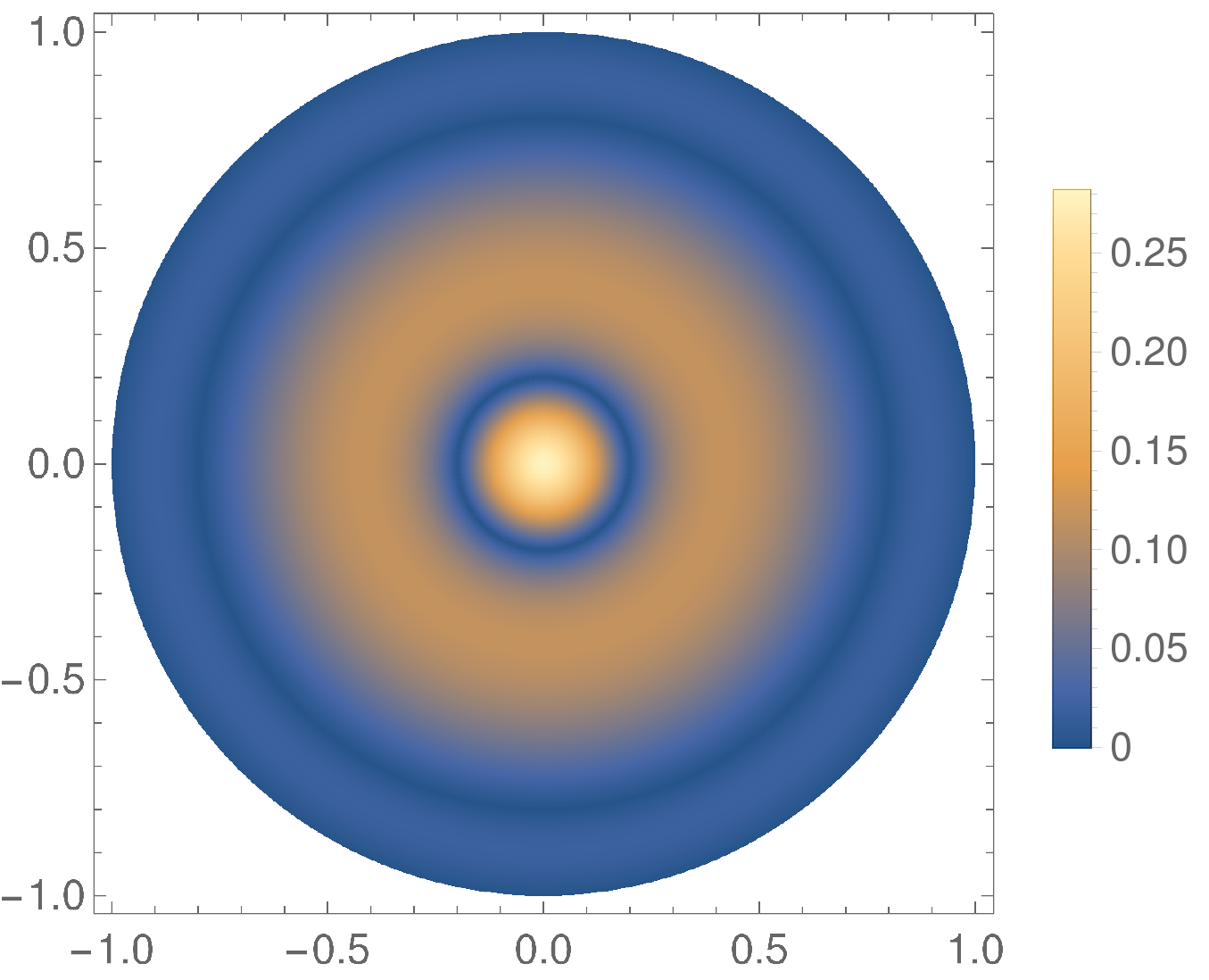}
\subcaption{\(n=2,~\omega \approx 7.85 \), \(\|\hat{u}_{0,\textbf{0}}Y_{0,\textbf{0}}\|_{\infty}\approx 0.28\)}
\end{subfigure}
\begin{subfigure}{0.5\textwidth}
\includegraphics[width=\textwidth]{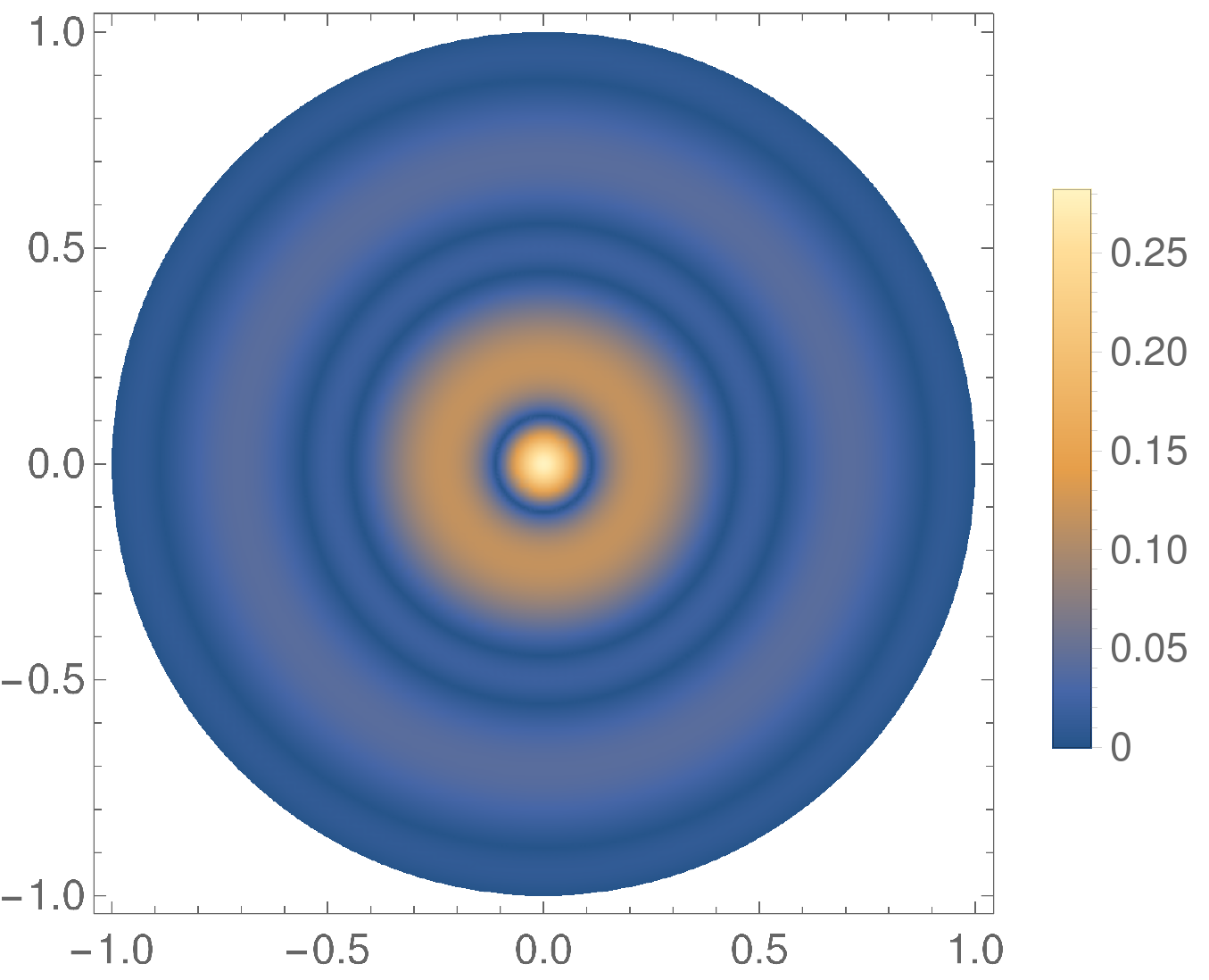}
\subcaption{\(n=4,~\omega \approx 14.1 \), \(\|\hat{u}_{0,\textbf{0}}Y_{0,\textbf{0}}\|_{\infty}\approx 0.28\)}
\end{subfigure}
\begin{subfigure}{0.5\textwidth}
\includegraphics[width=\textwidth]{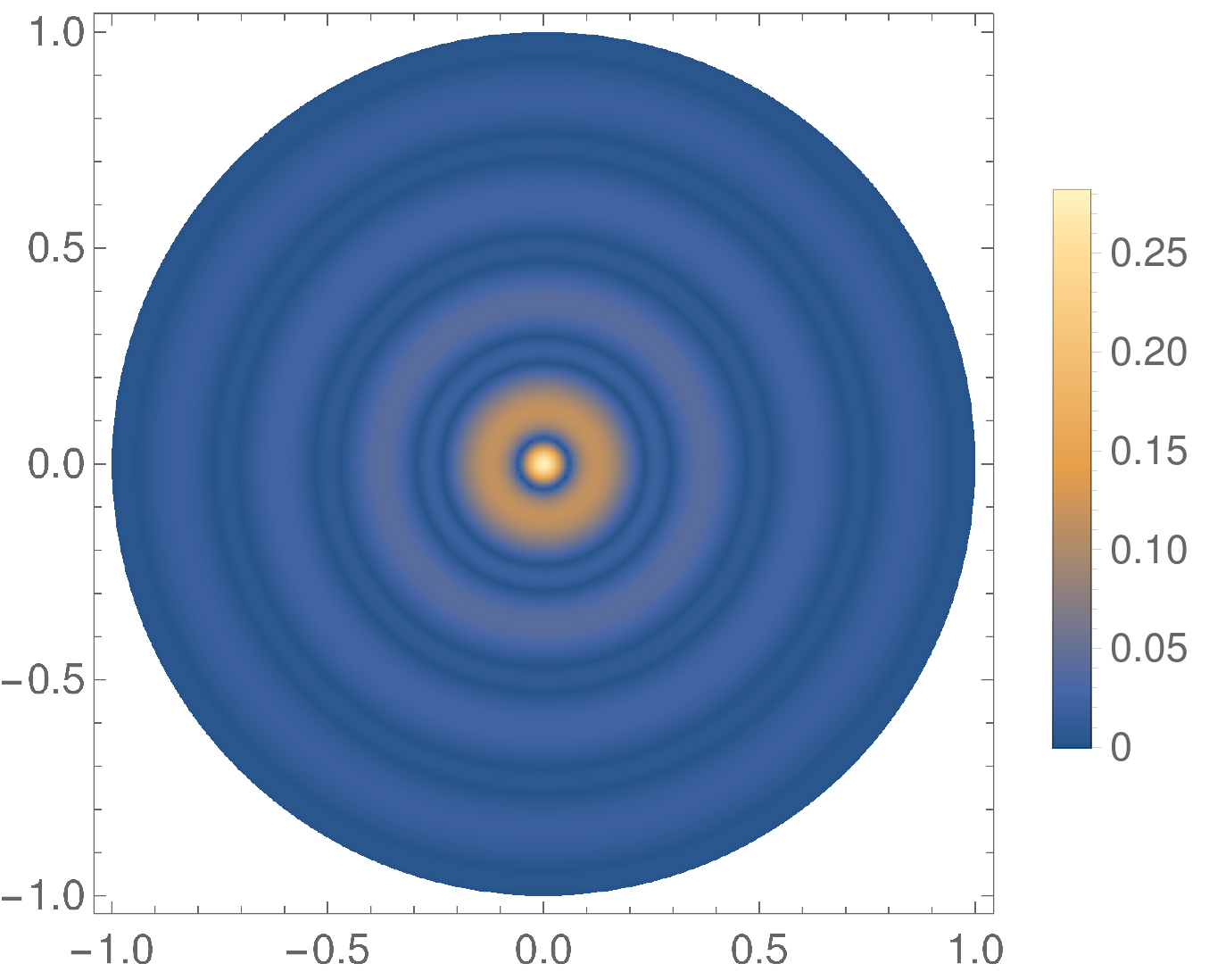}
\subcaption{\(n=18,~\omega \approx 26.7 \), \(\|\hat{u}_{0,\textbf{0}}Y_{0,\textbf{0}}\|_{\infty}\approx 0.28\)}
\end{subfigure}
\begin{subfigure}{0.5\textwidth}
\includegraphics[width=\textwidth]{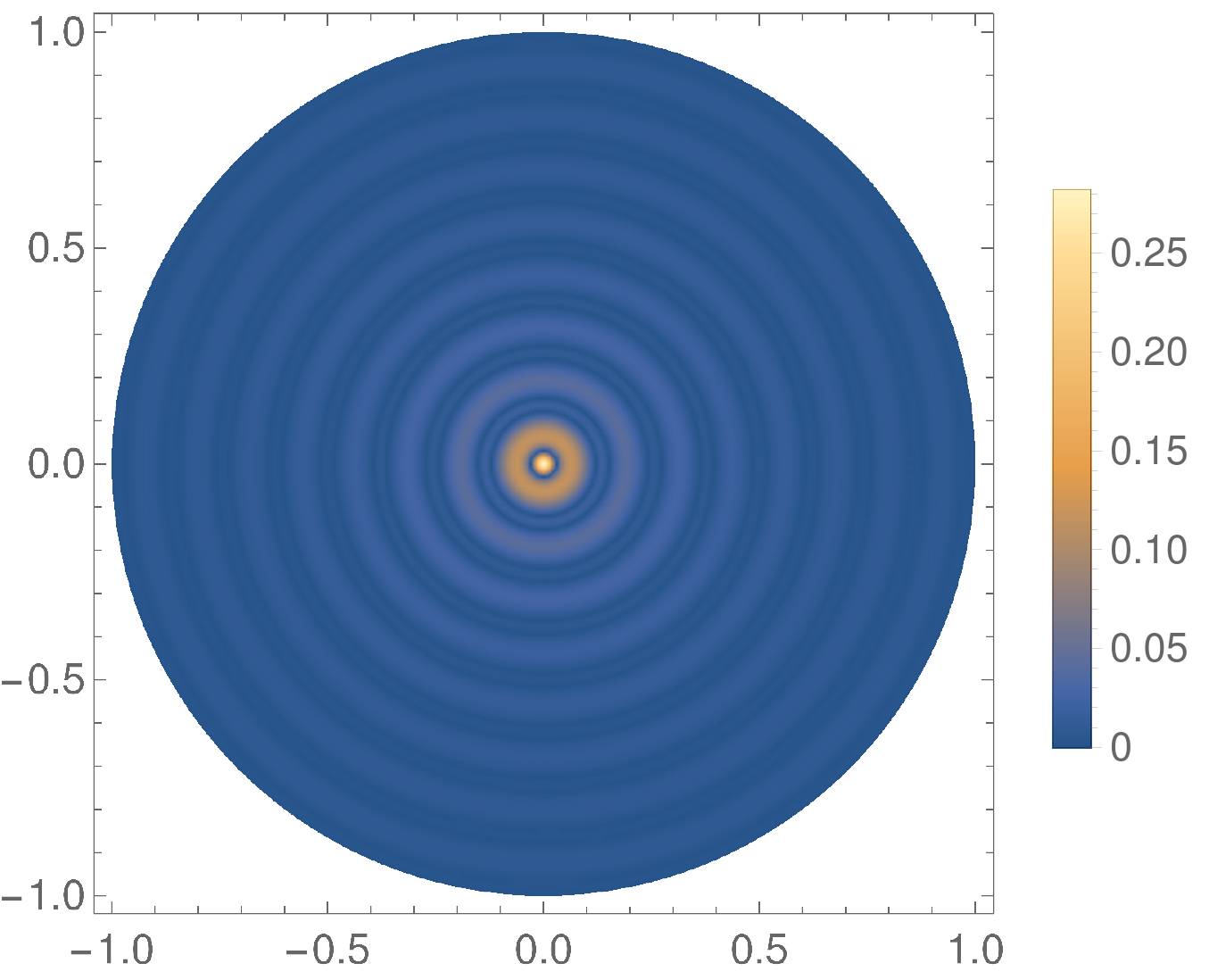}
\subcaption{\(n=16,~\omega \approx 51.8 \), \(\|\hat{u}_{0,\textbf{0}}Y_{0,\textbf{0}}\|_{\infty}\approx 0.28\)}
\label{subfig:well14}
\end{subfigure}
\caption{Examples with $m=0$ (i.e. the solution is radial) and $n$ jumps,
where the solution behaves nicely. The absolute value $|\hat{u}_{0,\mathbf{0}%
}(r)Y_{0,\mathbf{0}}(\boldsymbol\xi)|$ on the disc $B_1^3\cap\{x_{3}=0\}$ is shown. For any \(n\in\mathbb{N}\), the
wave speed $c$ and the frequency $\omega$ are chosen as constructed in Section
\ref{subse:wellex} with \(c_1=1,~c_2=3\). For the boundary data we set \(\hat{g}_{(0,\mathbf{0})}=1\).}
\label{fig:wellbehaved}
\end{figure}

\begin{remark}
Although, we have seen in Section \ref{subse:criticalex} that the stability estimate in Theorem \ref{thm:finalStab} is sharp, these examples show that the estimate can be pessimistic for some configurations of \(c\) and \(\omega\).
Moreover for fixed, large number of jumps $n$, a piecewise constant wave speed $c$
along a specific frequency $\omega$ chosen such that the wave localizes is
rare within the parameter space \eqref{defomega} together with \eqref{def:wavespeed} and can
be interpreted as an interference phenomenon. As soon as such a critical
choice is slightly perturbed (either by changing $\omega$ or $c$ (and this
includes changing a single jump point $x_{i}$)) the wave becomes typically well
behaved. The examples of Section \ref{subse:criticalex} and \ref{subse:wellex}
have identical wave speed $c$ and only differ by the frequency $\omega$.
\end{remark}

\section{Stability for Waves for the Lowest Fourier Mode $m=0$}

\label{sec:stabilitym0} In this section, we derive an estimate of the entries
of the Green's operator $\mathbf{M}_{m}^{\operatorname{Green}}$ for $m=0$,
i.e. with no angular oscillations, and $d=3$. That is, a further investigation
of the sequence $\beta_{0,\ell}$ as defined in \eqref{eq:recursion2m0} is
needed. In the first step, we will find an upper and lower bound on
$|\beta_{0,\ell}|$ which is independent of the position of the scaled jumps
$z_{\ell}=\omega x_{\ell}$ and their number $n$. In a second step, we derive a
refined estimate of $\beta_{0,\ell}$, in particular of $\operatorname{Im}%
\left(  \operatorname*{e}{}^{\operatorname*{i}\frac{z_{\ell}}{c_{\ell+1}}%
}\beta_{0,\ell}\right)  $ (cf. \eqref{eq:repMGreen}), for small values of
$z_{\ell}$. Such a refined estimate of the odd entries in the Green's
operator, i.e. the coefficients belonging to the Hankel function $h_{0}\left(
\frac{\omega x}{c_{\ell}}\right)  $ is needed, because $h_{0}^{\prime}\left(
\frac{\omega x}{c_{\ell}}\right)  $ has a singularity of order $O\left(
x^{-2}\right)  $ at $x=0$ (Lemma \ref{lem:hankelpiecewise}).

Throughout this section, we always assume that the wave speed $c$ oscillates
between two values, i.e.
\begin{equation}
q_{\ell}=\left(  -1\right)  ^{\ell+1}q\quad\text{with }-1<q=\frac{c_{2}-c_{1}%
}{c_{2}+c_{1}}<1, \label{propql}%
\end{equation}
for some $c_{\min}\leq c_{1},c_{2}\leq c_{\max}$. Also, we define
\begin{equation}
\delta_{\ell}:=\frac{\omega h_{\ell}}{c_{\ell}}. \label{defdeltaell}%
\end{equation}

\subsection{Maximal Growth of the Entries of the Green's Operator}

The goal of this section is a lower and upper estimate on the sequence
$\beta_{0,\ell}$ as stated in Proposition \ref{th:uplowbeta}.

\begin{proposition}
\label{th:uplowbeta}
It holds
\begin{align*}
\left\vert \beta_{0,\ell}\right\vert  &  \leq\alpha^{\omega},\\
\left\vert \beta_{0,\ell}\right\vert  &  \geq\alpha^{-\omega},
\end{align*}
for some $\alpha=\alpha(q)>1$ for $q\in\left(  -1,1\right)  $.
\end{proposition}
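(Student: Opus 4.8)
The plan is to analyse the recursion \eqref{eq:recursion2m0} for $\beta_{0,\ell}$ directly, in the equivalent linear form of Remark \ref{RemLinRec} specialised to $d=3$, $m=0$. Writing $\beta_{0,\ell}=\beta_{0,\ell}^{R}+\operatorname*{i}\beta_{0,\ell}^{I}$ and using the simplifications $q_{0,\ell}=(-1)^{\ell+1}q$ and $2\operatorname*{i}w_{0,k+1,k+1,k}^{1,2}/\gamma_{0,k}^{+}=1+q_{k}$, the map $(\beta_{0,\ell-1}^{R},\beta_{0,\ell-1}^{I})\mapsto(\beta_{0,\ell}^{R},\beta_{0,\ell}^{I})$ becomes multiplication by a real $2\times2$ matrix $\mathbf{N}_{\ell}$ whose entries depend only on $q$ and on the phase $\delta_{\ell}=\omega h_{\ell}/c_{\ell}$ (see \eqref{defdeltaell}). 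Explicitly, since $\operatorname{e}^{-\operatorname*{i}\delta_{\ell}}\beta_{0,\ell-1}$ contributes its conjugate weighted by $q_{\ell}$, one gets
\[
\mathbf{N}_{\ell}=\frac{1}{1+q_{\ell}}\left[
\begin{array}
[c]{rr}
(1+q_{\ell})\cos\delta_{\ell} & (1-q_{\ell})\sin\delta_{\ell}\\
-(1-q_{\ell})\sin\delta_{\ell} & (1+q_{\ell})\cos\delta_{\ell}
\end{array}
\right],
\]
so that $\beta_{0,\ell}$ corresponds to $\mathbf{N}_{\ell}\mathbf{N}_{\ell-1}\cdots\mathbf{N}_{1}(1,0)^{\intercal}$. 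Here $|q_{\ell}|=|q|<1$ for all $\ell$.

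The key observation is that $\det\mathbf{N}_{\ell}=\frac{(1+q_{\ell})^{2}\cos^{2}\delta_{\ell}+(1-q_{\ell}^{2})\sin^{2}\delta_{\ell}}{(1+q_{\ell})^{2}}=\cos^{2}\delta_{\ell}+\frac{1-q_{\ell}}{1+q_{\ell}}\sin^{2}\delta_{\ell}$, which lies between $\frac{1-|q|}{1+|q|}$ and $\frac{1+|q|}{1-|q|}$, hence is bounded above and below by constants depending only on $q$. Next I would bound the operator norm of each factor: $\|\mathbf{N}_{\ell}\|\le |\cos\delta_{\ell}|+\frac{1-q_{\ell}}{1+q_{\ell}}|\sin\delta_{\ell}|\le 1+\frac{2|q|}{1-|q|}=:c_{*}(q)$, and similarly $\|\mathbf{N}_{\ell}^{-1}\|\le c_{*}(q)$ using $\mathbf{N}_{\ell}^{-1}=\frac{1}{\det\mathbf{N}_{\ell}}\operatorname{adj}\mathbf{N}_{\ell}$ together with the determinant bound. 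Thus after $\ell$ steps $|\beta_{0,\ell}|\le c_{*}(q)^{\ell}$ and $|\beta_{0,\ell}|\ge c_{*}(q)^{-\ell}$ (the lower bound because $|\beta_{0,\ell-1}|=|\mathbf{N}_{\ell}^{-1}\beta_{0,\ell}|\le\|\mathbf{N}_{\ell}^{-1}\|\,|\beta_{0,\ell}|$, and $\beta_{0,0}=1$). The final step is to convert the exponent $\ell$ into one proportional to $\omega$: since $\sum_{\ell}h_{\ell}=1$, $x_{\ell}\le 1$, and $\delta_{\ell}=\omega h_{\ell}/c_{\ell}$, one might naively fear $n$ is unrelated to $\omega$; however, the bounds above are in fact uniform in the $\delta_{\ell}$, so the exponent to be controlled is $n$, not $\omega$ — and here I would use that only phases matter modulo the structure, so one should instead group consecutive intervals. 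The clean way: note $\|\mathbf{N}_{\ell}\|\le c_{*}(q)$ but also $\|\mathbf{N}_{\ell_2}\mathbf{N}_{\ell_1}\|$ over a \emph{pair} gains a uniform factor only when $\delta_{\ell}$ is bounded away from multiples of $\pi$, while when some $\delta_{\ell}$ is small, $\mathbf{N}_{\ell}$ is close to a rotation/identity and contributes essentially nothing; a telescoping argument over the partition of $(0,1)$ into unit-$\delta$ blocks shows the number of "effective" factors is $O(\omega)$, giving $|\beta_{0,\ell}|\le \alpha^{\omega}$ with $\alpha=\alpha(q)>1$.

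The main obstacle is exactly this last conversion: naively one only gets $|\beta_{0,\ell}|\le c_{*}(q)^{n}$ with $n$ the number of jumps, and $n$ can be arbitrarily large independently of $\omega$ — yet the claimed bound is $\alpha^{\omega}$. The resolution must exploit that when $\delta_{\ell}=\omega h_{\ell}/c_{\ell}$ is small (which is forced for many $\ell$ once $n\gg\omega$, because $\sum h_{\ell}\le 1$ and so $\sum\delta_{\ell}\le\omega/c_{\min}$), the matrix $\mathbf{N}_{\ell}$ is within $O(\delta_{\ell})$ of the rotation $R(\delta_{\ell})$ by angle $-\delta_{\ell}$, whose products have norm $1$; so $\|\mathbf{N}_{\ell}\|\le 1+C_{q}\delta_{\ell}\le\operatorname{e}^{C_{q}\delta_{\ell}}$, and multiplying over all $\ell$ gives $|\beta_{0,\ell}|\le\prod_{k=1}^{\ell}\operatorname{e}^{C_{q}\delta_{k}}=\operatorname{e}^{C_{q}\sum_{k\le\ell}\delta_{k}}\le\operatorname{e}^{C_{q}\omega/c_{\min}}=\alpha^{\omega}$ with $\alpha=\operatorname{e}^{C_{q}/c_{\min}}>1$. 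The lower bound follows identically from $\|\mathbf{N}_{\ell}^{-1}\|\le\operatorname{e}^{C_{q}\delta_{\ell}}$, which in turn uses the determinant lower bound $\det\mathbf{N}_{\ell}\ge 1-C_{q}'\delta_{\ell}^{2}$ near $\delta_{\ell}=0$ combined with $\|\operatorname{adj}\mathbf{N}_{\ell}\|=\|\mathbf{N}_{\ell}\|\le\operatorname{e}^{C_{q}\delta_{\ell}}$. So the crux is the sharp perturbative estimate $\|\mathbf{N}_{\ell}^{\pm1}\|\le 1+C_{q}\delta_{\ell}+O(\delta_{\ell}^{2})$, after which the uniform bound $\|\mathbf{N}_{\ell}^{\pm1}\|\le c_{*}(q)$ handles the finitely many $\ell$ (at most $O(\omega)$ of them) with $\delta_{\ell}$ not small, and the sum $\sum\delta_{\ell}\le\omega/c_{\min}$ closes the argument.
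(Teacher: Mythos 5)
Your high-level strategy is sound and you correctly diagnose the central obstacle (converting an exponent of size $n$ into one of size $\omega$ via $\sum_{\ell}\delta_{\ell}\lesssim\omega/c_{\min}$), but there is a decisive gap in the key perturbative claim. First, the matrix $\mathbf{N}_{\ell}$ is miscomputed. Separating \eqref{eq:recursion2m0} into real and imaginary parts gives
\[
\mathbf{N}_{\ell}=
\begin{bmatrix}
\cos\delta_{\ell} & \sin\delta_{\ell}\\[2pt]
-\dfrac{1-q_{\ell}}{1+q_{\ell}}\sin\delta_{\ell} & \dfrac{1-q_{\ell}}{1+q_{\ell}}\cos\delta_{\ell}
\end{bmatrix},
\]
not the matrix you wrote, whose two diagonal entries are both $\cos\delta_{\ell}$. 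In particular at $\delta_{\ell}=0$ your $\mathbf{N}_{\ell}$ is the identity, whereas the correct one is $\operatorname{diag}\left(1,\frac{1-q_{\ell}}{1+q_{\ell}}\right)$. This matters, because it falsifies the claim on which your whole telescoping rests: when $q_{\ell}<0$ (which happens at every other step, by \eqref{propql}), $\left\Vert\mathbf{N}_{\ell}\right\Vert\geq\frac{1-q_{\ell}}{1+q_{\ell}}=\frac{1+\left\vert q\right\vert}{1-\left\vert q\right\vert}>1$ uniformly in $\delta_{\ell}$, so the estimate $\left\Vert\mathbf{N}_{\ell}\right\Vert\leq 1+C_{q}\delta_{\ell}$ cannot hold. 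Concretely, with $\delta_{\ell}=0$, $q_{\ell}<0$ and $\beta_{0,\ell-1}=\operatorname{i}$ one gets $\left\vert\beta_{0,\ell}\right\vert=\frac{1+\left\vert q\right\vert}{1-\left\vert q\right\vert}>1$. Your $\prod_{k}\operatorname{e}^{C_{q}\delta_{k}}$ majorant therefore fails already at the first growth step, and the uniform bound $c_{*}(q)^{n}$ you fall back on is genuinely of order $\alpha^{n}$, not $\alpha^{\omega}$.

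The missing idea — and the one the paper uses — is to pair consecutive factors. Because $q_{\ell}$ alternates sign, the two-step product $\mathbf{N}_{\ell}\mathbf{N}_{\ell-1}$ \emph{does} reduce to the identity at $\delta_{\ell-1}=\delta_{\ell}=0$: the diagonal factor $\frac{1-q_{\ell}}{1+q_{\ell}}\cdot\frac{1-q_{\ell-1}}{1+q_{\ell-1}}=1$ cancels exactly. The paper implements this by deriving the double-step recursion \eqref{doublerec1} for $\beta_{0,\ell}$ in terms of $\beta_{0,\ell-2}$, from which it obtains $\left\vert\beta_{0,\ell}\right\vert^{2}\leq\left\vert\beta_{0,\ell-2}\right\vert^{2}\left(1+\frac{C_{0}\left\vert q\right\vert}{(1-q^{2})^{2}}\min\{\delta_{\ell},1\}\right)$ — an estimate whose perturbation is genuinely $O(\delta_{\ell})$ — and then splits indices into those with $\delta$ small (summable, giving the exponential in $\sum\delta\lesssim\omega$) and those with $\delta$ bounded below (at most $O(\omega)$ many, because $\sum h_{\ell}=1$). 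So the architecture you propose (perturbative product bound plus a budget $\sum\delta\lesssim\omega$) is right, but it must be carried out on double steps, and the single-step norm bound you assert is false. Once you replace $\mathbf{N}_{\ell}$ by the paired matrix $\mathbf{N}_{2\ell}\mathbf{N}_{2\ell-1}$ (and verify the analogous facts for the inverse to get the lower bound), your approach recovers the paper's argument in matrix form.
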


\begin{proof}
From \eqref{eq:recursion2m0} and \eqref{propql} we obtain
%
\[
\left\vert \beta_{0,\ell}\right\vert ^{2}=\dfrac{1+2q_{\ell}\operatorname{Re}%
\left(  \operatorname{e}^{-2\operatorname*{i}\delta_{\ell}}\frac{\beta
_{0,\ell-1}^{2}}{\left\vert \beta_{0,\ell-1}\right\vert ^{2}}\right)
+q_{\ell}^{2}}{\left(  1+q_{\ell}\right)  ^{2}}\left\vert \beta_{0,\ell
-1}\right\vert ^{2}.
\]
This leads to%
\begin{equation}
\frac{1-\left\vert q_{\ell}\right\vert }{1+q_{\ell}}\left\vert \beta
_{0,\ell-1}\right\vert \leq\left\vert \beta_{0,\ell}\right\vert \leq
\frac{1+\left\vert q_{\ell}\right\vert }{1+q_{\ell}}\left\vert \beta
_{0,\ell-1}\right\vert . \label{onerec}%
\end{equation}
Next, we express $\beta_{0,\ell}$ in terms of $\beta_{0,\ell-2}$ for $\ell
\geq2$:%
\begin{equation}
\beta_{0,\ell}:=\frac{\operatorname{e}^{-\operatorname*{i}\left(  \delta
_{\ell-1}+\delta_{\ell}\right)  }}{1-q^{2}}\left(  \left(  1-q^{2}%
\operatorname{e}^{2\operatorname*{i}\delta_{\ell}}\right)  \beta_{0,\ell
-2}+\left(  -1\right)  ^{\ell}q\left(  1-\operatorname{e}^{2\operatorname*{i}%
\delta_{\ell}}\right)  \operatorname{e}^{2\operatorname*{i}\delta_{\ell-1}%
}\overline{\beta_{0,\ell-2}}\right)  \label{doublerec1}%
\end{equation}
and for the square of the modulus:%
\begin{align*}
\left\vert \beta_{0,\ell}\right\vert ^{2}=  &  \frac{\left\vert \beta
_{0,\ell-2}\right\vert ^{2}}{\left(  1-q^{2}\right)  ^{2}}\left(  \left(
1+2q^{2}+q^{4}-4q^{2}\operatorname{Re}\operatorname{e}^{2\operatorname*{i}%
\delta_{\ell}}\right)  \right. \\
&  \left.  +2\left(  -1\right)  ^{\ell}q\operatorname{Re}\left(  \left(
1-q^{2}\operatorname{e}^{2\operatorname*{i}\delta_{\ell}}\right)  \left(
1-\operatorname{e}^{-2\operatorname*{i}\delta_{\ell}}\right)  \operatorname{e}%
^{-2\operatorname*{i}\delta_{\ell-1}}\frac{\beta_{0,\ell-2}^{2}}{\left\vert
\beta_{0,\ell-2}\right\vert ^{2}}\right)  \right) \\
=  &  \frac{\left\vert \beta_{0,\ell-2}\right\vert ^{2}}{\left(
1-q^{2}\right)  ^{2}}\left(  \left(  1+2q^{2}+q^{4}-4q^{2}\operatorname{Re}%
\operatorname{e}^{2\operatorname*{i}\delta_{\ell}}\right)  \right. \\
&  \left.  +2\left(  -1\right)  ^{\ell}q\left(  \left(  1+q^{2}\right)
\left(  1-\operatorname{Re}\operatorname{e}^{2\operatorname*{i}\delta_{\ell}%
}\right)  \right)  \operatorname{Re}\left(  \operatorname{e}%
^{-2\operatorname*{i}\delta_{\ell-1}}\frac{\beta_{0,\ell-2}^{2}}{\left\vert
\beta_{0,\ell-2}\right\vert ^{2}}\right)  \right) \\
&  \left.  +2\left(  -1\right)  ^{\ell}q\left(  1-q^{2}\right)  \left(
\operatorname{Im}\operatorname{e}^{2\operatorname*{i}\delta_{\ell}}\right)
\operatorname{Im}\left(  \operatorname{e}^{-2\operatorname*{i}\delta_{\ell-1}%
}\frac{\beta_{0,\ell-2}^{2}}{\left\vert \beta_{0,\ell-2}\right\vert ^{2}%
}\right)  \right)
\end{align*}
Note that for $\delta_{\ell}\geq0$, it holds
\begin{align*}
-\operatorname{Re}\operatorname{e}^{2\operatorname*{i}\delta_{\ell}}  &
=-\cos(2\delta_{\ell})\leq-1+2\min\left\{  \delta_{\ell}^{2},1\right\}  ,\\
\operatorname{Im}\operatorname{e}^{2\operatorname*{i}\delta_{\ell}}  &
=\sin(2\delta_{\ell})\leq\min\left\{  2\delta_{\ell},1\right\}
\end{align*}
so that%
\begin{align*}
\left\vert \beta_{0,\ell}\right\vert ^{2}\leq &  \frac{\left\vert
\beta_{0,\ell-2}\right\vert ^{2}}{\left(  1-q^{2}\right)  ^{2}}\Bigg((1-q^{2}%
)^{2}+8q^{2}\min\left\{  \delta_{\ell}^{2},1\right\}  +4|q|\left(
1+q^{2}\right)  \min\left\{  \delta_{\ell}^{2},1\right\} \\
+  &  2|q|\left(  1-q^{2}\right)  \min\left\{  2\delta_{\ell},1\right\}
\Bigg)\\
\leq &  \left\vert \beta_{0,\ell-2}\right\vert ^{2}\left(  1+\frac{C_{0}%
|q|}{(1-q^{2})^{2}}\min\left\{  \delta_{\ell},1\right\}  \right)
\end{align*}
for $C_{0}=20$. We consider the majorant
\[
\left\vert \beta_{0,2\ell}\right\vert ^{2}\leq\rho_{\ell}^{2}((\delta
_{2k})_{k=1}^{\ell}):=\prod_{k=1}^{\ell}\left(  1+\frac{C_{0}|q|}%
{(1-q^{2})^{2}}\min\left\{  \delta_{2k},1\right\}  \right)  .
\]
For a fixed $0<\gamma<1$, define the two disjoint subsets of indices
\begin{align*}
I_{h}  &  =\left\{  j\in{1,...,\ell}~|~\delta_{j}\leq\gamma\right\} \\
I_{H}  &  =\left\{  j\in{1,...,\ell}~|~\delta_{j}>\gamma\right\}  .
\end{align*}
Then we can write (with $\left\vert I_{H}\right\vert $ denoting the
cardinality of $I_{H}$)%
\[
\prod_{k=1}^{\ell}\left(  1+\frac{C_{0}|q|}{(1-q^{2})^{2}}\min\left\{
\delta_{2k},1\right\}  \right)  =\left(  1+\frac{C_{0}|q|}{(1-q^{2})^{2}%
}\right)  ^{|I_{H}|}\prod_{k\in I_{h}}\left(  1+\frac{C_{0}|q|}{(1-q^{2})^{2}%
}\delta_{2k}\right)
\]
and treat the two factors differently. The second factor can be treated with
the techniques used in the proof of \cite[Proposition 19]{SauterTorres2018}.
That is, we maximize
\begin{align}\label{eq:factor2}
\prod_{k\in I_{h}}\left(  1+\frac{C_{0}|q|}{(1-q^{2})^{2}}\delta_{2k}\right)
\end{align}
under the restriction $\sum_{k\in I_{h}}\delta_{2k}\leq s$. The restriction
comes from the fact that
\[
\sum_{k\in I_{h}}\delta_{2k}\leq\sum_{k=1}^{2\ell}2\omega\frac{h_{k}}{c_{k}%
}\leq2\frac{\omega}{c_{\min}}\sum_{k=1}^{2\ell}h_{k}\leq2\frac{\omega}%
{c_{\min}}=:s.
\]
The same analysis as in the proof of \cite[Proposition 19]{SauterTorres2018},
shows that there are three possible candidates for the maximizers of \eqref{eq:factor2}:

\begin{enumerate}
[(a)]

\item if $|I_{h}| \leq\frac{s}{\gamma}$: Let $\delta_{2k} = \gamma$ for all
$k\in I_{h}$. Then we have
\begin{align*}
\prod_{k\in I_{h}}\left(  1+\frac{C_{0}|q|}{(1-q^{2})^{2}}\delta_{2k}\right)
&  = \left(  1+\frac{C_{0}|q|}{(1-q^{2})^{2}}\gamma\right)  ^{|I_{h}|}
\leq\left(  \left(  1+\frac{C_{0}|q|}{(1-q^{2})^{2}}\gamma\right)  ^{\frac
{1}{\gamma}}\right)  ^{s}\\
&  \leq\exp\left(  \frac{C_{0}|q|}{(1-q^{2})^{2}} \right)  ^{s}%
\end{align*}

\item if $|I_{h}|>\frac{s}{\gamma}$: Let
\begin{align*}
\delta_{2k_{j}}  &  =\gamma\qquad\qquad\qquad\text{for }1\leq j\leq L-1,\\
\delta_{2k_{L}}  &  =s-(L-1)\gamma,\\
\delta_{2k_{j}}  &  =0\qquad\qquad\qquad\text{for }L+1\leq j\leq|I_{h}|,
\end{align*}
where $L=\min\{\ell\in\mathbb{N}\mid\ell\gamma\geq s\}$ and $I_{h}%
=\{k_{1},\dots,k_{|I_{h}|}\}$. Then we use the fact that $L\leq\frac{s}%
{\gamma}+1$ and compute
\begin{align*}
\prod_{k\in I_{h}}\left(  1+\frac{C_{0}|q|}{(1-q^{2})^{2}}\delta_{2k}\right)
&  =\left(  1+\frac{C_{0}|q|}{(1-q^{2})^{2}}\gamma\right)  ^{L}\leq2\left(
\left(  1+\frac{C_{0}|q|}{(1-q^{2})^{2}}\gamma\right)  ^{\frac{1}{\gamma}%
}\right)  ^{s}\\
&  \leq2\exp\left(  \frac{C_{0}|q|}{(1-q^{2})^{2}}\right)  ^{s}.
\end{align*}

\item if $|I_{h}|>\frac{s}{\gamma}$: Let $\delta_{2k}=\frac{s}{|I_{h}|}$ for
all $k\in I_{h}$. In this case
\[
\prod_{k\in I_{h}}\left(  1+\frac{C_{0}|q|}{(1-q^{2})^{2}}\delta_{2k}\right)
\leq\left(  1+\frac{C_{0}|q|}{(1-q^{2})^{2}}\frac{s}{|I_{h}|}\right)
^{|I_{h}|}\leq\exp\left(  \frac{C_{0}|q|}{(1-q^{2})^{2}}\right)  ^{s}.
\]

\end{enumerate}

Since $s\leq2\frac{\omega}{c_{\min}}$ all choices lead to the estimate
\[
\prod_{k\in I_{h}}\left(  1+\frac{C_{0}|q|}{(1-q^{2})^{2}}\delta_{2k}\right)
\leq\tilde{\alpha}_{q}^{\omega}%
\]
for some $\tilde{\alpha}_{q}>1$ depending on $C_{0}$. For the first factor, we
note that
\[
\delta_{j}\geq\gamma\iff h_{j}\geq\gamma\frac{c_{j}}{\omega}.
\]
This means that $|I_{H}|\leq\frac{\omega}{\gamma c_{\min}}$ and therefore%
\begin{equation}
\left(  1+\frac{C_{0}|q|}{(1-q^{2})^{2}}\right)  ^{|I_{H}|}\leq\tilde{\alpha
}_{q}^{\omega}, \label{upperestdoublerec}%
\end{equation}
for some $\tilde{\alpha}_{q}>1$ depending on $C_{0}$. The lower bound can be
treated similarly as the upper bound. First, we observe that
\[
\left\vert \beta_{0,2\ell}\right\vert ^{2}\geq\left\vert \beta_{0,2\ell
-2}\right\vert ^{2}\left(  1-\frac{C_{0}|q|}{(1-q^{2})^{2}}\min\left\{
\delta_{2\ell},1\right\}  \right)  .
\]
From \eqref{onerec} we get by using \eqref{propql}
\[
\left\vert \beta_{0,2\ell}\right\vert \geq\frac{1-|q_{\ell}|}{1+q_{\ell}%
}\left\vert \beta_{0,2\ell-1}\right\vert \geq\frac{1-|q|}{1+|q|}\left\vert
\beta_{0,2\ell-2}\right\vert >0.
\]
This leads to
\[
\left\vert \beta_{0,2\ell}\right\vert ^{2}\geq\left\vert \beta_{0,2\ell
-2}\right\vert ^{2}\max\left\{  \left(  1-\frac{C_{0}|q|}{(1-q^{2})^{2}}%
\min\left\{  \delta_{2\ell},1\right\}  \right)  ,\left(  \frac{1-|q|}%
{1+|q|}\right)  ^{2}\right\}  .
\]
We set $\min\left\{  \frac{(1-q^{2})^{2}}{C_{0}|q|},1\right\}  >\gamma>0$ and
again split
\[
\left\vert \beta_{0,2\ell}\right\vert ^{2}\geq\left(  \frac{1-|q|}%
{1+|q|}\right)  ^{4|I_{H}|}\prod_{k\in I_{h}}\left(  1-\frac{C_{0}%
|q|}{(1-q^{2})^{2}}\delta_{2k}\right)  .
\]
The estimate of the first factor follows by using the same arguments as for
\eqref{upperestdoublerec}. For the second one, we use that $\frac{C_{0}%
|q|}{(1-q^{2})^{2}}\delta_{2k}<1$ for $k\in I_{h}$ and repeat the proof for
the upper bound by the same symmetry and opposite monotonicity property.
Finally, the estimate of $\left\vert \beta_{0,2\ell-1}\right\vert $ for odd
indices follows easily by using \eqref{onerec}.
\end{proof}

\begin{remark}
We note that the constant $\alpha_{q}>1$ only depends on $\left\vert q\right\vert\in\left(
0,1\right)  $ (however, in a continuous way). As $\left\vert q\right\vert
\rightarrow1$ (which implies $c_{\max}\rightarrow\infty$ or $c_{\min
}\rightarrow0$) the constant $\alpha_{q}$ may tend to $\infty$. We emphasize
that we do not need any periodicity on the wave speed $c$, and that the
estimate is independent of the number of jumps $n$ in $c$ or the positions
$x_{\ell}$ of the jumps.
\end{remark}

\subsection{A Refined Estimation for the Coefficients of the Hankel Function
Close to the Origin}

\label{subsec:refinedestimate} In this chapter, we establish a refined
estimate of $\beta_{0,\ell}$ for small $\frac{\omega x_{\ell}}%
{c_{\ell}}$ for reasons explained in Remark \ref{RemSing}. In this section, we
only consider indices $\ell$ such that%
\begin{equation}
\frac{\omega x_{\ell}}{c_{\min}}<\frac{1}{4C_{1}}\quad\text{for\quad}%
C_{1}:=\frac{\max\{1,c_{\max}\}}{c_{\min}^{2}} \label{defC1}%
\end{equation}
(otherwise, the singular behaviour of the Hankel function at the origin is
non-critical in the sense that all constants, in general, may depend on
$c_{\min}$ and $c_{\max}$).
Let $L$ be the maximal index such that $\frac{\omega x_{\ell}}{c_{\min}%
}<\left(  4C_{1}\right)  ^{-1}$ holds. In view of \eqref{GreenFunRep}, our
goal is to find an estimate of the term
\[
\operatorname{Im}\left(  \mathrm{e}^{\operatorname*{i}\frac{z_{\ell}}%
{c_{\ell+1}}}\beta_{0,\ell}\right)  =\sin\left(  \frac{z_{\ell}}{c_{\ell+1}%
}\right)  \operatorname{Re}\left(  \beta_{0,\ell}\right)  +\cos\left(
\frac{z_{\ell}}{c_{\ell+1}}\right)  \operatorname{Im}\left(  \beta_{0,\ell
}\right)  .
\]
For $2\leq\ell\leq L$ we recall the recursion for $\beta_{0,\ell}$ (cf.
\eqref{doublerec1}):
\begin{align*}
\beta_{0,\ell}=&\mathrm{e}^{-\operatorname*{i}\frac{\omega h_{\ell-1}}%
{c_{\ell-1}}}\left(  \frac{\mathrm{e}^{-\operatorname*{i}\frac{\omega h_{\ell
}}{c_{\ell}}}-q^{2}\mathrm{e}^{\operatorname*{i}\frac{\omega h_{\ell}}%
{c_{\ell}}}}{1-q^{2}}\right)  \beta_{0,\ell-2}
\\
&-(-1)^{\ell}\frac
{2\operatorname*{i}q}{1-q^{2}}\mathrm{e}^{\operatorname*{i}\frac{\omega
h_{\ell-1}}{c_{\ell-1}}}\sin\left(  \frac{\omega h_{\ell}}{c_{\ell}}\right)
\overline{\beta_{0,\ell-2}}%
\end{align*}
and split it into the real part of $\beta_{0,\ell}^{R}:=\operatorname{Re}%
\left(  \beta_{0,\ell}\right)  $ and imaginary part $\beta_{0,\ell}%
^{I}:=\operatorname{Im}\left(  \beta_{0,\ell}\right)  $ of $\beta_{0,\ell}$ .
This leads to%
\[%
\begin{pmatrix}
\beta_{0,\ell}^{R}\\
\beta_{0,\ell}^{I}%
\end{pmatrix}
=\mathbf{P}_{\ell}%
\begin{pmatrix}
\beta_{0,\ell-2}^{R}\\
\beta_{0,\ell-2}^{I}%
\end{pmatrix}
,\qquad\mathbf{P}_{\ell}:=%
\begin{bmatrix}
1+s_{\ell}\left(  \mu_{\ell}\right)  & t_{\ell}\left(  \mu_{\ell}\right) \\
-t_{\ell}\left(  \mu_{\ell}^{-1}\right)  & 1+s_{\ell}\left(  \mu_{\ell}%
^{-1}\right)
\end{bmatrix}
\]
with $\delta_{\ell}$ as in \eqref{defdeltaell} and%
\begin{align*}
\mu &  :=\frac{1-q}{1+q}=\frac{c_{1}}{c_{2}},\quad\mu_{\ell}:=\left\{
\begin{array}
[c]{ll}%
\mu & \ell\text{ is even,}\\
\mu^{-1} & \ell\text{ is odd,}%
\end{array}
\right. \\
s_{\ell}\left(  \kappa\right)   &  :=\cos\left(  \delta_{\ell-1}\right)
\cos\left(  \delta_{\ell}\right)  -1-\kappa\sin\left(  \delta_{\ell-1}\right)
\sin\left(  \delta_{\ell}\right)  ,\\
t_{\ell}\left(  \kappa\right)   &  :=\sin\left(  \delta_{\ell-1}\right)
\cos\left(  \delta_{\ell}\right)  +\kappa\cos\left(  \delta_{\ell-1}\right)
\sin\left(  \delta_{\ell}\right)  .
\end{align*}
Since $\beta_{0,0}=1$, the recursion gives for $\beta_{0,2\ell}$ for even
indices%
\begin{equation}%
\begin{pmatrix}
\beta_{0,2\ell}^{R}\\
\beta_{0,2\ell}^{I}%
\end{pmatrix}
=\mathbf{P}_{2\ell}\cdot\ldots\cdot\mathbf{P}_{4}\cdot\mathbf{P}_{2}\left(
\begin{array}
[c]{c}%
1\\
0
\end{array}
\right)  . \label{eq:betaRealImag}%
\end{equation}
The values of $\beta_{0,k}$ for odd indices $k$ are given by%
\begin{equation}%
\begin{pmatrix}
\beta_{0,2\ell+1}^{R}\\
\beta_{0,2\ell+1}^{I}%
\end{pmatrix}
=%
\begin{bmatrix}
\cos(\delta_{2\ell+1}) & \sin(\delta_{2\ell+1})\\
-\mu\sin(\delta_{2\ell+1}) & \mu\cos(\delta_{2\ell+1})
\end{bmatrix}%
\begin{pmatrix}
\beta_{0,2\ell}^{R}\\
\beta_{0,2\ell}^{I}%
\end{pmatrix}
. \label{eq:betaRealImagOdd}%
\end{equation}
At this point, we only discuss the recursion for the even values $2\ell$ of
$\beta_{0,\cdot}~$. The result for the odd values will be proved in Lemma
\ref{lem:RepBeta}. The proof of the following Lemma is straightforward by induction.

\begin{lemma}
It holds
\begin{equation}
\mathbf{P}_{2\ell}\cdot\ldots\cdot\mathbf{P}_{2}=%
\begin{bmatrix}
1 & L_{2\ell}\left(  \mu\right) \\
-L_{2\ell}\left(  \mu^{-1}\right)  & 1
\end{bmatrix}
+%
\begin{bmatrix}
S_{2\ell}\left(  \mu\right)  & T_{2\ell}\left(  \mu\right) \\
-T_{2\ell}\left(  \mu^{-1}\right)  & S_{2\ell}\left(  \mu^{-1}\right)
\end{bmatrix}
, \label{pprodlem}%
\end{equation}
with%
\[
L_{2\ell}\left(  \kappa\right)  :=\sum_{j=1}^{\ell}t_{2j}\left(
\kappa\right)  .
\]
$S_{2\ell}\left(  \kappa\right)  ,T_{2\ell}\left(  \kappa\right)  $ are given
by the recursion
\begin{equation}
S_{2}\left(  \kappa\right)  :=s_{2}\left(  \kappa\right)  ,\qquad T_{2}\left(
\kappa\right)  :=0, \label{DefS1T1}%
\end{equation}
and%
\begin{align*}
S_{2\ell+2}\left(  \kappa\right)   &  =(1+s_{2\ell+2}\left(  \kappa\right)
)S_{2\ell}\left(  \kappa\right)  +s_{2\ell+2}\left(  \kappa\right)
-t_{2\ell+2}\left(  \kappa\right)  \left(  L_{2\ell}\left(  \kappa
^{-1}\right)  +T_{2\ell}\left(  \kappa^{-1}\right)  \right)  ,\\
T_{2\ell+2}\left(  \kappa\right)   &  =(1+s_{2\ell+2}\left(  \kappa\right)
)T_{2\ell}\left(  \kappa\right)  +s_{2\ell+2}\left(  \kappa\right)  L_{2\ell
}\left(  \kappa\right)  +t_{2\ell+2}\left(  \kappa\right)  S_{2\ell}\left(
\kappa^{-1}\right)  .
\end{align*}

\end{lemma}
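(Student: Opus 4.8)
The plan is to prove \eqref{pprodlem} by induction on $\ell$. The structural observation that makes this work is that every factor in the product $\mathbf{P}_{2\ell}\cdot\ldots\cdot\mathbf{P}_{2}$ carries the \emph{same} parameter $\mu$: since each index $2j$ is even we have $\mu_{2j}=\mu$, hence
\[
\mathbf{P}_{2j}=
\begin{bmatrix}
1+s_{2j}(\mu) & t_{2j}(\mu)\\
-t_{2j}(\mu^{-1}) & 1+s_{2j}(\mu^{-1})
\end{bmatrix},
\]
which already exhibits the ``$\mu$ in the first row, $\mu^{-1}$ in the second row, opposite sign in the lower-left entry'' pattern of the claimed product. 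Before starting I would record the immediate consequence of the definition of $L$, namely $L_{2\ell+2}(\kappa)=L_{2\ell}(\kappa)+t_{2\ell+2}(\kappa)$.

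For the base case $\ell=1$ I would simply note that $\mathbf{P}_2$ equals the right-hand side of \eqref{pprodlem} because $L_2(\kappa)=t_2(\kappa)$, $S_2(\kappa)=s_2(\kappa)$ and $T_2(\kappa)=0$ by \eqref{DefS1T1}. For the induction step I would assume \eqref{pprodlem} for some $\ell\geq1$ — so that $\mathbf{P}_{2\ell}\cdot\ldots\cdot\mathbf{P}_2$ is the matrix with entries $1+S_{2\ell}(\mu)$, $L_{2\ell}(\mu)+T_{2\ell}(\mu)$, $-(L_{2\ell}(\mu^{-1})+T_{2\ell}(\mu^{-1}))$ and $1+S_{2\ell}(\mu^{-1})$ — and then left-multiply by $\mathbf{P}_{2\ell+2}$ (which again carries the parameter $\mu$) and expand the four entries. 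The $(1,1)$ entry regroups as $1+\bigl[(1+s_{2\ell+2}(\mu))S_{2\ell}(\mu)+s_{2\ell+2}(\mu)-t_{2\ell+2}(\mu)(L_{2\ell}(\mu^{-1})+T_{2\ell}(\mu^{-1}))\bigr]$, which is $1+S_{2\ell+2}(\mu)$ by the stated recursion for $S$; the $(1,2)$ entry regroups as $L_{2\ell}(\mu)+t_{2\ell+2}(\mu)+(1+s_{2\ell+2}(\mu))T_{2\ell}(\mu)+s_{2\ell+2}(\mu)L_{2\ell}(\mu)+t_{2\ell+2}(\mu)S_{2\ell}(\mu^{-1})$, which is $L_{2\ell+2}(\mu)+T_{2\ell+2}(\mu)$ after inserting $L_{2\ell+2}(\mu)=L_{2\ell}(\mu)+t_{2\ell+2}(\mu)$ and the recursion for $T$. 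The $(2,1)$ and $(2,2)$ entries follow from the very same two computations under the substitution $\mu\leftrightarrow\mu^{-1}$ together with the overall sign in the lower-left position, giving $-(L_{2\ell+2}(\mu^{-1})+T_{2\ell+2}(\mu^{-1}))$ and $1+S_{2\ell+2}(\mu^{-1})$; here one uses that $\kappa\mapsto\kappa^{-1}$ swaps $\mu$ and $\mu^{-1}$, so that the recursions for $S_{2\ell+2}$ and $T_{2\ell+2}$ at argument $\mu^{-1}$ can be read off directly.

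The computation is purely mechanical, so the only thing that requires care — the ``main obstacle'', such as it is — is bookkeeping the arguments $\mu$ versus $\mu^{-1}$ consistently. In particular one must recognise that the mixed term $-t_{2\ell+2}(\mu)(L_{2\ell}(\mu^{-1})+T_{2\ell}(\mu^{-1}))$ appearing in the $(1,1)$ entry is precisely the term $-t_{2\ell+2}(\kappa)(L_{2\ell}(\kappa^{-1})+T_{2\ell}(\kappa^{-1}))$ in the recursion for $S_{2\ell+2}(\kappa)$ at $\kappa=\mu$, and that $t_{2\ell+2}(\mu)S_{2\ell}(\mu^{-1})$ in the $(1,2)$ entry matches the $S_{2\ell}(\kappa^{-1})$ term in the recursion for $T_{2\ell+2}(\kappa)$. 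Once the arguments are tracked correctly, each of the four entries matches the claimed form termwise, which closes the induction.
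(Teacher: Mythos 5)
Your induction is correct and is exactly what the paper intends (the paper only remarks that the proof is ``straightforward by induction'' and omits the details). The base case matches \eqref{DefS1T1}, and in the induction step you correctly left-multiply by $\mathbf{P}_{2\ell+2}$, use $\mu_{2j}=\mu$ for even indices, and recover the stated recursions for $S_{2\ell+2}(\kappa)$ and $T_{2\ell+2}(\kappa)$ at $\kappa=\mu$ from the first row and at $\kappa=\mu^{-1}$ from the second row, with the swap $\kappa\mapsto\kappa^{-1}$ accounting for the cross-terms $t_{2\ell+2}(\kappa)\bigl(L_{2\ell}(\kappa^{-1})+T_{2\ell}(\kappa^{-1})\bigr)$ and $t_{2\ell+2}(\kappa)S_{2\ell}(\kappa^{-1})$.
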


We first consider the first term on the right hand side of \eqref{pprodlem}.
The following lemma will be useful.

\begin{lemma}
\label{lem:defRl2} For a constant $C>0$ only depending on $c_{\min},c_{\max}$
the following holds for $1<2\ell\leq L$
\[
L_{2\ell}(\mu^{-1})=\frac{z_{2\ell}}{c_{2\ell+1}}+R_{2\ell}^{3},\qquad
|R_{2\ell}^{3}|\leq Cz_{2\ell}^{3}.
\]

\end{lemma}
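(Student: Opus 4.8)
The plan is to Taylor-expand the trigonometric factors in each summand $t_{2j}(\mu^{-1})$, recognize the linear part as a telescoping sum that collapses to $z_{2\ell}/c_{2\ell+1}$, and absorb everything else into a cubic remainder using the a priori smallness $\tfrac{\omega x_\ell}{c_{\min}}<(4C_1)^{-1}$ that is in force for $2\ell\le L$.

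First I would record the regime. Since $2\ell\le L$, for every $k\le 2\ell$ we have
\[
\delta_k=\frac{\omega h_k}{c_k}\le\frac{\omega}{c_{\min}}\sum_{i=1}^{2\ell}h_i=\frac{\omega x_{2\ell}}{c_{\min}}=\frac{z_{2\ell}}{c_{\min}}<(4C_1)^{-1}=:M,
\]
so all phases $\delta_k$ and all their partial sums are bounded by a constant $M$ depending only on $c_{\min},c_{\max}$. Next, using the globally valid remainders $|\sin\delta-\delta|\le|\delta|^3/6$ and $|\cos\delta-1|\le\delta^2/2$, together with $\delta_{2j-1},\delta_{2j}\le M$, $\mu^{-1}=c_2/c_1\le c_{\max}/c_{\min}$, and Young's inequality $ab^2\le\tfrac13a^3+\tfrac23b^3$ to homogenize the mixed terms, a routine expansion of
\[
t_{2j}(\mu^{-1})=\sin(\delta_{2j-1})\cos(\delta_{2j})+\mu^{-1}\cos(\delta_{2j-1})\sin(\delta_{2j})
\]
gives $t_{2j}(\mu^{-1})=\delta_{2j-1}+\mu^{-1}\delta_{2j}+e_{2j}$ with $|e_{2j}|\le C(\delta_{2j-1}^3+\delta_{2j}^3)$ for $C=C(c_{\min},c_{\max})$.

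The key step is the telescoping of the linear part, which uses the two-value oscillation hypothesis \eqref{propql}: the wave speed equals $c_1$ on odd-index intervals and $c_2$ on even-index intervals, so $\delta_{2j-1}=\omega h_{2j-1}/c_1$, $\delta_{2j}=\omega h_{2j}/c_2$, and since $\mu^{-1}=c_2/c_1$ we get $\delta_{2j-1}+\mu^{-1}\delta_{2j}=(\omega/c_1)(h_{2j-1}+h_{2j})$. Summing over $1\le j\le\ell$ and using $\sum_{k=1}^{2\ell}h_k=x_{2\ell}$ and $c_{2\ell+1}=c_1$ yields
\[
\sum_{j=1}^{\ell}\bigl(\delta_{2j-1}+\mu^{-1}\delta_{2j}\bigr)=\frac{\omega x_{2\ell}}{c_1}=\frac{z_{2\ell}}{c_{2\ell+1}},
\]
which is exactly the claimed main term of $L_{2\ell}(\mu^{-1})=\sum_{j=1}^{\ell}t_{2j}(\mu^{-1})$. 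Finally, setting $R_{2\ell}^3:=\sum_{j=1}^{\ell}e_{2j}$, I would bound
\[
|R_{2\ell}^3|\le C\sum_{k=1}^{2\ell}\delta_k^3\le C\Bigl(\sum_{k=1}^{2\ell}\delta_k\Bigr)^3\le C\Bigl(\frac{z_{2\ell}}{c_{\min}}\Bigr)^3,
\]
using that $\sum a_k^3\le(\sum a_k)^3$ for nonnegative reals and the bound from the first step; this gives $|R_{2\ell}^3|\le Cz_{2\ell}^3$ with $C=C(c_{\min},c_{\max})$.

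I do not expect any genuine obstacle here. The only point needing care is keeping the Taylor-remainder constants uniform in $\ell$ and $n$, which is handled by the global remainder bounds for $\sin$ and $\cos$ and by the standing bound $\delta_k<(4C_1)^{-1}$; the rest is the telescoping identity, which is the one substantive observation and hinges on the oscillating structure of $c$.
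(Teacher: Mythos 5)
Your proof is correct and fills in precisely the Taylor-expansion argument that the paper's one-line proof alludes to; the telescoping identity $\sum_{j=1}^{\ell}(\delta_{2j-1}+\mu^{-1}\delta_{2j})=\omega x_{2\ell}/c_1=z_{2\ell}/c_{2\ell+1}$, which uses the standing two-value oscillation assumption \eqref{propql}, is indeed the substantive observation and is the same mechanism the paper relies on.
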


\begin{proof}
This follows directly form the definitions of $t_{\ell}$ and $L_{2\ell}$ and
using a Taylor expansion with respect to $\delta_{1},\dots, \delta_{2\ell}$.
\end{proof}

In view of $\beta_{0,0}:=1$, i.e., $\left(  \beta_{0,0}^{R},\beta_{0,0}%
^{I}\right)  =\left(  1,0\right)  $ we observe that the first term in the
product of $P_{2k}$ (cf. \eqref{pprodlem}) applied to the initial coefficient
$\beta_{0,0}$ yields%
\begin{equation}%
\begin{bmatrix}
1 & L_{2\ell}\left(  \mu\right) \\
-L_{2\ell}\left(  \mu^{-1}\right)  & 1
\end{bmatrix}%
\begin{pmatrix}
1\\
0
\end{pmatrix}
=%
\begin{pmatrix}
1\\
-\frac{z_{2\ell}}{c_{2\ell+1}}-R_{2\ell}^{3}%
\end{pmatrix}
. \label{formprincpart}%
\end{equation}

Next, we focus on the second term of \eqref{pprodlem}. The goal is to show
that the matrix entries are of higher order than the first term with respect
to $z_{\ell}$. We start with an estimate of $s_{\ell},t_{\ell}$ and $L_{2\ell
}$.

\begin{lemma}
\label{lem:estRZ} The following estimates hold%
\begin{align*}
\left\vert s_{\ell}\left(  \kappa\right)  \right\vert  &  \leq R_{\ell}%
^{2}:=C_{1}\left(  \delta_{\ell-1}^{2}+\delta_{\ell}^{2}\right)  ,\\
\left\vert t_{\ell}\left(  \kappa\right)  \right\vert  &  \leq R_{\ell}%
^{1}:=C_{1}\left(  \delta_{\ell-1}+\delta_{\ell}\right)  ,\\
\left\vert L_{2\ell}\left(  \kappa\right)  \right\vert  &  \leq Z_{2\ell
}:=C_{1}z_{2\ell},
\end{align*}
for $\kappa\in\{\mu,\mu^{-1}\}$ and the constant $C_{1}=\frac{\max\{1,c_{\max
}\}}{c_{\min}^{2}}$ as in \eqref{defC1}. In particular, condition
\eqref{defC1} implies%
\begin{equation}
0<R_{\ell}^{1}\leq1/2. \label{estRl1cond}%
\end{equation}

\end{lemma}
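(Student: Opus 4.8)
The plan is to obtain all three estimates directly from the elementary inequalities $|\sin x|\le|x|$, $|\cos x|\le1$ and $1-\cos x\le x^{2}/2$, combined with the triangle inequality, the telescoping identity $\sum_{j=1}^{k}h_{j}=x_{k}$, and the observation that any admissible ratio $\kappa\in\{\mu,\mu^{-1}\}=\{c_{1}/c_{2},c_{2}/c_{1}\}$ satisfies $\max\{1,\kappa\}\le c_{\max}/c_{\min}$, so that both $\max\{1,\kappa\}\le C_{1}$ and $\max\{1,\kappa\}/c_{\min}\le C_{1}$ hold by the definition $C_{1}=\max\{1,c_{\max}\}/c_{\min}^{2}$ together with $c_{\min}\le c_{1},c_{2}\le c_{\max}$.

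First I would treat $t_{\ell}$. From its definition, $|t_{\ell}(\kappa)|\le|\sin\delta_{\ell-1}|\,|\cos\delta_{\ell}|+\kappa\,|\cos\delta_{\ell-1}|\,|\sin\delta_{\ell}|\le\delta_{\ell-1}+\kappa\delta_{\ell}\le\max\{1,\kappa\}(\delta_{\ell-1}+\delta_{\ell})\le C_{1}(\delta_{\ell-1}+\delta_{\ell})=R_{\ell}^{1}$. For $s_{\ell}$, write $\cos\delta_{\ell-1}\cos\delta_{\ell}-1=\cos\delta_{\ell-1}(\cos\delta_{\ell}-1)+(\cos\delta_{\ell-1}-1)$; using $1-\cos x\le x^{2}/2$ and $|\cos x|\le1$ this is bounded in modulus by $(\delta_{\ell-1}^{2}+\delta_{\ell}^{2})/2$, while $|\kappa\sin\delta_{\ell-1}\sin\delta_{\ell}|\le\kappa\,\delta_{\ell-1}\delta_{\ell}\le\kappa(\delta_{\ell-1}^{2}+\delta_{\ell}^{2})/2$ by $\delta_{\ell-1}\delta_{\ell}\le(\delta_{\ell-1}^{2}+\delta_{\ell}^{2})/2$. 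Adding the two contributions, $|s_{\ell}(\kappa)|\le\tfrac{1+\kappa}{2}(\delta_{\ell-1}^{2}+\delta_{\ell}^{2})\le\max\{1,\kappa\}(\delta_{\ell-1}^{2}+\delta_{\ell}^{2})\le C_{1}(\delta_{\ell-1}^{2}+\delta_{\ell}^{2})=R_{\ell}^{2}$.

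For $L_{2\ell}(\kappa)=\sum_{j=1}^{\ell}t_{2j}(\kappa)$ I would apply the $t$-bound termwise and regroup: $|L_{2\ell}(\kappa)|\le\sum_{j=1}^{\ell}(\delta_{2j-1}+\kappa\delta_{2j})\le\max\{1,\kappa\}\sum_{j=1}^{2\ell}\delta_{j}$. Since $\delta_{j}=\omega h_{j}/c_{j}\le(\omega/c_{\min})h_{j}$ and $\sum_{j=1}^{2\ell}h_{j}=x_{2\ell}$, this gives $|L_{2\ell}(\kappa)|\le\max\{1,\kappa\}\,\omega x_{2\ell}/c_{\min}=\max\{1,\kappa\}\,z_{2\ell}/c_{\min}\le C_{1}z_{2\ell}=Z_{2\ell}$. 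Finally, $R_{\ell}^{1}>0$ since $h_{\ell-1},h_{\ell}>0$ and $\omega>0$; for the upper bound, $h_{\ell-1}+h_{\ell}=x_{\ell}-x_{\ell-2}\le x_{\ell}$ yields $\delta_{\ell-1}+\delta_{\ell}\le\omega x_{\ell}/c_{\min}$, which is $<(4C_{1})^{-1}$ by condition \eqref{defC1}, so $R_{\ell}^{1}=C_{1}(\delta_{\ell-1}+\delta_{\ell})<1/4\le1/2$.

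The computation is entirely routine; the only point that requires a little care --- and hence the ``main obstacle'' --- is the uniform bookkeeping of the single constant $C_{1}$: one must check that $C_{1}=\max\{1,c_{\max}\}/c_{\min}^{2}$ simultaneously dominates $\max\{1,\kappa\}$ (needed for the $s_{\ell}$ and $t_{\ell}$ bounds) and $\max\{1,\kappa\}/c_{\min}$ (needed for the $L_{2\ell}$ bound, where the extra factor $1/c_{\min}$ enters through the telescoping sum $\sum h_{j}=x_{2\ell}$), for every admissible ratio $\kappa\in\{c_{1}/c_{2},c_{2}/c_{1}\}$ with $c_{1},c_{2}\in[c_{\min},c_{\max}]$. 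Once this is in place, the three inequalities and the concluding bound $0<R_{\ell}^{1}\le1/2$ follow by assembling the elementary estimates above.
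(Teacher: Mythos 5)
Your overall route is the one the paper intends (its own proof is just the one-liner ``simple consequence of the definitions''), and two of your three bookkeeping claims are fine: the domination $\max\{1,\kappa\}/c_{\min}\le C_{1}$, which is what the $L_{2\ell}$-estimate needs, does hold for every $\kappa\in\{\mu,\mu^{-1}\}$ with $c_{1},c_{2}\in[c_{\min},c_{\max}]$, and your derivation of \eqref{estRl1cond} directly from \eqref{defC1} via $\delta_{\ell-1}+\delta_{\ell}\le\omega x_{\ell}/c_{\min}<(4C_{1})^{-1}$ is correct. The genuine gap is the other half of exactly the point you single out as the crux: the inequality $\max\{1,\kappa\}\le C_{1}$ is \emph{not} a consequence of $c_{\min}\le c_{1},c_{2}\le c_{\max}$ and $C_{1}=\max\{1,c_{\max}\}/c_{\min}^{2}$. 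It is equivalent to $c_{\max}c_{\min}\le\max\{1,c_{\max}\}$, i.e.\ essentially to $c_{\min}\le1$. For instance $c_{1}=c_{\min}=2$, $c_{2}=c_{\max}=4$ gives $\kappa=\mu^{-1}=2$ while $C_{1}=1$, so your chain $|t_{\ell}(\kappa)|\le\delta_{\ell-1}+\kappa\delta_{\ell}\le C_{1}(\delta_{\ell-1}+\delta_{\ell})$ breaks; in fact for $\delta_{\ell-1}=\delta_{\ell}=\delta$ small one has $t_{\ell}(2)\approx3\delta>2\delta=R_{\ell}^{1}$ and $|s_{\ell}(2)|\approx3\delta^{2}>2\delta^{2}=R_{\ell}^{2}$, so the $s_{\ell}$- and $t_{\ell}$-bounds with the literal constant $C_{1}$ cannot hold in that regime at all.

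To be fair, this is as much an imprecision in the lemma's constant as in your verification: as stated, the $s_{\ell}$- and $t_{\ell}$-estimates with $C_{1}=\max\{1,c_{\max}\}/c_{\min}^{2}$ are only valid under a normalization such as $c_{\min}\le1$; otherwise $C_{1}$ must be enlarged, e.g.\ to $\max\{1,c_{\max}\}/c_{\min}=c_{\min}C_{1}$, which \emph{does} dominate $\max\{1,\kappa\}$ for all admissible $\kappa$. Such a repair is harmless for the rest of Section \ref{subsec:refinedestimate}: Lemmas \ref{LemmaPest1}--\ref{lem:RepBeta} and Proposition \ref{prop:refinedestbeta} only require constants depending on $c_{\min},c_{\max}$, and the one place where the matching between $C_{1}$ and the threshold matters, namely \eqref{estRl1cond}, is recovered by using the same enlarged constant in the smallness condition \eqref{defC1}, which merely redefines the set of indices $\ell\le L$ to which the refined analysis applies. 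So your proof needs either the extra hypothesis $c_{\min}\le1$ or this adjustment of the constant; as written, the asserted domination $\max\{1,\kappa\}\le C_{1}$ — which you present as checked — is false in general.
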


\begin{proof}
This is a simple consequence of the definitions of $s_{\ell}, t_{\ell}$ and
$L_{2\ell}$.
\end{proof}

The estimates in Lemma \ref{lem:estRZ} can be used to define the majorant of
$S_{2\ell}\left(  \kappa\right)$, $T_{2\ell}\left(  \kappa\right)  $ by%
\[%
\begin{array}
[c]{rl}%
\mathbf{v}_{\ell} & :=\left(  \check{S}_{2\ell}\left(  \kappa\right)
,\check{T}_{2\ell}\left(  \kappa\right)  \right)  ^{\intercal},\\
\check{S}_{2\ell}\left(  \kappa\right)  & :=\max\left\{  \left\vert S_{2\ell
}\left(  \kappa\right)  \right\vert ,\left\vert S_{2\ell}\left(  \kappa
^{-1}\right)  \right\vert \right\}  ,\\
\check{T}_{2\ell}\left(  \kappa\right)  & :=\max\left\{  \left\vert T_{2\ell
}\left(  \kappa\right)  \right\vert ,\left\vert T_{2\ell}\left(  \kappa
^{-1}\right)  \right\vert \right\}
\end{array}
\]
via the affine recursion
\[
\mathbf{v}_{\ell+1}=\mathbf{Q}_{\ell}\mathbf{v}_{\ell}+\mathbf{R}_{\ell}%
\]
for
\[
\mathbf{Q}_{\ell}:=%
\begin{bmatrix}
1+R_{2\ell+2}^{2} & R_{2\ell+2}^{1}\\
R_{2\ell+2}^{1} & 1+R_{2\ell+2}^{2}%
\end{bmatrix}
\quad\text{and\quad}\mathbf{R}_{\ell}:=%
\begin{bmatrix}
R_{2\ell+2}^{2}+R_{2\ell+2}^{1}Z_{2\ell}\\
R_{2\ell+2}^{2}Z_{2\ell}%
\end{bmatrix}
,
\]
where $R_{2\ell+2}^{1},R_{2\ell+2}^{2},Z_{2\ell}$ are defined in Lemma
\ref{lem:estRZ}. The recursion can be resolved and we get
\begin{equation}
\mathbf{v}_{\ell+1}=\mathbf{P}_{0,\ell}\mathbf{v}_{1}+\sum_{j=1}^{\ell
}\mathbf{P}_{j,\ell}\mathbf{R}_{j},\quad\text{with\quad}\mathbf{P}_{j,\ell
}:=\mathbf{Q}_{\ell}\cdot\ldots\cdot\mathbf{Q}_{j+1}.
\label{resolvedrecursion}%
\end{equation}
A simple algebraic calculus (cf. Lemma \ref{lem:A-prod}) yields%
\[
\mathbf{P}_{j,\ell}=%
\begin{bmatrix}
p_{j,\ell}^{(1)} & p_{j,\ell}^{(2)}\\
p_{j,\ell}^{(2)} & p_{j,\ell}^{(1)}%
\end{bmatrix}
,
\]
with
\begin{align*}
p_{j,\ell}^{(1)}  &  =\frac{1}{2}\left(  \prod\limits_{k=j+1}^{\ell}\left(
1+R_{2k+2}^{2}+R_{2k+2}^{1}\right)  +\prod\limits_{k=j+1}^{\ell}\left(
1+R_{2k+2}^{2}-R_{2k+2}^{1}\right)  \right)  ,\\
p_{j,\ell}^{(2)}  &  =\frac{1}{2}\left(  \prod\limits_{k=j+1}^{\ell}\left(
1+R_{2k+2}^{2}+R_{2k+2}^{1}\right)  -\prod\limits_{k=j+1}^{\ell}\left(
1+R_{2k+2}^{2}-R_{2k+2}^{1}\right)  \right)  .
\end{align*}
We may apply Lemma \ref{lem:A-estprod} with $a_{k}=1+R_{2k+2}^{2}$ and
$b_{k}=R_{2k+2}^{1}$ by taking into account \eqref{estRl1cond}. This leads to%
\[%
\begin{array}
[c]{c}%
\left\vert p_{j,\ell}^{(1)}\right\vert \leq\exp\left(  \sum_{k=j+1}^{\ell
}R_{2k+2}^{2}\right)  \cosh\left(  \sum_{\ell=j+1}^{k}R_{2k+2}^{1}\right)  ,\\
\left\vert p_{j,\ell}^{(2)}\right\vert \leq\exp\left(  \sum_{k=j+1}^{\ell
}R_{2k+2}^{2}\right)  \sinh\left(  \sum_{k=j+1}^{\ell}R_{2k+2}^{1}\right)  .
\end{array}
\]

\begin{lemma}
\label{LemmaPest1} It holds
\begin{equation}%
\begin{split}
\left\vert p_{j,\ell}^{(1)}\right\vert  &  \leq1+C_2\left(  z_{2\ell+2}%
-z_{2j+2}\right)  ^{2},\\
\left\vert p_{j,\ell}^{(2)}\right\vert  &  \leq C_2\left(  z_{2\ell+2}%
-z_{2j+2}\right)  ,
\end{split}
\label{Pest1}%
\end{equation}
for some constant $C_2>0$ depending only on $c_{\min},~c_{\max}$.
\end{lemma}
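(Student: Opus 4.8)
The plan is to start from the two estimates displayed just above the statement, which (via Lemma~\ref{lem:A-estprod}, applied with $a_{k}=1+R_{2k+2}^{2}$ and $b_{k}=R_{2k+2}^{1}$) read $|p_{j,\ell}^{(1)}|\le \exp(\sum_{k=j+1}^{\ell}R_{2k+2}^{2})\cosh(\sum_{k=j+1}^{\ell}R_{2k+2}^{1})$ and $|p_{j,\ell}^{(2)}|\le \exp(\sum_{k=j+1}^{\ell}R_{2k+2}^{2})\sinh(\sum_{k=j+1}^{\ell}R_{2k+2}^{1})$, and to control everything through the single quantity $\Delta:=z_{2\ell+2}-z_{2j+2}\ge 0$. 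Thus the whole proof reduces to: (i) bounding the two sums that appear as arguments, with the correct order in $\Delta$, and (ii) an elementary calculus estimate of $\exp$, $\cosh$, $\sinh$ near $0$.

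For step (i) I would telescope. Since $R_{2k+2}^{1}=C_{1}(\delta_{2k+1}+\delta_{2k+2})$ and $\delta_{i}=\omega h_{i}/c_{i}$, and the index pairs $(2k+1,2k+2)$ for $k=j+1,\dots,\ell$ exhaust $\{2j+3,\dots,2\ell+2\}$, the relation $z_{2\ell+2}-z_{2j+2}=\omega\sum_{i=2j+3}^{2\ell+2}h_{i}$ together with $c_{i}\ge c_{\min}$ gives
\[
\sum_{k=j+1}^{\ell}R_{2k+2}^{1}=C_{1}\sum_{i=2j+3}^{2\ell+2}\frac{\omega h_{i}}{c_{i}}\le\frac{C_{1}}{c_{\min}}\,\Delta .
\]
As all $\delta_{i}\ge0$ one has $\sum_{i}\delta_{i}^{2}\le(\sum_{i}\delta_{i})^{2}$, hence likewise
\[
\sum_{k=j+1}^{\ell}R_{2k+2}^{2}=C_{1}\sum_{i=2j+3}^{2\ell+2}\delta_{i}^{2}\le C_{1}\Bigl(\sum_{i=2j+3}^{2\ell+2}\delta_{i}\Bigr)^{2}\le\frac{C_{1}}{c_{\min}^{2}}\,\Delta^{2}.
\]
By assumption \eqref{defC1} every index occurring here satisfies $z_{\ell}=\omega x_{\ell}<c_{\min}/(4C_{1})$, so $0\le\Delta\le z_{2\ell+2}<c_{\min}/(4C_{1})$; consequently $\sum R_{2k+2}^{1}\le\tfrac14$ and $\sum R_{2k+2}^{2}\le\tfrac{1}{16C_{1}}$, i.e. both arguments lie in a fixed interval $[0,M]$ depending only on $c_{\min},c_{\max}$.

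For step (ii) I would use that on $[0,M]$ the functions $(e^{x}-1)/x$, $(\cosh x-1)/x^{2}$ and $(\sinh x)/x$ are increasing, so that $e^{x}\le 1+\tfrac{e^{M}-1}{M}x$, $\cosh x\le 1+\tfrac{\cosh M-1}{M^{2}}x^{2}$ and $\sinh x\le \tfrac{\sinh M}{M}x$. Inserting the bounds from step (i) then yields $\exp(\sum R_{2k+2}^{2})\le 1+C'\Delta^{2}$, $\cosh(\sum R_{2k+2}^{1})\le 1+C''\Delta^{2}$ and $\sinh(\sum R_{2k+2}^{1})\le C'''\Delta$ with $C',C'',C'''$ depending only on $c_{\min},c_{\max}$. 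Multiplying the first two gives $(1+C'\Delta^{2})(1+C''\Delta^{2})=1+(C'+C''+C'C''\Delta^{2})\Delta^{2}$, and since $\Delta$ is bounded by the constant $c_{\min}/(4C_{1})$ the $\Delta^{4}$ term is absorbed into a $\Delta^{2}$ term, which gives $|p_{j,\ell}^{(1)}|\le 1+C_{2}\Delta^{2}$; similarly $|p_{j,\ell}^{(2)}|\le(1+C'\Delta^{2})\,C'''\Delta\le C_{2}\Delta$. The one point that really needs care is keeping the leading $1$ in the estimate for $p_{j,\ell}^{(1)}$: the prefactor $\exp(\sum R_{2k+2}^{2})$ must be written as $1+O(\Delta^{2})$ and not merely bounded by a constant $>1$, which is precisely why the quadratic control $\sum_{i}\delta_{i}^{2}\le(\sum_{i}\delta_{i})^{2}$ is used rather than a uniform bound on $\sum R_{2k+2}^{2}$. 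Everything else is routine bookkeeping, so I do not anticipate a genuine obstacle.
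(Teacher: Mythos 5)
Your proposal is correct and follows essentially the same route as the paper: the paper's proof consists exactly of the two telescoping bounds $\sum_{k=j+1}^{\ell}R_{2k+2}^{1}\le \frac{C_{1}}{c_{\min}}(z_{2\ell+2}-z_{2j+2})$ and $\sum_{k=j+1}^{\ell}R_{2k+2}^{2}\le \frac{C_{1}}{c_{\min}^{2}}(z_{2\ell+2}-z_{2j+2})^{2}$, after which it simply states that the assertion follows. Your step (ii) just makes explicit the elementary $\exp$/$\cosh$/$\sinh$ bookkeeping (using the smallness from \eqref{defC1}) that the paper leaves implicit, so there is nothing genuinely different to compare.
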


\begin{proof}
By Lemma \ref{lem:estRZ}, we have
\[%
\begin{split}
\sum_{k=j+1}^{\ell}R_{2k+2}^{1}  &  \leq C_{1}\sum_{k=j+1}^{\ell}\left(
\delta_{2k+1}+\delta_{2k+2}\right)  \leq\frac{C_{1}}{c_{\min}}\left(
z_{2\ell+2}-z_{2j+2}\right)  ,\\
\sum_{k=j+1}^{\ell}R_{2k+2}^{2}  &  \leq C_{1}\left(  \sum_{k=j+1}^{\ell
}\left(  \delta_{2k+1}+\delta_{2k+2}\right)  \right)  ^{2}\leq\frac{C_{1}%
}{c_{\min}^{2}}\left(  z_{2\ell+2}-z_{2j+2}\right)  ^{2},
\end{split}
\]
which leads to the assertion.
\end{proof}

\begin{lemma}
\label{LemPjkRj}The sum in \eqref{resolvedrecursion} can be estimated
componentwise by%
\begin{align*}
\sum_{j=1}^{\ell}\left\vert \left(  \mathbf{P}_{j,\ell}\mathbf{R}_{j}\right)
_{1}\right\vert  &  \leq C_{3}z_{2\ell+2}^{2}\\
\sum_{j=1}^{\ell}\left\vert \left(  \mathbf{P}_{j,\ell}\mathbf{R}_{j}\right)
_{2}\right\vert  &  \leq C_{3}z_{2\ell+2}^{3}%
\end{align*}
for a constant $C_{3}>0$ depending only on $c_{\min},~c_{\max}$.
\end{lemma}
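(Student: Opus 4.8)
The plan is to estimate each summand $\mathbf{P}_{j,\ell}\mathbf{R}_{j}$ componentwise and then carry out two elementary summations. Since $\mathbf{P}_{j,\ell}=\begin{bmatrix}p_{j,\ell}^{(1)}&p_{j,\ell}^{(2)}\\ p_{j,\ell}^{(2)}&p_{j,\ell}^{(1)}\end{bmatrix}$, one first writes the product explicitly as
\[
\left(\mathbf{P}_{j,\ell}\mathbf{R}_{j}\right)_{1}=p_{j,\ell}^{(1)}(\mathbf{R}_{j})_{1}+p_{j,\ell}^{(2)}(\mathbf{R}_{j})_{2},\qquad\left(\mathbf{P}_{j,\ell}\mathbf{R}_{j}\right)_{2}=p_{j,\ell}^{(2)}(\mathbf{R}_{j})_{1}+p_{j,\ell}^{(1)}(\mathbf{R}_{j})_{2}.
\]
Next I would invoke Lemma \ref{LemmaPest1} together with the smallness condition \eqref{defC1}, which guarantees that every scaled jump point satisfies $z_{k}=\omega x_{k}\le c_{\min}/(4C_{1})$; hence $z_{2\ell+2}-z_{2j+2}\le z_{2\ell+2}$ is bounded by a fixed constant, so $|p_{j,\ell}^{(1)}|\lesssim 1$ and $|p_{j,\ell}^{(2)}|\lesssim z_{2\ell+2}$. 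This reduces the whole task to showing $\sum_{j}(\mathbf{R}_{j})_{1}\lesssim z_{2\ell+2}^{2}$ and $\sum_{j}(\mathbf{R}_{j})_{2}\lesssim z_{2\ell+2}^{3}$.

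For these two sums I would substitute the bounds of Lemma \ref{lem:estRZ}, i.e. $(\mathbf{R}_{j})_{1}=R_{2j+2}^{2}+R_{2j+2}^{1}Z_{2j}\lesssim \delta_{2j+1}^{2}+\delta_{2j+2}^{2}+(\delta_{2j+1}+\delta_{2j+2})z_{2j}$ and $(\mathbf{R}_{j})_{2}=R_{2j+2}^{2}Z_{2j}\lesssim(\delta_{2j+1}^{2}+\delta_{2j+2}^{2})z_{2j}$, and then use the two elementary facts that (i) $\sum_{k}\delta_{k}\le\frac{\omega}{c_{\min}}\sum_{k}h_{k}\le\frac{1}{c_{\min}}z_{2\ell+2}$ by telescoping of the interval lengths, whence $\sum_{k}\delta_{k}^{2}\le\big(\sum_{k}\delta_{k}\big)^{2}\lesssim z_{2\ell+2}^{2}$, and (ii) $z_{2j}\le z_{2\ell+2}$ for $j\le\ell$. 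These yield $\sum_{j}(\mathbf{R}_{j})_{1}\lesssim z_{2\ell+2}^{2}+z_{2\ell+2}\sum_{j}(\delta_{2j+1}+\delta_{2j+2})\lesssim z_{2\ell+2}^{2}$ and $\sum_{j}(\mathbf{R}_{j})_{2}\lesssim z_{2\ell+2}\sum_{j}(\delta_{2j+1}^{2}+\delta_{2j+2}^{2})\lesssim z_{2\ell+2}^{3}$. Combining with the pointwise bounds on $p_{j,\ell}^{(1)},p_{j,\ell}^{(2)}$ gives $\sum_{j}|(\mathbf{P}_{j,\ell}\mathbf{R}_{j})_{1}|\lesssim z_{2\ell+2}^{2}+z_{2\ell+2}\cdot z_{2\ell+2}^{3}\lesssim z_{2\ell+2}^{2}$ (the cross term being of higher order, hence absorbed using that $z_{2\ell+2}$ is uniformly bounded) and $\sum_{j}|(\mathbf{P}_{j,\ell}\mathbf{R}_{j})_{2}|\lesssim z_{2\ell+2}\cdot z_{2\ell+2}^{2}+z_{2\ell+2}^{3}\lesssim z_{2\ell+2}^{3}$, which is the assertion.

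The argument is essentially mechanical, and I expect the bookkeeping — rather than any conceptual difficulty — to be the only delicate point: one must check that every constant produced depends solely on $c_{\min},c_{\max}$ (through $C_{1}$, $C_{2}$ and the uniform upper bound on $z_{2\ell+2}$), and observe that no cancellation or sign information is needed, since the cross terms such as $z_{2\ell+2}(\mathbf{R}_{j})_{2}$ enter only at order $z_{2\ell+2}^{4}$ and are harmless.
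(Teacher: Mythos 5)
Your proposal is correct and follows essentially the same route as the paper's proof: write out $\mathbf{P}_{j,\ell}\mathbf{R}_{j}$ componentwise, feed in the bounds from Lemma~\ref{LemmaPest1} for $p_{j,\ell}^{(1)},p_{j,\ell}^{(2)}$ and from Lemma~\ref{lem:estRZ} for $R^{1},R^{2},Z$, and sum using the telescoping bound $\sum_{k}\delta_{k}\lesssim z_{2\ell+2}$. The only cosmetic difference is that you invoke the uniform bound $z_{2\ell+2}\lesssim 1$ (from \eqref{defC1}) immediately to flatten $|p_{j,\ell}^{(1)}|\lesssim 1$, whereas the paper carries the factor $1+Z_{2\ell+2}^{2}$ through to the final summation; both are valid and yield the same conclusion.
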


\begin{proof}
For the components of $\mathbf{R}_{j}$ we get from Lemma \ref{lem:estRZ}
\begin{align*}
\left\vert R_{2j+2}^{2}+R_{2j+2}^{1}Z_{2j}\right\vert  &  \leq C_{1}\left(
\delta_{2j+1}^{2}+\delta_{2j+2}^{2}\right)  +C_{1}\left(  \delta_{2j+1}%
+\delta_{2j+2}\right)  Z_{2j},\\
\left\vert R_{2j+2}^{2}Z_{2j}\right\vert  &  \leq C_{1}\left(  \delta
_{2j+1}^{2}+\delta_{2j+2}^{2}\right)  Z_{2j}.
\end{align*}
We use \eqref{Pest1} to get%
\begin{align*}
\left\vert \left(  \mathbf{P}_{j,\ell}\mathbf{R}_{j}\right)  _{1}\right\vert
&  \leq C\left(  1+Z_{2\ell+2}^{2}\right)  \left(  \left(  \delta
_{2j+1}+\delta_{2j+2}\right)  Z_{2\ell+2}+\left(  \delta_{2j+1}^{2}%
+\delta_{2j+2}^{2}\right)  \right)  ,\\
\left\vert \left(  \mathbf{P}_{j,\ell}\mathbf{R}_{j}\right)  _{2}\right\vert
&  \leq CZ_{2\ell+2}\left(  Z_{2\ell+2}\left(  \delta_{2j+1}+\delta
_{2j+2}\right)  +\left(  1+Z_{2\ell+2}^{2}\right)  \left(  \delta_{2j+1}%
^{2}+\delta_{2j+2}^{2}\right)  \right)
\end{align*}
for a constant $C >1$ depending only on $c_{\min},c_{\max}$. A summation
over $j$ leads to the assertion.
\end{proof}

Finally, we can combine Lemma \ref{LemPjkRj} and the fact that
\[
\check{S}_{2}\left(  \mu\right)  =\max\left\{  \left\vert S_{2}\left(
\mu\right)  \right\vert ,\left\vert S_{2}\left(  \mu^{-1}\right)  \right\vert
\right\}  \leq C_{1}(\delta_{1}^{2}+\delta_{2}^{2})\leq\check{C}_{1}z_{2\ell
}^{2}%
\]
for all $1\leq\ell\leq L$ to estimate $\check{S}_{2\ell}$ and $\check
{T}_{2\ell}$ given by the recursion \eqref{resolvedrecursion} for $\kappa
\in\{\mu,\mu^{-1}\}$ by%
\begin{equation}%
\begin{array}
[c]{c}%
\left\vert S_{2\ell}(\kappa)\right\vert \leq\check{S}_{2\ell}(\mu)\leq
Cz_{2\ell}^{2},\\
\left\vert T_{2\ell}(\kappa)\right\vert \leq\check{T}_{2\ell}(\mu)\leq
Cz_{2\ell}^{3}.
\end{array}
\label{eq:STest}%
\end{equation}
We summarize our findings in the next lemma, including the estimate of
$\beta_{0,\ell}$ for odd indices $\ell$.

\begin{lemma}
\label{lem:RepBeta} It holds that
\[
\operatorname{Re}\beta_{0,\ell}=1+\mathfrak{s}_{\ell},\qquad\operatorname{Im}%
\beta_{0,\ell}=-\frac{z_{\ell}}{c_{\ell+1}}-\mathfrak{t}_{\ell},
\]
for some $\mathfrak{s}_{\ell}$, $\mathfrak{t}_{\ell}\in\mathbb{R}$, bounded
by
\begin{equation}
\left\vert \mathfrak{s}_{\ell}\right\vert \leq C\left(  \frac{\omega x_{\ell}%
}{c_{\ell}}\right)  ^{2},\qquad\left\vert \mathfrak{t}_{\ell}\right\vert \leq
C\left(  \frac{\omega x_{\ell}}{c_{\ell}}\right)  ^{3}, \label{sltlest}%
\end{equation}
for a constant $C>0$ only depending on $c_{\min},c_{\max}$.
\end{lemma}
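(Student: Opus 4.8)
The plan is to prove both identities by induction on $\ell$, distinguishing even and odd indices; the even case is essentially a repackaging of the estimates already assembled above, while the odd case follows by one further matrix multiplication in which the two-speed structure \eqref{propql} produces the clean leading term $-z_{\ell}/c_{\ell+1}$. For $\ell$ even, write $\ell=2\ell'$ with $1<2\ell'\le L$. I would start from \eqref{eq:betaRealImag} together with the factorisation \eqref{pprodlem}; applying the matrix of \eqref{pprodlem} to the initial vector $(1,0)^{\intercal}$, using \eqref{formprincpart} for the principal term and reading off the remainder term, gives
\[
\begin{pmatrix}\operatorname{Re}\beta_{0,2\ell'}\\ \operatorname{Im}\beta_{0,2\ell'}\end{pmatrix}=\begin{pmatrix}1+S_{2\ell'}(\mu)\\ -\dfrac{z_{2\ell'}}{c_{2\ell'+1}}-R_{2\ell'}^{3}-T_{2\ell'}(\mu^{-1})\end{pmatrix}.
\]
One then sets $\mathfrak{s}_{2\ell'}:=S_{2\ell'}(\mu)$ and $\mathfrak{t}_{2\ell'}:=R_{2\ell'}^{3}+T_{2\ell'}(\mu^{-1})$. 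The bound on $\mathfrak{s}_{2\ell'}$ is exactly the $S$-estimate of \eqref{eq:STest}, and the bound on $\mathfrak{t}_{2\ell'}$ combines the $T$-estimate of \eqref{eq:STest} with $|R_{2\ell'}^{3}|\le Cz_{2\ell'}^{3}$ from Lemma \ref{lem:defRl2}. Since $c_{\min}\le c_{2\ell'}\le c_{\max}$, we have $z_{2\ell'}=\omega x_{2\ell'}\le c_{\max}\,\omega x_{2\ell'}/c_{2\ell'}$, which converts these into \eqref{sltlest} at the cost of enlarging the constant.

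For $\ell=2k+1$ I would substitute the even-index representation for the index $2k$ (available by induction) into \eqref{eq:betaRealImagOdd}, i.e.\ use $\operatorname{Re}\beta_{0,2k}=1+\mathfrak{s}_{2k}$, $\operatorname{Im}\beta_{0,2k}=-z_{2k}/c_{2k+1}-\mathfrak{t}_{2k}$, and Taylor-expand $\cos\delta_{2k+1}=1+O(\delta_{2k+1}^{2})$, $\sin\delta_{2k+1}=\delta_{2k+1}+O(\delta_{2k+1}^{3})$. The real component then has leading term $1$, and the imaginary component has leading term $-\mu\sin\delta_{2k+1}-\mu\cos\delta_{2k+1}\,z_{2k}/c_{2k+1}$. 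Here I invoke the identity $\mu/c_{2k+1}=1/c_{2k+2}$, which holds because under \eqref{propql} the indices $2k+1$ and $2k+2$ carry the speeds $c_{1}$ and $c_{2}$ and $\mu=c_{1}/c_{2}$; combined with $x_{2k+1}=x_{2k}+h_{2k+1}$, the two leading terms collapse to exactly $-z_{2k+1}/c_{2k+2}=-z_{\ell}/c_{\ell+1}$. Everything else — the Taylor remainders of order $\delta_{2k+1}^{2}$ in $\operatorname{Re}$ and $\delta_{2k+1}^{3}$ in $\operatorname{Im}$, and the cross terms $\cos\delta_{2k+1}\,\mathfrak{s}_{2k}$, $\sin\delta_{2k+1}\,\mathfrak{s}_{2k}$, $\cos\delta_{2k+1}\,\mathfrak{t}_{2k}$, $(\cos\delta_{2k+1}-1)z_{2k}/c_{2k+1}$ — is collected into $\mathfrak{s}_{2k+1}$ and $\mathfrak{t}_{2k+1}$.

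To bound these remainders I would use $\delta_{2k+1}=\omega h_{2k+1}/c_{2k+1}\le (z_{2k+1}-z_{2k})/c_{\min}$ and $z_{2k}\le z_{2k+1}$ together with the induction hypotheses $|\mathfrak{s}_{2k}|\le Cz_{2k}^{2}$, $|\mathfrak{t}_{2k}|\le Cz_{2k}^{3}$; then each collected term is $O(z_{2k+1}^{2})$ in the real part and $O(z_{2k+1}^{3})$ in the imaginary part, which gives \eqref{sltlest} after converting $z_{2k+1}$ to $\omega x_{2k+1}/c_{2k+1}$ as in the even case (the base case $\ell=1$ being immediate from $\operatorname{Re}\beta_{0,1}=\cos\delta_{1}$, $\operatorname{Im}\beta_{0,1}=-\mu\sin\delta_{1}$ and $x_{1}=h_{1}$). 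I expect the one slightly delicate point to be precisely this cancellation in the odd step: one must verify that $-\mu\sin\delta_{2k+1}$ and $-\mu(\cos\delta_{2k+1})z_{2k}/c_{2k+1}$ combine into $-z_{2k+1}/c_{2k+2}$ with a genuinely cubic error rather than a quadratic one, and this is the only place where the identity $\mu/c_{2k+1}=1/c_{2k+2}$, the two-speed structure \eqref{propql}, and the additivity $x_{2k+1}=x_{2k}+h_{2k+1}$ must be used in tandem.
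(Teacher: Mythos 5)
Your proposal is correct and follows essentially the same route as the paper: the even case is read off from \eqref{eq:betaRealImag}, \eqref{pprodlem}, \eqref{formprincpart}, \eqref{eq:STest}, and the odd case is obtained by one application of \eqref{eq:betaRealImagOdd}, Taylor expansion of $\cos\delta_{2k+1}$ and $\sin\delta_{2k+1}$, and the identity $\mu/c_{2k+1}=1/c_{2k+2}$ combined with $x_{2k+1}=x_{2k}+h_{2k+1}$ to produce the exact leading term $-z_{2k+1}/c_{2k+2}$ with cubic remainder. The delicate cancellation you flag is indeed the crux and you resolve it the same way the paper does.
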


\begin{proof}
For the even entries the claim follows from the straightforward combination of
\eqref{eq:betaRealImag}, \eqref{pprodlem}, \eqref{formprincpart},
\eqref{eq:STest}. For the odd entries, we first use \eqref{eq:betaRealImagOdd}
and compute
\begin{align*}
\operatorname{Re}\left(  \beta_{0,2\ell+1}\right)   &  =\cos(\delta_{2\ell
+1})\operatorname{Re}\left(  \beta_{0,2\ell}\right)  +\sin(\delta_{2\ell
+1})\operatorname{Im}\left(  \beta_{0,2\ell}\right) \\
&  =1+\mathfrak{s}_{2\ell+1},
\end{align*}
for some $\mathfrak{s}_{2\ell+1}$ with $\left\vert \mathfrak{s}_{2\ell
+1}\right\vert \leq Cz_{2\ell+1}^{2}$ by a Taylor argument. On the other hand
we also know%
\[
\operatorname{Re}(\beta_{0,2\ell})=1+\mathfrak{s}_{2\ell},\qquad
\operatorname{Im}(\beta_{0,2\ell})=-\frac{z_{2\ell}}{c_{2\ell+1}}%
-\mathfrak{t}_{2\ell},
\]
for some $\mathfrak{s}_{2\ell}$, $\mathfrak{t}_{2\ell}\in\mathbb{R}$, bounded
by
\[
\left\vert \mathfrak{s}_{2\ell}\right\vert \leq C\left(  \frac{\omega
x_{2\ell}}{c_{2\ell}}\right)  ^{2},\qquad\left\vert \mathfrak{t}_{2\ell
}\right\vert \leq C\left(  \frac{\omega x_{2\ell}}{c_{2\ell}}\right)  ^{3}.
\]
For the imaginary part, we employ again \eqref{eq:betaRealImagOdd} and
\eqref{sltlest} for (even) $2\ell$ to get%
\begin{align*}
\operatorname{Im}&\beta_{0,2\ell+1}    =-\mu\sin(\delta_{2\ell+1}%
)\beta_{0,2\ell}^{R}+\mu\cos(\delta_{2\ell+1})\beta_{0,2\ell}^{I}\\
&  =-\mu\sin(\delta_{2\ell+1})\left(  1+\mathfrak{s}_{2\ell}\right)  +\mu
\cos(\delta_{2\ell+1})\left(  -\frac{z_{2\ell}}{c_{2\ell+1}}-\mathfrak{t}%
_{2\ell}\right) \\
&  =-\frac{z_{2\ell+1}}{c_{2\ell}}+\mu\frac{\tilde{\delta}_{2\ell+1}^{3}}%
{6}-\mu\delta_{2\ell+1}\mathfrak{s}_{2\ell}+\mu\frac{\left(  \delta_{2\ell
+1}^{\prime}\right)  ^{2}}{2}\frac{z_{2\ell}}{c_{2\ell+1}}+\mu\cos
(\delta_{2\ell+1})\left(  -\mathfrak{t}_{2\ell}\right)  ,
\end{align*}
for some $0\leq\tilde{\delta}_{2\ell+1},\delta_{2\ell+1}^{\prime}\leq
\delta_{2\ell+1}$. By using \eqref{sltlest} (for (even) $2\ell$) the assertion follows.
\end{proof}

\begin{proposition}
\label{prop:refinedestbeta} For all $\ell\leq L$ it holds
\[
\operatorname{Im}\left(  e^{\operatorname*{i}\frac{z_{\ell}}{c_{\ell+1}}}%
\beta_{0,\ell}\right)  \leq Cz_{\ell}^{3}.
\]
The constant $C$ depends only on $c_{\min},~c_{\max}$ but is independent of
$L$.
\end{proposition}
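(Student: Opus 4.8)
The plan is to read off the real and imaginary parts of $\beta_{0,\ell}$ from Lemma~\ref{lem:RepBeta} and reduce the statement to a one-line Taylor expansion. Set $\varphi_\ell:=z_\ell/c_{\ell+1}$. Since
\[
\operatorname{Im}\left(e^{\operatorname{i}\varphi_\ell}\beta_{0,\ell}\right)=\sin(\varphi_\ell)\operatorname{Re}\beta_{0,\ell}+\cos(\varphi_\ell)\operatorname{Im}\beta_{0,\ell},
\]
inserting $\operatorname{Re}\beta_{0,\ell}=1+\mathfrak{s}_\ell$ and $\operatorname{Im}\beta_{0,\ell}=-\varphi_\ell-\mathfrak{t}_\ell$ from Lemma~\ref{lem:RepBeta} yields
\[
\operatorname{Im}\left(e^{\operatorname{i}\varphi_\ell}\beta_{0,\ell}\right)=\bigl(\sin\varphi_\ell-\varphi_\ell\cos\varphi_\ell\bigr)+\mathfrak{s}_\ell\sin\varphi_\ell-\mathfrak{t}_\ell\cos\varphi_\ell,
\]
so it suffices to bound each of the three summands by $Cz_\ell^3$.

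For the first summand, the elementary identity $\sin\varphi-\varphi\cos\varphi=\varphi^3/3+O(\varphi^5)$ gives $|\sin\varphi_\ell-\varphi_\ell\cos\varphi_\ell|\le C\varphi_\ell^3$: here one uses that the restriction $\ell\le L$ forces $z_\ell=\omega x_\ell<c_{\min}/(4C_1)$ by \eqref{defC1}, and $c_{\ell+1}\ge c_{\min}$, so $\varphi_\ell$ lies in a fixed bounded range depending only on $c_{\min},c_{\max}$ and the $O(\varphi^5)$-remainder is absorbed; then $\varphi_\ell^3\le z_\ell^3/c_{\min}^3$. For the second summand, \eqref{sltlest} gives $|\mathfrak{s}_\ell|\le C(\omega x_\ell/c_\ell)^2\le Cz_\ell^2/c_{\min}^2$ while $|\sin\varphi_\ell|\le\varphi_\ell\le z_\ell/c_{\min}$, hence $|\mathfrak{s}_\ell\sin\varphi_\ell|\le Cz_\ell^3$. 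For the third summand, \eqref{sltlest} gives $|\mathfrak{t}_\ell|\le C(\omega x_\ell/c_\ell)^3\le Cz_\ell^3/c_{\min}^3$ and $|\cos\varphi_\ell|\le1$, so $|\mathfrak{t}_\ell\cos\varphi_\ell|\le Cz_\ell^3$. Summing the three bounds proves the claim, the constant depending only on $c_{\min},c_{\max}$ through $C_1$ and the constants of Lemma~\ref{lem:RepBeta}, and in particular independent of $L$.

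There is no genuine obstacle here: the content has already been extracted in Lemma~\ref{lem:RepBeta}, and what remains is bookkeeping. The only two points requiring a moment's care are (i) observing that $\ell\le L$ keeps $\varphi_\ell$ in a bounded interval so the higher-order Taylor remainder is harmless, and (ii) checking that every constant in the chain of estimates ultimately depends only on $c_{\min}$ and $c_{\max}$, which is already built into the hypotheses used.
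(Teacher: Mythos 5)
Your proof is correct and follows essentially the same route as the paper: starting from the real/imaginary decomposition $\operatorname{Im}(e^{\operatorname{i}\varphi_\ell}\beta_{0,\ell})=\sin\varphi_\ell\operatorname{Re}\beta_{0,\ell}+\cos\varphi_\ell\operatorname{Im}\beta_{0,\ell}$, substituting Lemma~\ref{lem:RepBeta}, Taylor-expanding the trigonometric factors, and invoking the bounds \eqref{sltlest}. The paper organizes the Taylor remainders slightly differently (Lagrange form with intermediate points $\tilde z_\ell,z_\ell'$) whereas you group the leading term as $\sin\varphi_\ell-\varphi_\ell\cos\varphi_\ell$; both are equivalent bookkeeping, and in fact your leading term satisfies $|\sin\varphi-\varphi\cos\varphi|\le\varphi^3/3$ for all $\varphi\ge0$ (since its derivative is $\varphi\sin\varphi\le\varphi^2$), so the bounded-range observation, while harmless, is not even needed there.
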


\begin{proof}
Using Lemma \ref{lem:RepBeta}, we compute%
\begin{align*}
\operatorname{Im}  &  \left(  e^{\operatorname*{i}\frac{z_{\ell}}{c_{\ell+1}}%
}\beta_{0,\ell}\right)  =\cos\left(  \frac{z_{\ell}}{c_{\ell+1}}\right)
\operatorname{Im}\left(  \beta_{0,\ell}\right)  +\sin\left(  \frac{z_{\ell}%
}{c_{\ell+1}}\right)  \operatorname{Re}\left(  \beta_{0,\ell}\right) \\
&  =-\cos\left(  \frac{z_{\ell}}{c_{\ell+1}}\right)  \left(  \frac{z_{\ell}%
}{c_{\ell+1}}+\mathfrak{t}_{\ell}\right)  +\sin\left(  \frac{z_{\ell}}%
{c_{\ell+1}}\right)  \left(  1+\mathfrak{s}_{\ell}\right) \\
&  =-\frac{1}{6}\left(  \frac{z_{\ell}^{\prime}}{c_{\ell+1}}\right)
^{3}-\mathfrak{t}_{\ell}-\frac{1}{2}\left(  \frac{\tilde{z}_{\ell}}{c_{\ell
+1}}\right)  ^{2}\left(  \frac{z_{\ell}}{c_{\ell+1}}+\mathfrak{t}_{\ell
}\right)  +\mathfrak{s}_{\ell}\sin\left(  \frac{z_{\ell}}{c_{\ell+1}}\right)
\end{align*}
for some $0\leq\tilde{z}_{\ell},z_{\ell}^{\prime}\leq z_{\ell}$. Using the
estimates \eqref{sltlest} yields the assertion.
\end{proof}

\subsection{Proof of Theorem \ref{thm:finalStab}\label{sec:proofstab}}

From Proposition \ref{th:uplowbeta}, Proposition \ref{prop:refinedestbeta},
the representation \eqref{Greenrep}, definition
\eqref{fullinteriorsysfinal} and the results of Theorem \ref{th:final_rep}, we conclude
\begin{align*}
|A_{\ell+1}|  &  =\left\vert \hat{g}_{0}\right\vert \frac{\omega}{c_{N}%
}\left\vert h_{0}^{(1)}\left(  \frac{\omega}{c_{N}}\right)  \right\vert
\left\vert \frac{\operatorname{Im}\left(  \operatorname{e}^{\operatorname*{i}%
\frac{z_{\ell}}{c_{\ell+1}}}\beta_{0,\ell}\right)  }{\operatorname{e}%
^{\operatorname*{i}\frac{z_{n}}{c_{n+1}}}\beta_{0,n}}\right\vert
\lesssim\left\vert \hat{g}_{0}\right\vert \alpha^{\omega}\min\{z_{\ell}%
^{3},1\},\\
|B_{\ell}|  & =\left\vert \hat{g}_{0}\right\vert \frac{\omega}{c_{N}%
}\left\vert h_{0}^{(1)}\left(  \frac{\omega}{c_{N}}\right)  \right\vert
\left\vert \frac{\operatorname{e}^{\operatorname*{i}\frac{z_{\ell-1}}{c_{\ell}}}\beta_{0,\ell-1}}{\operatorname{e}^{\operatorname*{i}\frac{z_{n}}{c_{n+1}}%
}\beta_{0,n}}\right\vert \lesssim\left\vert \hat{g}_{0}\right\vert
\alpha^{\omega},
\end{align*}
for all $1\leq\ell\leq n$ and some $\alpha>1$. We note that we also used
\eqref{modh0j0}, to estimate the term $\frac{\omega}{c_{N}}\left\vert
h_{0}^{(1)}\left(  \frac{\omega}{c_{N}}\right)  \right\vert $. Moreover we
recall \eqref{Amn1}, \eqref{BmnN} that
\[
|A_{1}|=0=z_{0},\qquad|B_{N}|=\left\vert \frac{\omega}{c_{N}}%
h_{0}^{(1)}\left(  \frac{\omega}{c_{N}}\right)  \right\vert \left\vert \hat
{g}_{0}\right\vert =\left\vert \hat{g}_{0}\right\vert .
\]
Now we apply these estimates of $A_{\ell},B_{\ell}$ to \eqref{eq:l2estsol} and
\eqref{eq:gradl2estsol}. The combination of this with estimates of Hankel and
Bessel functions in Lemma \ref{lem:integralhankelbessel} yields%
\begin{align*}
\left\Vert u\right\Vert _{\mathcal{H}}^{2}  &  \leq2\max_{2\leq j\leq N}%
A_{j}^{2}\left(  2+\left(  \frac{c_{j}}{z_{j-1}}\right)  ^{2}\right)
+2\max_{1\leq j\leq N}B_{j}^{2}\left(  \frac{16z_{j}^{4}}{\left(  2c_{j}%
^{2}+z_{j}^{2}\right)  ^{2}}+\left(  \frac{2z_{j}}{c_{j}+z_{j}}\right)
^{2}\right) \\
&  \lesssim\left\vert \hat{g}_{0}\right\vert ^{2}\alpha^{\omega}\left\{
\max_{2\leq j\leq N}\left(  \min\{z_{j}^{6},1\}\left(  2+\left(  \frac{c_{j}%
}{z_{j-1}}\right)  ^{2}\right)  \right) \right.\\
& \phantom{\lesssim\left\vert \hat{g}_{0}\right\vert ^{2}\alpha^{\omega}}+\left.\max_{1\leq j\leq N}\left(
\frac{16z_{j}^{4}}{\left(  2c_{j}^{2}+z_{j}^{2}\right)  ^{2}}+\left(
\frac{2z_{j}}{c_{j}+z_{j}}\right)  ^{2}\right)  \right\} \\
&  \lesssim\left\vert \hat{g}_{0}\right\vert ^{2}\alpha^{\omega}.
\end{align*}

Next we will prove the pointwise estimates in Theorem \ref{thm:finalStab}. For
$r\in\tau_{j}$, $2\leq j\leq N$, we obtain%
\begin{align*}
\left\vert \hat{u}\left(  r,\boldsymbol{\xi}\right)  \right\vert  &  \lesssim\left\vert \hat{g}_{0}\right\vert
\alpha^{\omega}\left(  \min\{z_{j-1}^{3},1\}\dfrac{c_{j}}{z_{j-1}}+\dfrac
{2}{1+\dfrac{z_{j-1}}{c_{j}}}\right) \\
&  \lesssim\dfrac{\alpha^{\omega}}{1+z_{j-1}}\left\vert \hat{g}_{0}\right\vert
\leq\alpha^{\omega}\left\vert \hat{g}_{0}\right\vert
\end{align*}
and in a similar fashion
\[
\left\vert \left(  \nabla u\right)  \circ\psi\left(  r,\mathbf{\xi} \right)
\right\vert \lesssim\left\vert \hat{g}_{0}\right\vert \alpha^{\omega} \left(
\min\{z_{j-1}^{3},1\} \left(  \frac{c_{j}}{\omega r}\left(  1+ \frac{c_{j}%
}{\omega r}\right)  \right)  + \frac{\omega r}{c_{j}}\right)  \lesssim
r\tilde{\alpha}^{\omega}\left\vert \hat{g}_{0}\right\vert
\]
For the first interval $\tau_{1}$ we use that $A_{1}=0$ and obtain the same estimates.

\section{Proof of the Representation of the Green's Operator (Theorem \ref{th:final_rep})}

\label{sec:proofRep} We denote by $\mathbf{M}_{m}^{\left(  2n,i,j\right)  }$
the matrix which arises by removing the $i$-th row and the $j$-th column of
$\mathbf{M}_{m}^{\left(  2n\right)  }$. According to Cramer's rule, we have%
\[
\left(  \mathbf{\hat{M}}_{m}^{\left(  2n\right)  }\right)  _{i,j}^{-1}=\left(
-1\right)  ^{i+j}\frac{\det\mathbf{\hat{M}}_{m}^{\left(  2n,j,i\right)  }%
}{\det\mathbf{\hat{M}}_{m}^{\left(  2n\right)  }}.
\]
From \eqref{defMhat2n} and the well-known recursion formula for determinants
of tri-diagonal matrices we get
\begin{equation}
\det\mathbf{\hat{M}}_{m}^{\left(  2n,2n,i\right)  }=\left(
{\displaystyle\prod\limits_{\ell=\left\lfloor \frac{i+1}{2}\right\rfloor
}^{n-1}}\left(  \mathbf{\hat{T}}_{m}^{\left(  \ell\right)  }\right)
_{2,1}\right)  \left(  {\displaystyle\prod\limits_{\ell=\left\lfloor
\frac{i+2}{2}\right\rfloor }^{n}}\left(  \mathbf{\hat{S}}_{m}^{\left(
\ell\right)  }\right)  _{1,2}\right)  \det\mathbf{\hat{M}}_{m}^{\left(
i-1\right)  }. \label{eq:detM}%
\end{equation}
Note that
\[
\det\mathbf{\hat{S}}_{m}^{\left(  \ell\right)  }=\frac{w_{m,\ell,\ell+1,\ell
}^{2,1}}{w_{m,\ell+1,\ell,\ell}^{2,1}}%
\]
which is well defined (cf. proof of \cite[Lemma 1]{HansenPoignardVogelius2007}%
). Next, we express the determinant $\det\mathbf{\hat{M}}_{m}^{\left(
2n\right)  }$ in a recursive way. For $n\in\mathbb{N}_{\geq1}$ and
$q\in\left\{  1,2\right\}  $, let (with Kronecker's delta $\delta_{i,j}$)
\[
W_{m,\ell,q}:=\left\{
\begin{array}
[c]{ll}%
\delta_{1,q} & \text{for }\ell=0,\\
\det\left[
\begin{array}
[c]{ll}%
W_{m,\ell-1,1} & w_{m,\ell,\ell+1,\ell}^{1,q}\\
W_{m,\ell-1,2} & w_{m,\ell,\ell+1,\ell}^{2,q}%
\end{array}
\right]  & \text{for }\ell\geq1.
\end{array}
\right.
\]

\begin{lemma}
\label{lem:detM1} Let $n\geq1$. We have
\[
\det\mathbf{\hat{M}}_{m}^{\left(  2n\right)  }=\frac{W_{m,n,1}}{\prod
\limits_{\ell=1}^{n}w_{m,\ell+1,\ell,\ell}^{2,1}}%
\]
and
\[
\det\mathbf{\hat{M}}_{m}^{\left(  2n-1\right)  }=\frac{-W_{m,n,2}%
}{\displaystyle\prod\limits_{\ell=1}^{n}w_{m,\ell+1,\ell,\ell}^{2,1}}.
\]

\end{lemma}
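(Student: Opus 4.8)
The plan is to prove both identities simultaneously by induction on $n$, exploiting that each of the four quantities involved satisfies a recursion in $n$: the left‑hand sides via Laplace (cofactor) expansion of the block–tridiagonal determinant $\mathbf{\hat{M}}_{m}^{(\cdot)}$, and the right‑hand sides by simply expanding the $2\times2$ determinant defining $W_{m,n,q}$. Write $P_{n}:=\prod_{\ell=1}^{n}w_{m,\ell+1,\ell,\ell}^{2,1}$ and use the conventions $\det\mathbf{\hat{M}}_{m}^{(0)}=1$ (empty matrix) and $W_{m,0,1}=1$, $W_{m,0,2}=0$. For the base case $n=1$ one has $\mathbf{\hat{M}}_{m}^{(2)}=\mathbf{\hat{S}}_{m}^{(1)}$ and $\mathbf{\hat{M}}_{m}^{(1)}=\bigl[(\mathbf{\hat{S}}_{m}^{(1)})_{1,1}\bigr]$, so using $\det\mathbf{\hat{S}}_{m}^{(1)}=w_{m,1,2,1}^{2,1}/w_{m,2,1,1}^{2,1}$ together with the antisymmetry $w_{m,j,k,\ell}^{p,p}=-w_{m,k,j,\ell}^{p,p}$, both identities reduce to $W_{m,1,1}=w_{m,1,2,1}^{2,1}$ and $W_{m,1,2}=w_{m,1,2,1}^{2,2}$, which hold by definition.

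For the left‑hand side I would expand $\det\mathbf{\hat{M}}_{m}^{(2n)}$ along its last column. Because the superdiagonal block $\mathbf{\hat{T}}_{m}^{(n-1)}=\left[\begin{smallmatrix}0&0\\-1&0\end{smallmatrix}\right]$ has a nonzero entry only in its second row, the last column of $\mathbf{\hat{M}}_{m}^{(2n)}$ has nonzero entries only in the two rows occupied by $\mathbf{\hat{S}}_{m}^{(n)}$; expanding the two resulting minors once more along their last row (each carrying a single $\pm1$ coming from $\mathbf{\hat{S}}_{m}^{(n)}$, $\mathbf{\hat{R}}_{m}^{(n-1)}$ or $\mathbf{\hat{T}}_{m}^{(n-1)}$) collapses the whole expansion to
\begin{align*}
\det\mathbf{\hat{M}}_{m}^{(2n)}&=\frac{w_{m,n+1,n+1,n}^{1,2}\,w_{m,n,n,n}^{1,2}}{\bigl(w_{m,n+1,n,n}^{2,1}\bigr)^{2}}\det\mathbf{\hat{M}}_{m}^{(2n-2)}+\frac{w_{m,n,n+1,n}^{1,1}}{w_{m,n+1,n,n}^{2,1}}\det\mathbf{\hat{M}}_{m}^{(2n-1)},\\
\det\mathbf{\hat{M}}_{m}^{(2n-1)}&=\det\mathbf{\hat{M}}_{m}^{(2n-3)}+\frac{w_{m,n+1,n,n}^{2,2}}{w_{m,n+1,n,n}^{2,1}}\det\mathbf{\hat{M}}_{m}^{(2n-2)}.
\end{align*}
Expanding the defining $2\times2$ determinant of $W_{m,n,q}$ gives on the other side $W_{m,n,1}=W_{m,n-1,1}w_{m,n,n+1,n}^{2,1}-W_{m,n-1,2}w_{m,n,n+1,n}^{1,1}$ and $W_{m,n,2}=W_{m,n-1,1}w_{m,n,n+1,n}^{2,2}-W_{m,n-1,2}w_{m,n,n+1,n}^{1,2}$.

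In the induction step I substitute the case $n-1$ into the two determinant recursions, using $P_{n}=P_{n-1}w_{m,n+1,n,n}^{2,1}$, and prove the odd identity first so it can be fed into the even recursion. Comparing the coefficients of $W_{m,n-1,1}$ and $W_{m,n-1,2}$, the odd identity $\det\mathbf{\hat{M}}_{m}^{(2n-1)}=-W_{m,n,2}/P_{n}$ reduces to the two elementary facts $w_{m,j,k,\ell}^{p,p}=-w_{m,k,j,\ell}^{p,p}$ and $w_{m,n+1,n,n}^{2,1}+w_{m,n,n+1,n}^{1,2}=0$, both immediate from the bilinearity of the Wronskian $\mathcal{W}(\cdot,\cdot)$. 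The even identity $\det\mathbf{\hat{M}}_{m}^{(2n)}=W_{m,n,1}/P_{n}$ reduces, after clearing one factor $w_{m,n+1,n,n}^{2,1}$, to the single nontrivial relation
\[
w_{m,n+1,n+1,n}^{1,2}\,w_{m,n,n,n}^{1,2}+w_{m,n,n+1,n}^{1,1}\,w_{m,n+1,n,n}^{2,2}=w_{m,n,n+1,n}^{2,1}\,w_{m,n+1,n,n}^{2,1}.
\]
Writing $w_{m,j,k,n}^{p,q}=\det\bigl[\,\mathbf{a}_{j}^{(p)}\mid\mathbf{a}_{k}^{(q)}\,\bigr]$ with $\mathbf{a}_{j}^{(p)}:=\bigl(f_{m,p}(\kappa_{j,n}),\;f_{m,p}^{\prime}(\kappa_{j,n})/c_{j}\bigr)^{\intercal}\in\mathbb{C}^{2}$, this is precisely the two‑dimensional Grassmann--Plücker relation $\det[\mathbf{a}\mid\mathbf{b}]\det[\mathbf{c}\mid\mathbf{d}]-\det[\mathbf{a}\mid\mathbf{c}]\det[\mathbf{b}\mid\mathbf{d}]+\det[\mathbf{a}\mid\mathbf{d}]\det[\mathbf{b}\mid\mathbf{c}]=0$ applied to $\mathbf{a}=\mathbf{a}_{n}^{(1)}$, $\mathbf{b}=\mathbf{a}_{n}^{(2)}$, $\mathbf{c}=\mathbf{a}_{n+1}^{(1)}$, $\mathbf{d}=\mathbf{a}_{n+1}^{(2)}$.

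The main obstacle is the bookkeeping in the Laplace expansion: tracking cofactor signs and, after each row/column deletion, checking that the surviving minors really are $\mathbf{\hat{M}}_{m}^{(2n-2)}$ and $\mathbf{\hat{M}}_{m}^{(2n-3)}$ (which is where the special one‑entry structure of $\mathbf{\hat{R}}_{m}^{(\ell)}$ and $\mathbf{\hat{T}}_{m}^{(\ell)}$ is essential). The only genuinely algebraic point, the surviving identity in the even step, is disposed of once the vectors $\mathbf{a}_{j}^{(p)}$ are identified and the Plücker relation is invoked; the antisymmetry identities needed for the odd step and the base case are one‑line computations from the definition of $w_{m,j,k,\ell}^{p,q}$.
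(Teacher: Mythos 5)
Your proposal is correct and follows essentially the same route as the paper: induction on $n$ using the scalar tridiagonal determinant recursions for $\det\mathbf{\hat{M}}_{m}^{(k)}$, the $2\times2$ expansion defining $W_{m,n,q}$, and the antisymmetry $w_{m,j,k,\ell}^{p,q}=-w_{m,k,j,\ell}^{q,p}$. The only cosmetic difference is that the paper's even step couples $\det\mathbf{\hat{M}}_{m}^{(2n+2)}$ to $\det\mathbf{\hat{M}}_{m}^{(2n)}$ and $\det\mathbf{\hat{M}}_{m}^{(2n-1)}$ through the prestated formula $\det\mathbf{\hat{S}}_{m}^{(\ell)}=w_{m,\ell,\ell+1,\ell}^{2,1}/w_{m,\ell+1,\ell,\ell}^{2,1}$, whereas your Grassmann--Pl\"{u}cker step gives an explicit proof of exactly that quadratic Wronskian identity which the paper only quotes.
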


\begin{proof}
By induction: The case $n=1$ can be easily checked for both cases. Then, we
have
\begin{align*}
\det\mathbf{\hat{M}}_{m}^{\left(  2(n+1)\right)  }=  &  \det\mathbf{\hat{S}%
}_{m}^{\left(  n+1\right)  }\det\mathbf{\hat{M}}_{m}^{\left(  2n\right)
}+\left(  \mathbf{\hat{S}}_{m}^{\left(  n+1\right)  }\right)  _{2,2}%
\det\mathbf{\hat{M}}_{m}^{\left(  2n-1\right)  }\\
=  &  \frac{w_{m,n+1,n+2,n+1}^{2,1}}{w_{m,n+2,n+1,n+1}^{2,1}}\frac{W_{m,n,1}%
}{\prod\limits_{\ell=1}^{n}w_{m,\ell+1,\ell,\ell}^{2,1}}-\frac
{w_{m,n+1,n+2,n+1}^{1,1}}{w_{m,n+2,n+1,n+1}^{2,1}}\frac{W_{m,n,2}}%
{\prod\limits_{\ell=1}^{n}w_{m,\ell+1,\ell,\ell}^{2,1}}\\
=  &  \frac{W_{m,n+1,1}}{\prod\limits_{\ell=1}^{n+1}w_{m,\ell+1,\ell,\ell
}^{2,1}}%
\end{align*}
and, in turn,%
\begin{align*}
\det\mathbf{\hat{M}}_{m}^{\left(  2n+1\right)  }=  &  \left(  \mathbf{\hat{S}%
}_{m}^{\left(  n+1\right)  }\right)  _{1,1}\det\mathbf{\hat{M}}_{m}^{\left(
2n\right)  }+\det\mathbf{\hat{M}}_{m}^{\left(  2n-1\right)  }\\
=  &  \frac{w_{m,n+2,n+1,n+1}^{2,2}}{w_{m,n+2,n+1,n+1}^{2,1}}\frac{W_{m,n,1}%
}{\prod\limits_{\ell=1}^{n}w_{m,\ell+1,\ell,\ell}^{2,1}}-\frac{W_{m,n,2}%
}{\prod\limits_{\ell=1}^{n}w_{m,\ell+1,\ell,\ell}^{2,1}}\\
=  &  \frac{-W_{m,n+1,2}}{\prod\limits_{\ell=1}^{n+1}w_{m,\ell+1,\ell,\ell
}^{2,1}}.
\end{align*}

\end{proof}

In the next step we will derive a representation of $W_{m,n,1}$. We define the
sequence $\left(  \tilde{\beta}_{m,\ell}\right)  _{\ell=0}^{n}$ by%
\begin{equation}%
\begin{split}
\tilde{\beta}_{m,0}  &  :=1,\\
\tilde{\beta}_{m,\ell}  &  :=\frac{\tilde{\gamma}_{m,\ell}^{+}}{2}\left(
\tilde{\beta}_{m,\ell-1}-\left(  -1\right)  ^{\ell}\tilde{q}_{m,\ell}%
\overline{\tilde{\beta}_{m,\ell-1}}\right)
\end{split}
\label{eq:recursion1}%
\end{equation}
using the definition in \eqref{def:gammaqnew} for $\tilde{\gamma}_{m,\ell}%
^{+}$ and $\tilde{q}_{m,\ell}$.

\begin{lemma}
\label{lem:Wmn}For $n\in\mathbb{N}$ it holds
\[
W_{m,n,1}=\left(  -1\right)  ^{n}\tilde{\beta}_{m,n}\qquad\text{and}\qquad
W_{m,n,2}:=-\frac{\overline{\tilde{\beta}_{m,n}}-(-1)^{n}\tilde{\beta}_{m,n}%
}{2}.
\]

\end{lemma}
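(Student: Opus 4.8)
The plan is to prove the two identities simultaneously by induction on $n$, using one structural fact about the fundamental system: on the real axis $f_{m,2}$ is the real part of $f_{m,1}$. Indeed $J_\nu=\tfrac12(H_\nu^{(1)}+H_\nu^{(2)})$, $j_\nu=\tfrac12(h_\nu^{(1)}+h_\nu^{(2)})$ and $\overline{H_\nu^{(1)}(x)}=H_\nu^{(2)}(x)$, $\overline{h_\nu^{(1)}(x)}=h_\nu^{(2)}(x)$ for $x\in\mathbb{R}$ (with the trivial analogue $\cos x=\operatorname{Re}\operatorname{e}^{\operatorname*{i}x}$ in the case $d=1$), so that
\[
f_{m,2}(x)=\tfrac12\bigl(f_{m,1}(x)+\overline{f_{m,1}(x)}\bigr),\qquad f_{m,2}'(x)=\tfrac12\bigl(f_{m,1}'(x)+\overline{f_{m,1}'(x)}\bigr)\qquad\text{for all }x\in\mathbb{R}.
\]
All the arguments $\kappa_{j,\ell}=z_\ell/c_j$ appearing in the Wronskian coefficients are real, so inserting these relations into the definition of $w_{m,\ell,\ell+1,\ell}^{p,q}$ and sorting the four resulting products yields the closed forms
\begin{align*}
w_{m,\ell,\ell+1,\ell}^{1,1}&=-\tilde\gamma_{m,\ell}^{-}, & w_{m,\ell,\ell+1,\ell}^{2,1}&=-\tfrac12\bigl(\tilde\gamma_{m,\ell}^{+}+\tilde\gamma_{m,\ell}^{-}\bigr),\\
w_{m,\ell,\ell+1,\ell}^{1,2}&=-\tfrac12\bigl(\overline{\tilde\gamma_{m,\ell}^{+}}+\tilde\gamma_{m,\ell}^{-}\bigr), & w_{m,\ell,\ell+1,\ell}^{2,2}&=-\tfrac12\operatorname{Re}\bigl(\tilde\gamma_{m,\ell}^{+}+\tilde\gamma_{m,\ell}^{-}\bigr),
\end{align*}
with $\tilde\gamma_{m,\ell}^{\pm}$ as in \eqref{def:gammaqnewa}. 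Establishing these four identities is the one genuinely computational ingredient, and it is where the Bessel/Hankel structure is used; note also that $\tilde q_{m,\ell}=\tilde\gamma_{m,\ell}^{-}/\tilde\gamma_{m,\ell}^{+}$ is well defined since $\tilde\gamma_{m,\ell}^{+}\neq0$.

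For the induction I would actually carry along a slightly stronger pair of statements: $W_{m,n,1}=(-1)^n\tilde\beta_{m,n}$ together with the auxiliary linear relation $W_{m,n,2}=\tfrac12\bigl(W_{m,n,1}-(-1)^n\overline{W_{m,n,1}}\bigr)$. Granting both, the second assertion of the lemma follows by substituting $W_{m,n,1}=(-1)^n\tilde\beta_{m,n}$ and using $(-1)^n\overline{(-1)^n\tilde\beta_{m,n}}=\overline{\tilde\beta_{m,n}}$. The base case $n=0$ is immediate from $W_{m,0,1}=1=\tilde\beta_{m,0}$ and $W_{m,0,2}=0$. For the inductive step I expand the $2\times2$ determinant defining $W_{m,\ell,q}$ to get $W_{m,\ell,q}=W_{m,\ell-1,1}\,w_{m,\ell,\ell+1,\ell}^{2,q}-W_{m,\ell-1,2}\,w_{m,\ell,\ell+1,\ell}^{1,q}$, plug in the four closed forms above, and eliminate $W_{m,\ell-1,2}$ via the inductive auxiliary relation. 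In the line $q=1$ the $\tilde\gamma_{m,\ell}^{-}$-contributions cancel and one is left with $W_{m,\ell,1}=-\tfrac{\tilde\gamma_{m,\ell}^{+}}{2}\bigl(W_{m,\ell-1,1}-(-1)^\ell\tilde q_{m,\ell}\,\overline{W_{m,\ell-1,1}}\bigr)$; inserting $W_{m,\ell-1,1}=(-1)^{\ell-1}\tilde\beta_{m,\ell-1}$ and comparing with the recursion \eqref{eq:recursion1} gives $W_{m,\ell,1}=(-1)^\ell\tilde\beta_{m,\ell}$. In the line $q=2$ the same substitutions produce $W_{m,\ell,2}=-\tfrac14\bigl(\tilde\gamma_{m,\ell}^{+}+\overline{\tilde\gamma_{m,\ell}^{-}}\bigr)W_{m,\ell-1,1}+\tfrac{(-1)^\ell}{4}\bigl(\overline{\tilde\gamma_{m,\ell}^{+}}+\tilde\gamma_{m,\ell}^{-}\bigr)\overline{W_{m,\ell-1,1}}$, and a short check, using the just-derived expression of $W_{m,\ell,1}$ in terms of $W_{m,\ell-1,1}$, shows this equals $\tfrac12\bigl(W_{m,\ell,1}-(-1)^\ell\overline{W_{m,\ell,1}}\bigr)$; this closes the induction.

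The step I expect to need the most care is the bookkeeping of complex conjugates and of the parity sign $(-1)^\ell$ in the $q=2$ line: the four terms $\tilde\gamma_{m,\ell}^{\pm}W_{m,\ell-1,1}$ and $\overline{\tilde\gamma_{m,\ell}^{\pm}}\,\overline{W_{m,\ell-1,1}}$ must pair up exactly, and a single $(-1)^\ell\leftrightarrow(-1)^{\ell-1}$ slip also breaks the cancellation in the $q=1$ line. Everything else is routine algebra; I would write the whole argument as one induction so that the auxiliary relation is available at every stage.
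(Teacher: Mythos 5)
Your proposal is correct and follows essentially the same route as the paper: establish the four closed-form identities for $w_{m,\ell,\ell+1,\ell}^{p,q}$ via $f_{m,2}=\tfrac12(f_{m,1}+\overline{f_{m,1}})$, then prove both claims by a simultaneous induction using the two-term recurrence coming from the $2\times2$ determinant. Your only cosmetic departure is to carry the second invariant in the equivalent form $W_{m,n,2}=\tfrac12\bigl(W_{m,n,1}-(-1)^n\overline{W_{m,n,1}}\bigr)$ (and to start the base case at $n=0$ rather than $n=1$), which amounts to the same computation the paper performs after substituting $W_{m,n-1,1}=(-1)^{n-1}\tilde\beta_{m,n-1}$.
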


\begin{proof}
By using the definition of $w_{m,n,n+1,n}^{2,1}$ and the relation $f_{m,2}=\frac{1}{2}\left(f_{m,1}+\overline{f_{m,1}}\right)$ it is easy to verify%
\[%
\begin{array}
[c]{ll}%
w_{m,\ell,\ell+1,\ell}^{1,1}=-\tilde{\gamma}_{m,\ell}^{-}, & w_{m,\ell
,\ell+1,\ell}^{1,2}=-\frac{1}{2}\left(  \overline{\tilde{\gamma}_{m,\ell}^{+}%
}+\tilde{\gamma}_{m,\ell}^{-}\right)  ,\\
w_{m,\ell,\ell+1,\ell}^{2,1}=-\frac{1}{2}\left(  \tilde{\gamma}_{m,\ell}%
^{+}+\tilde{\gamma}_{m,\ell}^{-}\right)  , & w_{m,\ell,\ell+1,\ell}%
^{2,2}=-\frac{1}{4}\left(  \tilde{\gamma}_{m,\ell}^{+}+\tilde{\gamma}_{m,\ell
}^{-}+\overline{\tilde{\gamma}_{m,\ell}^{+}}+\overline{\tilde{\gamma}_{m,\ell
}^{-}}\right)  .
\end{array}
\]
Now we start to prove the statement by induction. For $n=1$ we have
\begin{align*}
W_{m,1,1}=  &  w_{m,1,2,1}^{2,1}=-\frac{1}{2}\left(  \tilde{\gamma}_{m,1}%
^{+}+\tilde{\gamma}_{m,1}^{-}\right) \\
=  &  -\frac{\tilde{\gamma}_{m,1}^{+}}{2}\left(  1+\tilde{q}_{m,1}\right)
=-\tilde{\beta}_{m,1},\\
W_{m,1,2}=  &  w_{m,1,2,1}^{2,2}=-\frac{1}{4}\left(  \tilde{\gamma}_{m,1}%
^{+}+\tilde{\gamma}_{m,1}^{-}+\overline{\tilde{\gamma}_{m,1}^{+}}%
+\overline{\tilde{\gamma}_{m,1}^{-}}\right) \\
=  &  -\frac{1}{2}\left(  \frac{\tilde{\gamma}_{m,1}^{+}}{2}\left(
1+\tilde{q}_{m,1}\right)  +\frac{\overline{\tilde{\gamma}_{m,1}^{+}}}%
{2}\left(  1+\overline{\tilde{q}_{m,1}}\right)  \right) \\
=  &  -\frac{\overline{\tilde{\beta}_{m,1}}+\tilde{\beta}_{m,1}}{2}.
\end{align*}
Assume the statement is true for $n-1$. We have from the definition of
$W_{m,n,1}$
\begin{align*}
W_{m,n,1}=  &  \left(  W_{m,n-1,1}w_{m,n,n+1,n}^{2,1}-W_{m,n-1,2}%
w_{m,n,n+1,n}^{1,1}\right) \\
=  &  \frac{1}{2}\left(  (-1)^{n}\tilde{\beta}_{m,n-1}\left(  \tilde{\gamma
}_{m,n}^{+}+\tilde{\gamma}_{m,n}^{-}\right)  -\overline{\tilde{\beta}_{m,n-1}%
}\tilde{\gamma}_{m,n}^{-}-(-1)^{n}\tilde{\beta}_{m,n-1}\tilde{\gamma}%
_{m,n}^{-}\right) \\
=  &  (-1)^{n}\tilde{\beta}_{m,n}.
\end{align*}
For $W_{m,n,2}$, we compute%
\begin{align*}
W_{m,{n},2}=  &  \left(  W_{m,{n}-1,1}w_{m,n,n+1,n}^{2,2}-W_{m,n-1,2}%
w_{m,n,n+1,n}^{1,2}\right) \\
=  &  \frac{1}{4}\left(  (-1)^{n}\tilde{\beta}_{m,n-1}\left(  \tilde{\gamma
}_{m,n}^{+}+\overline{\tilde{\gamma}_{m,n}^{-}}\right)  -\overline
{\tilde{\beta}_{m,n-1}}\left(  \overline{\tilde{\gamma}_{m,n}^{+}}%
+\tilde{\gamma}_{m,n}^{-}\right)  \right) \\
=  &  -\frac{1}{2}\left(  \overline{\tilde{\beta}_{m,n}}-(-1)^{n}\tilde{\beta
}_{m,n}\right)  .
\end{align*}%
\end{proof}

The combination of representation in \eqref{eq:detM} and applying Lemma
\ref{lem:detM1} and \ref{lem:Wmn} yields for odd $i=2\ell-1$, $1\leq\ell\leq
n$
\begin{equation}%
\begin{array}
[c]{rl}%
\left(  \mathbf{\hat{M}}_{m}^{\left(  2n\right)  }\right)  _{2\ell-1,2n}^{-1}
& =-\dfrac{\det\mathbf{\hat{M}}_{m}^{\left(  2n,2n,2\ell-1\right)  }}%
{\det\mathbf{\hat{M}}_{m}^{\left(  2n\right)  }}=\left(  -1\right)  ^{n-\ell
}\dfrac{\left(  {\displaystyle\prod\limits_{k=\ell}^{n}}\mathbf{\hat{S}}%
_{1,2}^{\left(  k\right)  }\right)  \det\mathbf{\hat{M}}_{m}^{\left(
2(\ell-1)\right)  }}{\det\mathbf{\hat{M}}_{m}^{\left(  2n\right)  }}\\
& =-\left(  {\displaystyle\prod\limits_{k=\ell}^{n}}w_{m,k+1,k+1,k}%
^{1,2}\right)  \dfrac{\tilde{\beta}_{m,\ell-1}}{\tilde{\beta}_{m,n}}.
\end{array}
\label{eq:Mbetaa}%
\end{equation}
For $i=2\ell$ even, $1\leq\ell\leq n$, we compute%
\begin{align}
\left(  \mathbf{\hat{M}}_{m}^{\left(  2n\right)  }\right)  _{2\ell,2n}^{-1}
&  =\frac{\det\mathbf{\hat{M}}_{m}^{\left(  2n,2n,2\ell\right)  }}%
{\det\mathbf{\hat{M}}_{m}^{\left(  2n\right)  }}=-\left(  -1\right)  ^{n-\ell
}{\displaystyle}\left(  {\prod\limits_{k=\ell+1}^{n}}w_{m,k+1,k+1,k}%
^{1,2}\right)  {\frac{W_{m,\ell,2}}{W_{m,n,1}}}\nonumber\\
&  =\frac{(-1)^{\ell}\overline{\tilde{\beta}_{m,\ell}}-\tilde{\beta}_{m,\ell}%
}{2\tilde{\beta}_{m,n}}{\displaystyle\prod\limits_{k=\ell+1}^{n}%
}w_{m,k+1,k+1,k}^{1,2}. \label{eq:Mbetab}%
\end{align}
Finally, we introduce
\[
\beta_{m,\ell}=\frac{\mathrm{e}^{-\operatorname*{i}\frac{z_{\ell}}{c_{\ell+1}%
}}}{\displaystyle\prod\limits_{k=1}^{\ell}w_{m,k+1,k+1,k}^{1,2}}\tilde{\beta
}_{m,\ell},\qquad0\leq\ell\leq n.
\]

Recalling Remark \ref{rmk:wronskian}, one can easily check that the recursion
for $\beta_{m,\ell}$ given in \eqref{eq:recursion2m0} follows from the
recursion of $\tilde{\beta}_{m,\ell}$ defined in \eqref{eq:recursion1}. The
representations \eqref{eq:repMGreen} are a direct consequence of the
definition of $\beta_{m,\ell}$ and equations \eqref{eq:Mbetaa} and
\eqref{eq:Mbetab}.

\section{Some Basic Facts from Linear Algebra}

\begin{lemma}
\label{lem:A-prod}Let%
\[
\mathbf{A}_{\ell}:=\left[
\begin{array}
[c]{cc}%
a_{\ell} & b_{\ell}\\
b_{\ell} & a_{\ell}%
\end{array}
\right]  .
\]
Then%
\[
\mathbf{A}_{k}\cdots\mathbf{A}_{1}=\frac{1}{2}\left[
\begin{array}
[c]{cc}%
\prod\limits_{\ell=1}^{k}\left(  a_{\ell}+b_{\ell}\right)  +\prod
\limits_{\ell=1}^{k}\left(  a_{\ell}-b_{\ell}\right)  & \prod\limits_{\ell
=1}^{k}\left(  a_{\ell}+b_{\ell}\right)  -\prod\limits_{\ell=1}^{k}\left(
a_{\ell}-b_{\ell}\right) \\
\prod\limits_{\ell=1}^{k}\left(  a_{\ell}+b_{\ell}\right)  -\prod
\limits_{\ell=1}^{k}\left(  a_{\ell}-b_{\ell}\right)  & \prod\limits_{\ell
=1}^{k}\left(  a_{\ell}+b_{\ell}\right)  +\prod\limits_{\ell=1}^{k}\left(
a_{\ell}-b_{\ell}\right)
\end{array}
\right]  .
\]

\end{lemma}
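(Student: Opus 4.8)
The plan is to use that all the matrices $\mathbf{A}_\ell$ are simultaneously diagonalized by one fixed basis, so that the product trivializes in the eigenbasis. Concretely, set $\mathbf{P}:=\begin{bmatrix}1&1\\1&-1\end{bmatrix}$; then $\mathbf{P}^{2}=2\mathbf{I}$, hence $\mathbf{P}^{-1}=\tfrac12\mathbf{P}$, and a one-line check gives $\mathbf{P}^{-1}\mathbf{A}_\ell\mathbf{P}=\operatorname{diag}(a_\ell+b_\ell,\,a_\ell-b_\ell)$ for every $\ell$ (the columns of $\mathbf{P}$ are the common eigenvectors, with eigenvalues $a_\ell\pm b_\ell$). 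Note also that the $\mathbf{A}_\ell$ pairwise commute, being affine combinations of $\mathbf{I}$ and $\begin{bmatrix}0&1\\1&0\end{bmatrix}$, so the ordering of the product is immaterial anyway.

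First I would insert $\mathbf{I}=\mathbf{P}\mathbf{P}^{-1}$ between consecutive factors to obtain $\mathbf{A}_k\cdots\mathbf{A}_1=\mathbf{P}\bigl(\mathbf{P}^{-1}\mathbf{A}_k\mathbf{P}\bigr)\cdots\bigl(\mathbf{P}^{-1}\mathbf{A}_1\mathbf{P}\bigr)\mathbf{P}^{-1}$, then use that a product of diagonal matrices is diagonal with entrywise products, giving $\mathbf{A}_k\cdots\mathbf{A}_1=\mathbf{P}\operatorname{diag}(p,q)\mathbf{P}^{-1}$ with $p:=\prod_{\ell=1}^k(a_\ell+b_\ell)$ and $q:=\prod_{\ell=1}^k(a_\ell-b_\ell)$. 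Carrying out the final product $\mathbf{P}\operatorname{diag}(p,q)\bigl(\tfrac12\mathbf{P}\bigr)=\tfrac12\begin{bmatrix}p+q&p-q\\p-q&p+q\end{bmatrix}$ yields exactly the claimed identity.

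An alternative, equally short route is induction on $k$: the case $k=1$ is the definition, and the inductive step amounts to multiplying $\mathbf{A}_{k+1}$ against the asserted $k$-fold product and checking by elementary $2\times2$ arithmetic that the combinations $(a_{k+1}\pm b_{k+1})$ correctly update $p$ and $q$. Either way there is no genuine obstacle here — this is a purely computational linear-algebra fact — so the only point requiring care is keeping the factor $\tfrac12$ coming from $\mathbf{P}^{-1}$ bookkept consistently throughout.
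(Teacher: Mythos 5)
Your argument is correct. The paper itself does not spell out a proof --- it merely states that the lemma ``follows in a straightforward way by induction and is skipped'' --- so the intended route is the inductive one you mention at the end. Your primary argument is genuinely different: you exploit the fact that every $\mathbf{A}_\ell = a_\ell\mathbf{I}+b_\ell\mathbf{J}$ (with $\mathbf{J}$ the flip matrix) is diagonalized by the fixed orthogonal basis $\tfrac{1}{\sqrt2}(1,1)^{\!\intercal}$, $\tfrac{1}{\sqrt2}(1,-1)^{\!\intercal}$, insert $\mathbf{P}\mathbf{P}^{-1}$ between consecutive factors, and read off the product of diagonal matrices. The computations ($\mathbf{P}^2=2\mathbf{I}$, $\mathbf{P}^{-1}\mathbf{A}_\ell\mathbf{P}=\operatorname{diag}(a_\ell+b_\ell,\,a_\ell-b_\ell)$, and the final conjugation producing $\tfrac12\bigl[\begin{smallmatrix}p+q&p-q\\p-q&p+q\end{smallmatrix}\bigr]$) all check out. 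The induction the paper has in mind is completely elementary and requires no structural insight; your diagonalization argument is more conceptual, explaining \emph{why} the closed form has the symmetric sum/difference structure (it is the spectral decomposition in the $\pm 1$ eigenbasis of $\mathbf{J}$) and making the commutativity of the $\mathbf{A}_\ell$ transparent. Both are valid; you already flagged the factor of $\tfrac12$ as the one bookkeeping item to watch, and you handled it correctly.
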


The proof of this lemma follows in a straightforward way by induction and is
skipped. To estimate the matrix product in Lemma \ref{lem:A-prod} for positive
coefficients we need the following lemma.

\begin{lemma}
\label{lem:A-estprod} For $0<b_{\ell}<a_{\ell}$, one has%
\begin{align*}
\prod\limits_{\ell=1}^{k}\left(  a_{\ell}+b_{\ell}\right)  +\prod
\limits_{\ell=1}^{k}\left(  a_{\ell}-b_{\ell}\right)   &  \leq2\left(
\prod\limits_{\ell=1}^{k}a_{\ell}\right)  \cosh\left(  \sum_{\ell=1}^{k}%
\frac{b_{\ell}}{a_{\ell}}\right)  ,\\
\prod\limits_{\ell=1}^{k}\left(  a_{\ell}+b_{\ell}\right)  -\prod
\limits_{\ell=1}^{k}\left(  a_{\ell}-b_{\ell}\right)   &  \leq2\left(
\prod\limits_{\ell=1}^{k}a_{\ell}\right)  \sinh\left(  \sum_{\ell=1}^{k}%
\frac{b_{\ell}}{a_{\ell}}\right)  .
\end{align*}

\end{lemma}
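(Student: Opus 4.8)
The plan is to reduce both inequalities to a normalized form and then prove that form by a single induction on $k$ in which the two estimates are carried along simultaneously.

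First I would set $t_\ell := b_\ell/a_\ell \in (0,1)$ and write $P_+^{(k)} := \prod_{\ell=1}^k (1+t_\ell)$, $P_-^{(k)} := \prod_{\ell=1}^k(1-t_\ell)$ and $s_k := \sum_{\ell=1}^k t_\ell \ge 0$. Dividing the asserted inequalities by the positive number $\prod_{\ell=1}^k a_\ell$, the claim is equivalent to
\[
P_+^{(k)} + P_-^{(k)} \le 2\cosh(s_k), \qquad P_+^{(k)} - P_-^{(k)} \le 2\sinh(s_k).
\]
Since $1-t_\ell \in (0,1)$ we have $P_-^{(k)} > 0$, and since $1+t_\ell \ge 1-t_\ell$ termwise we also get $P_+^{(k)} \ge P_-^{(k)} > 0$; both facts are used below.

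For $k=1$ the two inequalities read $2 \le 2\cosh(t_1)$ and $2t_1 \le 2\sinh(t_1)$, which hold because $\cosh x \ge 1$ and $\sinh x \ge x$ for $x\ge 0$. For the inductive step, writing $s := s_{k-1}$ and $t := t_k$, I peel off the last factor:
\begin{align*}
P_+^{(k)} + P_-^{(k)} &= \bigl(P_+^{(k-1)} + P_-^{(k-1)}\bigr) + t\bigl(P_+^{(k-1)} - P_-^{(k-1)}\bigr),\\
P_+^{(k)} - P_-^{(k)} &= \bigl(P_+^{(k-1)} - P_-^{(k-1)}\bigr) + t\bigl(P_+^{(k-1)} + P_-^{(k-1)}\bigr).
\end{align*}
Using the induction hypothesis for \emph{both} parts together with $t\ge 0$ and $P_+^{(k-1)} - P_-^{(k-1)} \ge 0$, the right-hand sides are bounded by $2\cosh s + 2t\sinh s$ and $2\sinh s + 2t\cosh s$, respectively. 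It then suffices to verify
\[
\cosh s + t\sinh s \le \cosh(s+t), \qquad \sinh s + t\cosh s \le \sinh(s+t),
\]
which, by the addition formulas for $\cosh$ and $\sinh$, are equivalent to
\[
\cosh s\,(\cosh t - 1) + \sinh s\,(\sinh t - t) \ge 0, \qquad \sinh s\,(\cosh t - 1) + \cosh s\,(\sinh t - t) \ge 0,
\]
and both hold since $\cosh t \ge 1$, $\sinh t \ge t$, $\cosh s > 0$ and $\sinh s \ge 0$.

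The argument is essentially routine; the only point that needs a little care is that the two inequalities must be proved jointly, because in the inductive step the bound on $P_+^{(k)} + P_-^{(k)}$ invokes the hypothesis for $P_+^{(k-1)} - P_-^{(k-1)}$ and vice versa. One should also be sure to record the monotonicity fact $P_+^{(k)} \ge P_-^{(k)} \ge 0$, which is exactly what guarantees that the nonnegative factor $t$ multiplies a nonnegative quantity in the estimates above.
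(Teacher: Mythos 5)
Your proof is correct, but it takes a genuinely different route from the paper. Both arguments start with the same normalization $t_\ell=b_\ell/a_\ell$, but from there the paper expands $\prod(1+t_\ell)\pm\prod(1-t_\ell)$ into sums of products $\prod_i t_i^{\alpha_i}$ over $0$--$1$ multi-indices of even (resp.\ odd) weight, bounds each such block by the multinomial inequality $\bigl(\sum_i t_i\bigr)^n\geq n!\sum_{|\alpha|=n,\,\alpha\in\{0,1\}^k}\prod_i t_i^{\alpha_i}$, and then compares term by term with the partial sums of the power series of $\cosh$ and $\sinh$. You instead run a coupled induction on $k$: peeling off the last factor mixes the ``$+$'' and ``$-$'' quantities, you invoke both induction hypotheses simultaneously (using $P_+^{(k-1)}\geq P_-^{(k-1)}\geq 0$ so that multiplication by $t\geq 0$ preserves the bounds), and you close the step via the addition formulas together with $\cosh t\geq 1$ and $\sinh t\geq t$. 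Your version avoids the combinatorial expansion and the multinomial estimate entirely, and the need to prove the two inequalities jointly is exactly the point you correctly flag; the paper's expansion, on the other hand, is a one-shot argument that exhibits explicitly which terms of the hyperbolic series dominate which products of the $t_i$. Either proof is adequate for the way the lemma is used (estimating $p_{j,\ell}^{(1)},p_{j,\ell}^{(2)}$).
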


\begin{proof}
We compute
\[
\prod\limits_{\ell=1}^{k}\left(  a_{\ell}+b_{\ell}\right)  \pm\prod
\limits_{\ell=1}^{k}\left(  a_{\ell}-b_{\ell}\right)  =\left(  \prod
\limits_{\ell=1}^{k}a_{\ell}\right)  \left(  \prod\limits_{\ell=1}^{k}\left(
1+\frac{b_{\ell}}{a_{\ell}}\right)  \pm\prod\limits_{\ell=1}^{k}\left(
1-\frac{b_{\ell}}{a_{\ell}}\right)  \right)
\]
and recall that for $0<\gamma_{\ell}\in\mathbb{R}$
\begin{align*}
\prod\limits_{\ell=1}^{k}\left(  1+\gamma_{\ell}\right)  +\prod\limits_{\ell
=1}^{k}\left(  1-\gamma_{\ell}\right)   &  =2\sum_{\ell=0}^{\lfloor\frac{k}%
{2}\rfloor}\sum_{\substack{\alpha\in\{0,1\}^{k},\\|\alpha|=2\ell}%
}\prod\limits_{i=1}^{k}\gamma_{i}^{\alpha_{i}},\\
\prod\limits_{\ell=1}^{k}\left(  1+\gamma_{\ell}\right)  -\prod\limits_{\ell
=1}^{k}\left(  1-\gamma_{\ell}\right)   &  =2\sum_{\ell=0}^{\lfloor\frac
{k-1}{2}\rfloor}\sum_{\substack{\alpha\in\{0,1\}^{k},\\|\alpha|=2\ell+1}%
}\prod\limits_{i=1}^{k}\gamma_{i}^{\alpha_{i}}%
\end{align*}
Finally, we note that for any $n\in\mathbb{N}$
\[
\left(  \sum_{i=1}^{k}\gamma_{i}\right)  ^{n}=\sum_{\substack{\alpha
\in\mathbb{N}^{k}\\|\alpha|=n}}\frac{n!}{\alpha_{1}!\cdots\alpha_{k}!}%
\prod_{i=1}^{k}\gamma_{i}^{\alpha_{i}}\geq n!\sum_{\substack{\alpha
\in\{0,1\}^{k}\\|\alpha|=n}}\prod_{i=1}^{k}\gamma_{i}^{\alpha_{i}}.
\]
and
\begin{align*}
\cosh\left(  \sum_{i=1}^{k}\gamma_{i}\right)   &  =\sum_{\ell=0}^{\infty}%
\frac{\left(  \sum_{i=1}^{k}\gamma_{i}\right)  ^{2\ell}}{(2\ell)!}\geq
\sum_{n=0}^{n}\frac{\left(  \sum_{i=1}^{k}\gamma_{i}\right)  ^{2n}}{(2n)!},\\
\sinh\left(  \sum_{i=1}^{k}\gamma_{i}\right)   &  =\sum_{\ell=0}^{\infty}%
\frac{\left(  \sum_{i=1}^{k}\gamma_{i}\right)  ^{2\ell+1}}{(2\ell+1)!}\geq
\sum_{\ell=0}^{n}\frac{\left(  \sum_{i=1}^{k}\gamma_{i}\right)  ^{2\ell+1}%
}{(2\ell+1)!}.
\end{align*}

\end{proof}

\section{Some facts about Hankel and Bessel functions}

In this section, we state some properties of spherical Hankel and Bessel functions.

\begin{lemma}
\label{lem:hankelpiecewise} It holds
\[
h_{0}^{(1)}(x)=\frac{1}{x}\left(  \sin x+\operatorname*{i}\cos x\right)
\quad\text{and\quad}j_{0}(x)=\frac{\sin x}{x}%
\]
In particular it holds for $x\neq0$%
\begin{equation}
x\left\vert h_{0}(x)\right\vert =1\quad\text{and\quad}\left\vert xj_{0}\left(
x\right)  \right\vert \leq\left\vert \sin x\right\vert \leq\frac{2\left\vert
x\right\vert }{1+\left\vert x\right\vert } \label{modh0j0}%
\end{equation}
and
\begin{equation}
\left\vert \left(  h_{0}^{(1)}\right)  ^{\prime}\left(  x\right)  \right\vert
^{2}=\frac{1}{x^{2}}+\frac{1}{x^{4}}\quad\text{and\quad}\left\vert
j_{0}^{\prime}(x)\right\vert ^{2}=\left(  \frac{\cos x}{x}-\frac{\sin x}%
{x^{2}}\right)  ^{2} \label{modh0j02}%
\end{equation}

\end{lemma}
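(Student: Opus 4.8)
The plan is to reduce every claim to the elementary closed forms for the order-zero spherical Bessel and Hankel functions and then to compute directly. First I would recall (see \cite{NIST:DLMF}; these follow from $j_{0}=\sqrt{\pi/(2x)}\,J_{1/2}$, the analogous formula for $H^{(1)}_{1/2}$, and the normalisation $\mathfrak{c}_{3}=\sqrt{\pi/2}$ fixed in Section~\ref{subsec:spherical}) that
\[
j_{0}(x)=\frac{\sin x}{x},\qquad h_{0}^{(1)}(x)=\frac{1}{x}\bigl(\sin x+\operatorname*{i}\cos x\bigr),
\]
which are exactly the two displayed representations. All remaining assertions are then algebra with these expressions.

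For the modulus identities \eqref{modh0j0} and \eqref{modh0j02} I would substitute. Since $|h_{0}^{(1)}(x)|^{2}=x^{-2}(\sin^{2}x+\cos^{2}x)=x^{-2}$, one gets $x\,|h_{0}^{(1)}(x)|=1$. Differentiating term by term,
\[
\bigl(h_{0}^{(1)}\bigr)'(x)=\Bigl(\frac{\cos x}{x}-\frac{\sin x}{x^{2}}\Bigr)+\operatorname*{i}\Bigl(-\frac{\sin x}{x}-\frac{\cos x}{x^{2}}\Bigr),
\]
and after squaring and summing the real and imaginary parts the cross terms $\pm 2\sin x\cos x/x^{3}$ cancel, while $\sin^{2}x+\cos^{2}x=1$ collapses the remainder to $1/x^{2}+1/x^{4}$. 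The value of $|j_{0}'(x)|^{2}$ is read off directly from $j_{0}'(x)=\cos x/x-\sin x/x^{2}$.

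It remains to prove the inequality $|x j_{0}(x)|=|\sin x|\leq 2|x|/(1+|x|)$. Here I would combine the elementary bounds $|\sin x|\leq|x|$ and $|\sin x|\leq 1$, i.e. $|\sin x|\leq\min\{|x|,1\}$, with a case distinction: if $|x|\leq 1$ then $2|x|/(1+|x|)\geq|x|\geq|\sin x|$ (the first inequality being equivalent to $|x|\leq 1$), and if $|x|>1$ then $2|x|/(1+|x|)>1\geq|\sin x|$ (the first inequality being equivalent to $|x|>1$). Together these give the asserted chain of inequalities.

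I do not expect a genuine obstacle: the lemma is a bundle of standard order-zero spherical-Bessel identities together with one elementary trigonometric estimate. The only points requiring mild care are keeping the sign in front of $\cos x$ in the closed form of $h_{0}^{(1)}$ consistent with the normalisation convention of Section~\ref{subsec:spherical}, and the (trivial) case split $|x|\leq 1$ versus $|x|>1$ in the last estimate.
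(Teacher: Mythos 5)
Your proposal is correct and matches the paper's treatment: the paper states this lemma without proof, as standard order-zero spherical Bessel/Hankel identities, and your direct substitution of the closed forms together with the elementary bound $|\sin x|\leq\min\{|x|,1\}\leq 2|x|/(1+|x|)$ is exactly the intended argument. One caveat, which affects none of the modulus identities you verify: under the usual convention $h_{0}^{(1)}(x)=-\operatorname*{i}\operatorname{e}^{\operatorname*{i}x}/x=\bigl(\sin x-\operatorname*{i}\cos x\bigr)/x$, so the closed form displayed in the lemma (and reproduced in your proof) carries the conjugate sign; since only $|h_{0}^{(1)}|$ and $|(h_{0}^{(1)})'|$ enter here and later, this is immaterial.
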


\begin{lemma}
\label{lem:integralhankelbessel}For $d=3$ and $m=0$ the fundamental system
satisfies the integral estimates%
\begin{align*}
\left\Vert h_{0}^{(1)}\left(  \frac{\omega}{c_{j}}\cdot\right)  \mathfrak{g}%
_{0}\right\Vert _{L^{2}\left(  \tau_{j}\right)  }^{2}  &  \leq\left(
\frac{c_{j}}{\omega}\right)  ^{2}h_{j},\\
\left\Vert \left(  h_{0}^{(1)}\right)  ^{\prime}\left(  \frac{\omega}{c_{j}%
}\cdot\right)  \mathfrak{g}_{0}\right\Vert _{L^{2}\left(  \tau_{j}\right)
}^{2}  &  \leq\left(  1+\left(  \frac{c_{j}}{z_{j-1}}\right)  ^{2}\right)
\left(  \frac{c_{j}}{\omega}\right)  ^{2}h_{j},\\
\left\Vert j_{0}\left(  \frac{\omega}{c_{j}}\cdot\right)  \mathfrak{g}%
_{0}\right\Vert _{L^{2}\left(  \tau_{j}\right)  }^{2}  &  \leq\left(
\frac{2z_{j}}{c_{j}+z_{j}}\right)  ^{2}\left(  \frac{c_{j}}{\omega}\right)
^{2}h_{j},\\
\left\Vert j_{0}^{\prime}\left(  \frac{\omega}{c_{j}}\cdot\right)
\mathfrak{g}_{0}\right\Vert _{L^{2}\left(  \tau_{j}\right)  }^{2}  &
\leq\frac{16z_{j}^{4}}{\left(  2c_{j}^{2}+z_{j}^{2}\right)  ^{2}}\left(
\frac{c_{j}}{\omega}\right)  ^{2}h_{j},
\end{align*}
as well as the pointwise estimates for $r\in\tau_{j}$ (with $y=\omega r/c_{j}%
$)%
\[%
\begin{array}
[c]{lll}%
\left\vert h_{0}^{(1)}\left(  y\right)  \right\vert \leq\dfrac{1}{y}, & \quad
& \left\vert \left(  h_{0}^{(1)}\right)  ^{\prime}\left(  y\right)
\right\vert \leq\dfrac{1}{y}\left(  1+\dfrac{1}{y}\right)  ,\\
\left\vert j_{0}\left(  y\right)  \right\vert \leq\dfrac{2}{1+y}, & \quad &
\left\vert j_{0}^{\prime}\left(  y\right)  \right\vert \leq\dfrac{4y}{2+y^{2}%
}.
\end{array}
\]

\end{lemma}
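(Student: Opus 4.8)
The plan is to reduce everything to the closed-form expressions for $h_0^{(1)}$ and $j_0$ collected in Lemma \ref{lem:hankelpiecewise}: first I would prove the four pointwise bounds, and then feed them into the four $L^2$-estimates after a single change of variables.

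I would begin with the pointwise estimates (here the factor $\mathfrak{g}_0$ plays no role). From $h_0^{(1)}(y)=\frac1y\big(\sin y+\operatorname*{i}\cos y\big)$ one reads off $|h_0^{(1)}(y)|=1/y$, which is already contained in \eqref{modh0j0}; from \eqref{modh0j02} the identity $|(h_0^{(1)})'(y)|^2=\frac1{y^2}+\frac1{y^4}=\frac1{y^2}\big(1+\frac1{y^2}\big)\le\frac1{y^2}\big(1+\frac1y\big)^2$ gives the second bound after a square root. The bound $|j_0(y)|\le\frac2{1+y}$ is immediate from $|yj_0(y)|\le\frac{2y}{1+y}$ in \eqref{modh0j0}. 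The only pointwise bound requiring real work is $|j_0'(y)|\le\frac{4y}{2+y^2}$: by \eqref{modh0j02} this amounts to showing $|y\cos y-\sin y|\le\frac{4y^3}{2+y^2}$, and here I would use that $\frac{d}{dy}(y\cos y-\sin y)=-y\sin y$ with zero value at $y=0$, so $|y\cos y-\sin y|\le\int_0^y t|\sin t|\,dt\le y^3/3$, while trivially $|y\cos y-\sin y|\le y+1\le2y$ for $y\ge1$. A short case split then closes it: for $y\le\sqrt{10}$ use $\frac{y^3}{3}\le\frac{4y^3}{2+y^2}$ (i.e.\ $2+y^2\le12$), and for $y>\sqrt{10}$ use $2y\le\frac{4y^3}{2+y^2}$ (i.e.\ $y^2\ge2$).

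For the integral estimates I would substitute $u=\frac{\omega}{c_j}r$ in each $\int_{x_{j-1}}^{x_j}\big|f_{0,k}\big(\tfrac{\omega}{c_j}r\big)\big|^2r^2\,dr$; with $r^2=\frac{c_j^2}{\omega^2}u^2$, $dr=\frac{c_j}{\omega}\,du$ and endpoints $z_{j-1}/c_j$, $z_j/c_j$ at distance $\omega h_j/c_j$, every case becomes $\frac{c_j^3}{\omega^3}\int_{z_{j-1}/c_j}^{z_j/c_j}|f_{0,k}(u)|^2u^2\,du$. For $f_{0,1}=h_0^{(1)}$ the integrand $|h_0^{(1)}(u)|^2u^2\equiv1$, so the first estimate holds with equality; for its derivative $|(h_0^{(1)})'(u)|^2u^2=1+\frac1{u^2}$ integrates exactly to $\frac{\omega h_j}{c_j}+c_j\big(\frac1{z_{j-1}}-\frac1{z_j}\big)=\frac{\omega h_j}{c_j}\big(1+\frac{c_j^2}{z_{j-1}z_j}\big)$, and $z_{j-1}z_j\ge z_{j-1}^2$ yields the factor $1+(c_j/z_{j-1})^2$ (meaningful for $j\ge2$; for $j=1$ this term never enters since $A_1=0$). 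For $f_{0,2}=j_0$ one has $|j_0(u)|^2u^2=\sin^2u$, and since $t\mapsto\frac{2t}{1+t}$ is increasing, $|\sin u|\le\frac{2u}{1+u}\le\frac{2z_j/c_j}{1+z_j/c_j}=\frac{2z_j}{c_j+z_j}$ for $u\le z_j/c_j$; integrating this constant square over an interval of length $\omega h_j/c_j$ gives the third estimate. Finally $|j_0'(u)|^2u^2\le16\big(\frac{u^2}{2+u^2}\big)^2$ by the pointwise bound, and monotonicity of $t\mapsto\frac{t}{2+t}$ gives $\frac{u^2}{2+u^2}\le\frac{z_j^2}{2c_j^2+z_j^2}$ on the interval, which after integration yields the fourth estimate.

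I expect the only genuine obstacle to be the pointwise bound on $j_0'$, equivalently the estimate of $|y\cos y-\sin y|$: near the origin a Taylor/integral argument gives cubic decay of the numerator, but this must be matched against its $\sim y$ growth for large $y$, and the two regimes combine into the single bound $\frac{4y^3}{2+y^2}$ only after the case distinction above. Everything else is routine one-variable calculus plus the two elementary monotonicity observations, together with keeping track of the $c_j/\omega$ prefactors produced by the substitution.
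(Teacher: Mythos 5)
Your proposal is correct and follows essentially the same route as the paper: reduce everything to the closed forms of Lemma \ref{lem:hankelpiecewise}, change variables $y=\omega r/c_j$ in the $L^2$-integrals, and close each case by an elementary pointwise bound plus monotonicity of the resulting rational envelope. The one place where you genuinely diverge from the paper is the pointwise estimate for $j_0'$: the paper sets $\kappa(x)=\cos x-\tfrac{\sin x}{x}$, checks $\kappa(0)=\kappa'(0)=0$ and $|\kappa''|\le 2$, and uses Taylor plus $|\kappa|\le 2$ to obtain $|\kappa(x)|\le\min\{x^2,2\}\le\tfrac{4x^2}{2+x^2}$; you instead integrate $\tfrac{d}{dy}\bigl(y\cos y-\sin y\bigr)=-y\sin y$ to get the (slightly sharper) cubic bound $|y\cos y-\sin y|\le y^3/3$ for all $y$, pair it with the crude bound $\le 2y$ for $y\ge 1$, and glue the two regimes with a case split at $y=\sqrt{10}$. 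Both yield the claimed $|j_0'(y)|\le\tfrac{4y}{2+y^2}$; your version avoids computing or bounding $\kappa''$ at the cost of a small numerical case distinction. Everything else (the exact equality for $h_0^{(1)}$, the exact integral $\tfrac{\omega h_j}{c_j}\bigl(1+\tfrac{c_j^2}{z_{j-1}z_j}\bigr)$ for $(h_0^{(1)})'$ followed by $z_{j-1}z_j\ge z_{j-1}^2$, the bound $|\sin u|\le\tfrac{2u}{1+u}$ with monotonicity for $j_0$, and the monotonicity of $u\mapsto\tfrac{u^2}{2+u^2}$ for $j_0'$) matches the paper's argument.
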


\begin{proof}
By using Lemma \ref{lem:hankelpiecewise}, we have%
\begin{align*}
\left\Vert h_{0}^{(1)}\left(  \frac{\omega}{c_{j}}\cdot\right)  \mathfrak{g}%
_{0}\right\Vert _{L^{2}\left(  \tau_{j}\right)  }^{2}  &  =\int_{x_{j-1}%
}^{x_{j}}\left(  \frac{\omega}{c_{j}}r\right)  ^{-2}r^{2}dr=\left(
\frac{c_{j}}{\omega}\right)  ^{2}h_{j},\\
\left\Vert \left(  h_{0}^{(1)}\right)  ^{\prime}\left(  \frac{\omega}{c_{j}%
}\cdot\right)  \mathfrak{g}_{0}\right\Vert _{L^{2}\left(  \tau_{j}\right)
}^{2}  &  =\int_{x_{j-1}}^{x_{j}}\left\vert h_{0}^{\prime}\left(  \frac
{\omega}{c_{j}}r\right)  \right\vert ^{2}r^{2}dr\leq\left(  \frac{c_{j}%
}{\omega}\right)  ^{2}\left(  1+\left(  \frac{c_{j}}{z_{j-1}}\right)
^{2}\right)  h_{j}.
\end{align*}
For the spherical Bessel functions we get
\begin{align*}
\left\Vert j_{0}\left(  \frac{\omega}{c_{j}}\cdot\right)  \mathfrak{g}%
_{0}\right\Vert _{L^{2}\left(  \tau_{j}\right)  }^{2}  &  =\int_{x_{j-1}%
}^{x_{j}}\left(  \frac{\sin\left(  \frac{\omega}{c_{j}}r\right)  }{\left(
\frac{\omega}{c_{j}}r\right)  }\right)  ^{2}r^{2}dr=\left(  \frac{c_{j}%
}{\omega}\right)  ^{3}\int_{z_{j-1}/c_{j}}^{z_{j}/c_{j}}\left(  \sin y\right)
^{2}dy\\
&  \leq4\left(  \frac{c_{j}}{\omega}\right)  ^{3}\int_{z_{j-1}/c_{j}}%
^{z_{j}/c_{j}}\frac{y^{2}}{\left(  1+y\right)  ^{2}}dy\leq\left(  2\frac
{c_{j}}{\omega}\frac{z_{j}}{c_{j}+z_{j}}\right)  ^{2}h_{j}\\
\left\Vert j_{0}^{\prime}\left(  \frac{\omega}{c_{j}}\cdot\right)
\mathfrak{g}_{0}\right\Vert _{L^{2}\left(  \tau_{j}\right)  }^{2}  &  =\left(
\frac{c_{j}}{\omega}\right)  ^{3}\int_{z_{j-1}/c_{j}}^{z_{j}/c_{j}}\left(
\cos x-\frac{\sin x}{x}\right)  ^{2}dx.
\end{align*}
We set $\kappa\left(  x\right)  :=\cos x-\frac{\sin x}{x}$ and a
straightforward calculation leads to $\left\vert \kappa^{\prime\prime}\left(
x\right)  \right\vert \leq2$. This together with the trivial estimate $\left\vert \kappa\left(
x\right)  \right\vert \leq2$ leads to%
\[
\left\vert \kappa\left(  x\right)  \right\vert \leq\min\left\{  x^{2}%
,2\right\}  \leq\frac{4x^{2}}{x^{2}+2}.
\]
Hence,%
\[
\left\Vert j_{0}^{\prime}\left(  \frac{\omega}{c_{j}}\cdot\right)
\mathfrak{g}_{0}\right\Vert _{L^{2}\left(  \tau_{j}\right)  }^{2}\leq
\frac{16z_{j}^{4}}{\left(  2c_{j}^{2}+z_{j}^{2}\right)  ^{2}}\left(
\frac{c_{j}}{\omega}\right)  ^{2}h_{j}.
\]
The first three pointwise estimates follow from \eqref{modh0j0} and
\eqref{modh0j02}. For the last one, we estimate%
\[
\left\vert j_{0}^{\prime}\left(  y\right)  \right\vert =\frac{1}{y}\left\vert
\kappa\left(  y\right)  \right\vert \leq\frac{4y}{2+y^{2}}.
\]%
\end{proof}

\bibliographystyle{abbrv}
\bibliography{multiDHelmholtz}

\end{document}